\documentclass[12pt]{article}
\usepackage{amsthm}
\usepackage{amssymb,amsmath, txfonts}
\usepackage{graphicx}
\usepackage{float}
\usepackage{caption}
\usepackage{subcaption}
\usepackage{cite}
\usepackage{titlesec}
\usepackage{xcolor}
\begin{document}
	
	\def\abstractname{\bf Abstract}
	\def\dfrac{\displaystyle\frac}
	\let\oldsection\section
	\renewcommand\section{\setcounter{equation}{0}\oldsection}
	\renewcommand\thesection{\arabic{section}}
	\renewcommand\theequation{\thesection.\arabic{equation}}
	\newtheorem{theorem}{\indent Theorem}[section]
	\newtheorem{lemma}{\indent Lemma}[section]
	\newtheorem{proposition}[theorem]{Proposition}
	\newtheorem{definition}{\indent Definition}[section]
	\newtheorem{remark}{\indent Remark}[section]
	\newtheorem{corollary}{\indent Corollary}[section]
	\def\pd#1#2{\displaystyle\frac{\partial#1}{\partial#2}}
	\def\d#1{\displaystyle\frac{d#1}{dt}}
	
	\title{\LARGE\bf Self-similar Dynamics in the Critical $p$-Laplacian Patlak-Keller-Segel Model: Shrinking Blow-up and Expanding Propagation
		\author{Chunhua Jin, \quad Fengqing Zhang\thanks{
				Corresponding author. Email:  {\tt zhangfengqingmath@163.com}}
		\\
			\small (\it{School of Mathematical Sciences, South China
				Normal University, }
			\\
			\small \it{Guangzhou, 510631, China}\rm)
	}}
	
	\date{}
	
	\maketitle

\begin{abstract}
In this paper, we study the following Patlak-Keller-Segel model with $p$-Laplacian diffusion
\begin{align*}
			\left\{
			\begin{aligned}
				&\rho _t=\nabla \cdot \left( \left| \nabla \rho \right|^{p-2}\nabla \rho \right) -\chi \nabla \cdot \left( \rho \nabla c \right),
				\\
				&0=\varDelta c+\rho ^m,
			\end{aligned}\right.
		\end{align*}
and the exponent $m>0$ is chosen as
$$
m = \frac{(p-2)N + p}{N}.
$$
This relation ensures the scale invariance of the system and is conjectured to be the critical exponent that separates global boundedness from finite-time blow-up. We prove that, at the critical threshold $m=\frac{(p-2)N + p}{N}$, the system indeed admits finite-time blow-up solutions. More precisely, in the slow diffusion regime $p>2$, there exist backward self-similar blow-up solutions that are radially decreasing, compactly supported, and concentrate into a Dirac $\delta$-measure at the blow-up time $T$; and their supports shrink toward the origin at the rate $(T-t)^{\frac1{mN}}$. For the fast diffusion case $1<p\le 2$, we show that there are no backward self-similar blow-up solutions with finite-mass. Additionally, we also explore  forward self-similar solutions in both the slow diffusion and fast diffusion cases. These solutions also carry finite mass and exhibit a Dirac $\delta$-singularity at the initial moment. Specifically, in the slow diffusion case, the support expands at the rate $t^{\frac1{mN}}$, whereas in the fast diffusion case, the solution becomes strictly positive for all positive times.

Our work provides the first blow up analysis for the $p$-Laplacian Keller-Segel system when $p\ne 2$, and it confirms that the exponent $m$ given above is indeed the sharp threshold between global existence and finite time singularity formation.
\end{abstract}
	
	{\bf Keywords}: Critical $p$-Laplacian Keller-Segel Model,  Self-similar singular solution, Shrinking support, Expanding support
	
\section{Introduction}

Since the 1970s, the Patlak-Keller-Segel system has served as a fundamental model for describing chemotaxis-driven aggregation in biological systems, such as cellular slime molds. In its classical form, this system is presented as
\begin{align}
		\label{1-1}\left\{
		\begin{aligned}
			&	\rho _t=\Delta \rho -\chi \nabla \cdot \left( \rho \nabla c \right) ,  &&
			\\
			&	\tau c_t=\Delta c -\mu c +\rho,
		\end{aligned}\right.
	\end{align}
where $\tau\in \{0, 1\}$, $\rho$ denotes cell density and $c$ denotes the chemoattractant concentration.
When $\tau = 0$, the system simplifies to a parabolic-elliptic structure, it corresponds to the assumption that the chemical substance diffuses much faster than the species itself \cite{13}, which can also be regarded as a model of self-attracting particles \cite{BH, Wo}.
Over the past several decades, extensive research has been conducted on the dynamic behaviors exhibited by the Keller-Segel system, including global existence, blow-up, and critical mass phenomena etc. \cite{4, 16, 18, 29, 30, 32, 34, 36, 42}. Notably, in two dimensions, a critical mass
$M^*$ exists such that solutions exist globally if the initial mass is below $M^*$, and may blow up in finite time if the mass exceeds $M^*$. While in higher dimensions $N \geq 3$, there is no critical mass phenomena, and for any small initial mass, there may exists solution blow up at a finite time.
On the other hand, the profile of blow-up solutions has also been extensively studied, with many works focusing on type I and type II blow-up solutions. Generally speaking, we say that the blow-up is of type I if
	$$\underset{t\rightarrow T}{\limsup}\,\,(T-t)\vert\vert\rho(\cdot,t)\vert\vert_{L^{\infty}(\Omega)}<\infty.$$
    Otherwise, the blow-up is of type II. It is remarkable that any blow-up solutions are
of type II for $N=2$, see \cite{21, 24, 25}, and both type I and type II blow-up solutions have been constructed in higher dimensions $N \geq 3$ \cite{6, 8, 12, 14, 15, 27, 28, 33, 35}.

In particular, cell migration more closely resembles motion in a porous medium, exhibiting nonlinear diffusion characteristics, that is, the migration rate depends on the cell density or its gradient. Therefore, chemotaxis models that incorporate nonlinear diffusion features, such as Newtonian flow ($\Delta\rho^m$) or non-Newtonian flow ($\nabla \cdot(| \nabla \rho|^{p-2}\nabla \rho)$), have become a new research focus. When the cell migration rate depends on density, the following porous medium-type diffusion model is often considered,
	\begin{align}
		\label{1-3}\left\{
		\begin{aligned}
			&		\rho _t=\Delta \rho ^m-\chi \nabla \cdot \left( \rho \nabla c \right),
			\\
			&	\tau c_t=\Delta c-\mu c+\rho .
		\end{aligned}\right.
	\end{align}
 It is well-established that a critical exponent $m_N =\frac{2(N-1)}{N}$ ($N\ge 3$) exists, which sharply distinguishes between global existence and finite-time blow-up of solutions. Specifically,

$\bullet$ When $m> m_N$, global solvability holds without any size restrictions on initial data for  $\tau \in \{0,1\}$ and  $\mu = 1$ \cite{40}.

$\bullet$ For  $m \leq m_N$, solutions may exhibit blow-up under certain conditions, e.g., finite-time blow-up occurs for  $1 < m < m_N$  with negative initial free energy \cite{1}, or for  $1 < m \leq m_N$  under certain initial data and specific parameter ranges \cite{7, 41}.

$\bullet$ At the critical threshold  $m = m_N$ , a critical mass phenomenon arises \cite{3}, and finite-mass self-similar blow-up solutions exist \cite{5} for  $N \geq 3$.

However, when migration is driven by density gradients,  the following $p$-Laplacian diffusion model emerges naturally,
\begin{align}
		\label{1-4}\left\{
		\begin{aligned}
			&\rho _t=\nabla \cdot \left( \left| \nabla \rho \right|^{p-2}\nabla \rho \right) -\chi \nabla \cdot \left( \rho \nabla c \right), &&
			\\
			&\tau c_t=\Delta c-\mu c+\rho . &&
		\end{aligned}\right.
	\end{align}	
Existing studies have revealed the significant role of $p$ in the behavior of solutions:

$\bullet$  For $\tau =\mu =0$ \cite{9}, or $\tau =\mu =1$ \cite{23}, globally bounded weak solutions exist for  $1<p\le \frac{3N}{N+1}$ with small initial data, and for  $p > \frac{3N}{N+1}$  with arbitrary initial data.

 Despite these advancements, comparing with the system \eqref{1-3}, a fundamental gap remains: {\bf no blow-up theory has been developed for the case  $p \neq 2$. While global existence results are available, the critical exponent separating blow-up from global existence has not been identified for general $p$. This lack of a complete picture motivates the present work}.

In this paper, we focus on the following  Patlak-Keller-Segel (PKS) system with $p$-Laplacian diffusion:
\begin{align}
		\label{1-5}\left\{
		\begin{aligned}
			&\rho _t=\nabla \cdot \left( \left| \nabla \rho \right|^{p-2}\nabla \rho \right) -\chi \nabla \cdot \left( \rho \nabla c \right), &&
			\\
			&c(x,t)=(K*\rho^m)(x,t),&&
		\end{aligned}\text{in}\ \mathbb R^N\times (0,+\infty),\right.
	\end{align}
with $\chi >0$, $p>1$,
$$
K(x)=\left\{\begin{aligned}
&-\frac12|x|, && N=1;
\\
&-\frac1{2\pi}\ln|x|, &&N=2;
\\
&\frac{1}{(N-2)\omega_N|x|^{N-2}}, &&N\ge 3,
\end{aligned}\right.
$$
where $\omega_N$ represents the surface area of the unit sphere.
So that,
$$
\nabla c={\bar K}*\rho^m, \bar K=\nabla K=-\frac{x}{\omega_N|x|^{N}},
$$
with $\omega_1=2$, $\omega_2=2\pi$.
Here $c$ solves the  Poisson equation
$$-\Delta c=\rho^m, \ \text{ in $\mathbb R^N$}.
$$
We specifically choose
$$m=\frac{(p-2)N+p}{N}>0,\quad \left(\Leftrightarrow\quad p=\frac{(m+2)N}{N+1}\right).
$$
The selection of this particular exponent $m$ is fundamental. It is hypothesized to be the critical exponent for this system, meaning it represents the threshold between global boundedness and finite-time blow-up. This conjecture is supported based on the following evidence:

$\bullet$ In the classical case $p=2$, the critical exponent is known to be $m=\frac{2}{N}$. In fact, Liu, Winkler et al. \cite{22, 43} showed that for a related model, the solutions blow up in finite time if $m>\frac{2}{N}$ and remain globally bounded if $m<\frac{2}{N}$.

$\bullet$ The works of Cong et al.\cite{9, 23}  suggest that global solutions exist when $m<\frac{(p-2)N + p}{N}$ (e.g., with $ m = 1 $), indirectly supporting $m = \frac{(p-2)N + p}{N}$ as the critical threshold.

$\bullet$ Mathematically, the system exhibits scale invariance if and only if $m$ takes this specific value. Scale invariance is a common feature in problems exhibiting critical phenomena, often associated with the existence of a sharp threshold for blow-up.

From the above results, we observe that the previous studies on Keller-Segel system with $p$-Laplacian diffusion have primarily focused on global existence, while the blow-up theory remains open.
The purpose of this paper is to prove that in the critical case   $m=\frac{(p-2)N + p}{N}$, the system \eqref{1-5} can actually generate solutions that blow up in finite time. Specifically, in the slow-diffusion case $p>2$, we establish the existence of backward self-similar blow-up solutions that are radially monotone and compactly supported. Such solutions concentrate into a Dirac $\delta$-singularity at the blow-up time, and their supports shrink toward the origin at the rate $(T-t)^{\frac1{mN}}$. While for the fast-diffusion case with $1<p\le 2$, there is not self-similar blow-up solutions with finite mass.  This offers the first blow-up analysis for the case $p\not=2$, and thereby confirms the conjectured role of $m$ as the critical exponent sharply separating global existence from finite-time blow-up.

Additional,  in the fourth section of the paper, we also investigate forward self-similar solutions. In both the fast  and slow diffusion regimes, we establish the existence of forward self-similar solutions with finite mass. All of these solutions exhibit a Dirac $\delta$-singularity at the initial time. In particular, for the slow diffusion case, the initial data concentrate at the origin, forming a Dirac $\delta$ measure,
and the support of the solution expands at the rate $t^{\frac{1}{mN}}$ as time increases. While, for the fast diffusion case, the solution becomes positive everywhere for any positive time.

At last, we also present a brief discussion on the sign structure of the chemical concentration in the parabolic-elliptic Keller-Segel model, examining its dimensional dependence and profound influence on aggregation dynamics, along with interpretations of negative concentrations in physical contexts.

\section{Preliminaries and main results: Backward Self-Similar Blow-up Solutions}

Noticing that the system exhibits scale invariance, we now proceed to construct backward self-similar blow-up solutions to the system \eqref{1-5}. To this end, we let
	$$\rho(x,t) =( T-t) ^{-\alpha}\phi ( r) , \quad c( x,t ) =( T-t ) ^{\gamma}\psi (r),$$
	where $r=( T-t ) ^{-\beta} |x|.$ By requiring the solution to be invariant under the similarity transformation, we obtain the scaling exponents
	\begin{align}\label{2-1}
		&\alpha =\frac{1}{m},\quad \beta=\frac{1}{mN},\quad \gamma = \frac{2-mN}{mN},
	\end{align}
with
	\begin{align}\label{2-2}
	m=\frac{( p-2 ) N+p}{N}>0\quad \left(\text{i.e.,} \ p=\frac{(m+2)N}{N+1}\right).
	\end{align}
A direct calculation shows that the mass of $\rho$ defined in this way is conserved and independent of time $t$, that is
	$$
	\int_{R^N}{\rho(x,t)}dx=\int_{R^N}{( T-t ) ^{-\alpha}\phi ( ( T-t ) ^{-\beta}   |x|   )}dx=\omega_N\int_0^\infty r^{N-1}\phi(r)dr \equiv M,
	$$
where $\omega_N$ is the surface area of the unit sphere.
Therefore, the system \eqref{1-5} is transformed into
		\begin{align}
		\label{2-4}\left\{
		\begin{aligned}
			&\frac{1}{m}\phi +\frac{r}{mN}\phi ^{\prime}=\left( \left| \phi ^{\prime} \right|^{p-2}\phi ^{\prime} \right) ^{\prime}+\frac{N-1}{r}\left| \phi ^{\prime} \right|^{p-2}\phi ^{\prime}-\chi \left( \phi \psi ^{\prime} \right) ^{\prime}- \frac{\chi(N-1)}{r}\left( \phi \psi ^{\prime} \right)
			,
			\\
			&\psi ^{''}+\frac{N-1}{r}\psi ^{\prime}+\phi ^m=0.
		\end{aligned}\right.
	\end{align}
	From the first equation of \eqref{2-4}, we derive that
	 \begin{align}
	 	\label{2-5}
	 	\begin{aligned}
	 		& \frac{r}{mN}=\frac{\left| \phi ^{\prime} \right|^{p-2}\phi ^{\prime}}{\phi }-\chi \psi ^{\prime}.
	 	\end{aligned}
	 \end{align}
Substituting \eqref{2-5} into the second equation of \eqref{2-4} yields
	 \begin{align*}
	 	\begin{aligned}
	 		& \left( \frac{\left| \phi ^{\prime}\left( r \right) \right|^{p-2}\phi ^{\prime}\left( r \right)}{\phi \left( r \right)} \right) ^{\prime}+\frac{N-1}{r}\frac{\left| \phi ^{\prime}\left( r \right) \right|^{p-2}\phi ^{\prime}\left( r \right)}{\phi \left( r \right)}+\chi \phi ^m\left( r \right) -\frac{1}{m}
	 		=0.
	 	\end{aligned}
	 \end{align*}
Therefore, finding the radial solutions to system \eqref{2-4} reduces to solving the following equation
\begin{align}
\label{2-7}\left\{
\begin{aligned}
&\left( \frac{\left| \phi ^{\prime}\left( r \right) \right|^{p-2}\phi ^{\prime}\left( r \right)}{\phi \left( r \right)} \right) ^{\prime}+\frac{N-1}{r}\frac{\left| \phi ^{\prime}\left( r \right) \right|^{p-2}\phi ^{\prime}\left( r \right)}{\phi \left( r \right)}+\chi \phi ^m\left( r \right) -\frac{1}{m}=0,
\\
&\phi\left( 0 \right) =A, \phi^{\prime}\left( 0 \right) =0.
\end{aligned}\right.
\end{align}
Here, the initial value condition $\phi^{\prime}\left( 0 \right)=0$ arise from the radial symmetry of the solution, and  $A$  is a positive constant to be determined.
Our primary objective is to determine whether a global nonnegative solution $\phi(r)$ exists, satisfying one of the following conditions:

     (i) $\displaystyle\lim_{r\to\infty}\phi(r) = 0$, following the definition of \cite{39}, such a solution is referred to as a {\bf ground state solution}.

     (ii)A compactly supported radial solution fulfilling a homogeneous Dirichlet-Neumann free boundary condition, i.e.,
\begin{align}\label{eq2-6}
\left\{\begin{aligned}
    			&\nabla\cdot\left(\frac{|\nabla\phi|^{p-2}\nabla\phi}{\phi}\right)+ \chi\phi^m-\frac{1}{m}=0, &&  \text{in} \, B_R, \\
    			&	\phi>0\quad\text{in} \, B_R, \quad	\phi=\frac{\partial \phi}{\partial n}=0, && \text{on}\, \partial B_R,
    		\end{aligned}\right.
    	\end{align}
where $B_R$ is an open ball in $R^N$. Such a solution can be smoothly extended by zero to yield a globally defined nonnegative smooth solution.

Once either of the above two types of solutions is found, we then have
$$
\psi(\xi)=(K*\phi^m)(\xi),
$$
since $\phi(\xi)=\phi(|\xi|)$, therefore,
\begin{align}\label{radial-1}
\psi(r)=\left\{
\begin{aligned}
&-\int_0^r r\phi^m(s)ds-\int_r^{+\infty}s\phi^m(s)ds, &&  N=1,
\\
&-\ln r\int_0^r s\phi^m(s) ds-\int_r^\infty s\ln(s) \phi^m(s)ds, && N=2,
\\
&\frac{1}{(N-2)r^{N-2}}\int_0^r s^{N-1}\phi^m(s) ds+\frac{1}{N-2}\int_r^\infty s\phi^m(s)ds, && N\ge 3.
\end{aligned}\right.
\end{align}
To search for such solutions, we proceed with the following transformation.

When $p\ne 2$, we let
\begin{align}\label{3-1}
u\left( r \right) =\phi ^{\frac{p-2}{p-1}}\left( r \right),
\end{align}
and
\begin{align}\label{3-2}
q: = \frac{m(p-1)}{p-2}.
\end{align}

$\bullet$ For the slow diffusion case $p>2$, the problem \eqref{2-7} is transformed into
	\begin{align}\label{3-3}
		\left\{
		\begin{aligned}
			&\left( B\left| u^{\prime}\left( r \right) \right|^{p-2}u^{\prime}\left( r \right) \right) ^{\prime}+\frac{B\left( N-1 \right)}{r}\left| u^{\prime}\left( r \right) \right|^{p-2}u^{\prime}\left( r \right) +\chi |u|^{q-1}u-\frac{1}{m}=0
			,\\
			&	u\left( 0 \right) =a, u^{\prime}\left( 0 \right) =0,\\
		\end{aligned}\right.
	\end{align}
where $B=| \frac{p-1}{p-2} | ^{p-1}$ and $a=A^{\frac{p-2}{p-1}}.$ Observing that $m=\frac{(p-2)N+p}{N}$, this indicates that $q>p-1$ when $p>2$.
It is straightforward to see that  $u^{*} \equiv \left( \frac{1}{\chi m} \right) ^{\frac{1}{q}}$ is the equilibrium point of the problem \eqref{3-3}.
For convenience in the subsequent proof, we allow the solution to be negative; hence, we replac $u^{q}$ with $ | u|^{q-1}u$.

We aim to find a ground state (i.e., a positive radial solution such that  $\displaystyle\lim_{r\to\infty}u(r)=0$), or
a compact-support radial solution satifying \eqref{eq2-6}. From \eqref{3-1}, we see that
\begin{equation}\label{eq2-11}
\phi'(R)=\frac{p-1}{p-2}\phi^{\frac1{p-1}}(R)u'(R)=\frac{p-1}{p-2}u^{\frac1{p-2}}(R)u'(R).
\end{equation}
Thus, when $u(R)=0$, it follows that $\phi'(R)=0$ if $u'(R)$ is finite.

The following proposition reveals that the problem \eqref{3-3} admits no ground state solution. Instead, only compactly supported solutions may exist, and their existence depends on the ranges of  of $p$ and $q$.

\begin{proposition}\label{pro-1}
Assume that $N >1$, $p > 2$, and $q > p - 1$. Then the following holds:
\begin{enumerate}
    \item[(i)] If $q\geq \frac{Np}{N - p} - 1$ with $2<p< N$, then for any given $a>0$, the problem \eqref{3-3} possesses a unique global positive solution $u(r) \in C^2(0, +\infty)$ such that
  $$\lim_{r \to \infty} u(r) = u^*.$$

    \item[(ii)] If either $q<\frac{Np}{N-p}-1$ with $2<p< N$, or $p\geq N$,
    then there exist constants $a_1$, $a_2$ with $a_2\geq a_1>u^*$, and $a_c\in [a_1, a_2]$ such that:
    \begin{itemize}
        \item For any $a\in (0, a_1)$, the problem \eqref{3-3} possesses a unique global positive solution $u(r) \in C^2(0, +\infty)$ with $\lim\limits_{r \to \infty} u(r)=u^*$.

        \item For $a = a_c$, the problem \eqref{3-3} admits a unique compact-support solution $u(r) \in C^2(0, R) \cap C^1[0, R]$ satisfying:
        $$
        u(r)>0 \ \text{ for }\ 0<r<R, \quad \text{and} \quad u(R)= u'(R)=0.
       $$

        \item For any $a\in (a_2, +\infty)$, the problem \eqref{3-3} admits a unique solution $u(r)\in C^2(0, R)\cap C^1[0, R]$ satisfying:
        $$
        u(r)>0 \ \text{ for }\ 0<r<R,\ \text{and} \ u(R) = 0,  u'(R)<0.
        $$
    \end{itemize}
    In particular, when $q<\frac{Np}{N-p}-1$ with $2<p< N$, it holds that $a_1=a_2=a_c$.
\end{enumerate}
\end{proposition}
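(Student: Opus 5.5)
Our plan is a shooting argument in the initial height $a$ for the radial quasilinear ODE \eqref{3-3}. We write it as $(r^{N-1}B|u'|^{p-2}u')'=r^{N-1}\bigl(\frac1m-\chi|u|^{q-1}u\bigr)$ and introduce the energy
\begin{align*}
E(r)=\frac{B(p-1)}{p}|u'|^p+\frac{\chi}{q+1}|u|^{q+1}-\frac{u}{m},
\end{align*}
which satisfies $E'(r)=-\frac{B(N-1)}{r}|u'|^p\le 0$. Local existence and uniqueness near $r=0$ follow from the integral form
\begin{align*}
u'(r)=-\varphi_p^{-1}\Bigl(\frac{1}{B r^{N-1}}\int_0^r s^{N-1}(\chi|u|^{q-1}u-\tfrac1m)\,ds\Bigr),\qquad \varphi_p(s)=|s|^{p-2}s,
\end{align*}
and a contraction argument. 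Because $p>2$, $\varphi_p^{-1}$ is Hölder rather than Lipschitz at $0$, so uniqueness must be argued where $u'=0$. We handle this as in the standard $p$-Laplacian theory: use monotonicity of the right-hand side away from the equilibrium, and use the fact that the forcing $\frac1m-\chi|u|^{q-1}u$ is nonzero unless $u=u^*$. If $a<u^*$, the forcing is positive, so $u$ first increases. If $a>u^*$, $u$ first decreases. The solution oscillates about $u^*$ while $E$ decreases, and we classify trajectories by whether $u$ ever hits $0$.

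We then define
\begin{align*}
I_+&=\{a>0:\ u(r;a)>0 \text{ for all } r\},\\
I_-&=\{a>0:\ u \text{ hits } 0 \text{ at some } R \text{ with } u'(R)<0\},\\
I_0&=\{a>0:\ u(R)=u'(R)=0 \text{ for some } R\}.
\end{align*}
These three sets partition $(0,\infty)$.

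For $a\in(0,u^*]$, we have $E(0)\le E(\text{level at }u^*)$-type bounds. Since $E$ is nonincreasing and $F(u)=\frac{\chi}{q+1}|u|^{q+1}-\frac um$ is convex with its minimum at $u^*$, trajectories stay trapped in $\{F(u)\le F(a)\}$. This set is a bounded interval inside $(0,\infty)$ whenever $F(a)<F(0)=0$. Hence $u>0$ for all $r$ whenever $F(a)<0$, and this gives $a_1>u^*$: the root $\bar a>u^*$ of $F(\bar a)=0$ yields positivity for $a<\bar a$. For global positive solutions we prove $u\to u^*$. The integral $\int_0^\infty \frac{N-1}{r}|u'|^p\,dr$ is finite, so $E$ has a limit. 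A LaSalle-type argument applied to the asymptotically autonomous equation, whose damping vanishes like $1/r$, then forces $u'\to0$ and $u\to u^*$; the only zero of the forcing is $u^*$. This yields (i) as well as the first bullet of (ii).

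For (i), in the supercritical regime $q\ge \frac{Np}{N-p}-1$, we must show that $u$ never reaches $0$ for any $a$. We would use a Pohozaev identity. We multiply by $r^N u'$ and by $r^{N-1}u$ and combine them with the weight $\frac{N-p}{p}$. This produces an identity in which, for $q+1\ge p^*=\frac{Np}{N-p}$, the boundary terms at a first zero $R$ carry a sign that contradicts $u(R)=0$. The linear term $-\frac um$ must be included; it contributes a term with a favorable sign, since it corresponds to a subcritical power with exponent $1<p^*$. Verifying that sign bookkeeping carefully is the main obstacle of part (i).

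For (ii), the subcritical case or $p\ge N$, we show that $I_-$ contains $(a_2,\infty)$ by blow-up rescaling. We set $u(r)=a\,v(a^{\kappa}r)$ with $\kappa=(q+1-p)/p$. As $a\to\infty$, $v$ converges in $C^1_{loc}$ to the solution of $(B r^{N-1}|v'|^{p-2}v')'+\chi r^{N-1}|v|^{q-1}v=0$ with $v(0)=1$. For subcritical $q$, this Lane–Emden type problem has a zero with nonzero slope. The constant term $\frac1m a^{-q}$ vanishes in the limit, so transversality persists for large $a$. By continuous dependence, $I_-$ is open, and $I_+$ is open as well, since the trapping argument bounds $u$ away from $0$ with a positive margin. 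Moreover $(0,a_1)\subset I_+$. Since $(0,\infty)$ is connected, $I_0$ is nonempty. We set $a_1=\sup\{a:(0,a)\subset I_+\}$ and $a_2=\inf\{a:(a,\infty)\subset I_-\}$, and take $a_c\in I_0\cap[a_1,a_2]$. Uniqueness of each solution for fixed $a$ is the same local uniqueness argument as before, continued up to $R$.

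The hardest step is the final claim $a_1=a_2$ for subcritical $q$, which amounts to $I_0$ being a single point. We would attempt a separation/monotonicity argument: show that two distinct solutions $u(\cdot;a)$ and $u(\cdot;b)$ with $a<b$ intersect in a controlled way. We would use the scaling $u_\lambda(r)=\lambda u(\lambda^{-\kappa'} r)$, which maps solutions to supersolutions or subsolutions because of the constant term. Combined with a Yanagida/Erbe–Tang type uniqueness argument for compactly supported $p$-Laplacian ground states, this should show that $a\mapsto$ (the first zero, or the sign of $u'(R)$) is monotone. We expect this uniqueness step to be the main technical obstacle of the whole proposition. The case $p\ge N$ must be treated separately, since the rescaled limit then always has a zero, which is why only $a_1\le a_2$ is asserted there.
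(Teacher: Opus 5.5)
Up to the nonemptiness of $I_0$ your plan is sound and follows the paper's route. You use the same energy, the same trichotomy $I_+,I_-,I_0$ (the paper's $\mathcal P,\mathcal N,\mathcal N_0$), the same blow-up rescaling (your $\kappa=(q+1-p)/p$ is the paper's $\lambda$), and the same connectedness argument. The sign bookkeeping you worry about in (i) does close. With $F(u)=\frac{\chi}{q+1}u^{q+1}-\frac{u}{m}$ and
\[
P(r)=r^N\Bigl[\tfrac{B(p-1)}{p}|u'|^p+F(u)\Bigr]+\tfrac{N-p}{p}\,B\,r^{N-1}|u'|^{p-2}u'\,u ,
\]
one gets $P'(r)=r^{N-1}\bigl[\chi\bigl(\frac{N}{q+1}-\frac{N-p}{p}\bigr)u^{q+1}-\frac{N(p-1)+p}{mp}\,u\bigr]<0$ while $u>0$ and $q+1\ge\frac{Np}{N-p}$. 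Hence $P(R)<P(0)=0$ at a first zero $R$, contradicting $P(R)=\frac{B(p-1)}{p}R^N|u'(R)|^p\ge0$. This proves (i) directly, where the paper only appeals to [39].

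Three small points still need writing out:
\begin{enumerate}
\item The convergence $u\to u^*$ needs more than LaSalle, because the limit system conserves $E$. You must use that a nonconstant periodic orbit in the $\omega$-limit set would make $\int^\infty r^{-1}|u'|^p\,dr$ diverge.
\item Openness of $I_+$ uses that this convergence gives a finite $r_0$ with $E(r_0;a)<0$; after that, trapping applies to all nearby $b$.
\item The $C^2(0,R)$ claim requires $u'<0$ on $(0,R)$ for $a\in I_-\cup I_0$. This holds because a first interior critical point $r^*$ would have $0<u(r^*)<u^*$, hence $E(R)\le E(r^*)<0$, impossible since $E(R)=\frac{B(p-1)}{p}|u'(R)|^p\ge0$.
\end{enumerate}

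The genuine gap is the final clause $a_1=a_2=a_c$ for $2<p<N$, $q<\frac{Np}{N-p}-1$. You correctly reduce it to $I_0$ being a single point: then the connected sets $(0,a_c)$ and $(a_c,\infty)$ are split by the open sets $I_\pm$ and contain $(0,a_1)\subset I_+$, resp.\ $(a_2,\infty)\subset I_-$. But you never establish that $I_0$ is a singleton. The scaling $u_\lambda$ only produces sub- or supersolutions, which impose no ordering on two shooting solutions (they can cross). Nothing in the sketch yields monotonicity in $a$ of the sign of $u'(R(a))$. Uniqueness of compactly supported radial ground states is a delicate theorem that requires a structural condition on the nonlinearity.

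The paper does not reprove it. It imports from [39] the uniqueness of the radial solution of the free boundary problem \eqref{3-10} (a Serrin--Tang type result, available for $1<p<N$). You should do the same:
\begin{itemize}
\item write the equation as $\Delta_p u+f(u)/B=0$ with $f(u)=\chi u^q-\frac1m$;
\item observe that, since $f(0)<0$ and solutions in $I_+$ tend to $u^*$, radial solutions of \eqref{3-10} correspond exactly to points of $I_0$;
\item check the theorem's hypothesis, e.g.\ that $uf'(u)/f(u)=q/\bigl(1-(u^*/u)^q\bigr)$ is nonincreasing on $(u^*,\infty)$.
\end{itemize}

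Relatedly, your closing remark on $p\ge N$ is misplaced. The rescaled Lane--Emden limit having a transversal zero is exactly what gives $(a_2,\infty)\subset I_-$ in that case. Only $a_1\le a_2$ is asserted there because no such uniqueness theorem is invoked for $p\ge N$.
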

In particular, when $N=1$, we have the following proposition.
 \begin{proposition}\label{pro-1-2}
Assume that $N=1$, $p > 2$, and $q > p - 1$. Then there exists a positive constant  $a_c=\left(\frac{q+1}{m\chi}\right)^{\frac1{q}}$ such that:
   \begin{itemize}
        \item For any $a\in (0, a_c)$, the problem \eqref{3-3} possesses a unique global positive oscilatory solution with constant amplitude around  the equilibrium point $u^*$.

        \item For $a = a_c$, the problem \eqref{3-3} admits a unique compact-support solution $u(r) \in C^2(0, R) \cap C^1[0, R]$ satisfying:
        $$
        u(r)>0 \ \text{ for }\ 0<r<R, \quad \text{and} \quad u(R)= u'(R)=0.
       $$

        \item For any $a\in (a_c, +\infty)$, the problem \eqref{3-3} admits a unique solution $u(r)\in C^2(0, R)\cap C^1[0, R]$ satisfying:
        $$
        u(r)>0 \ \text{ for }\ 0<r<R,\ \text{and} \ u(R) = 0,  u'(R)<0.
        $$
    \end{itemize}
   \end{proposition}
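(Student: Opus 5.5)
For $N=1$ the term $\frac{B(N-1)}{r}|u'|^{p-2}u'$ in \eqref{3-3} vanishes, so the equation becomes the autonomous conservative ODE
$$\big(B|u'|^{p-2}u'\big)'+\chi|u|^{q-1}u-\frac1m=0 .$$
The plan is to integrate it once to get an energy identity and read off everything from the phase plane. Multiplying by $u'$ and using $\frac{d}{dr}\big(\frac{p-1}{p}B|u'|^p\big)=u'(B|u'|^{p-2}u')'$, we obtain the conserved quantity
$$E(r)=\frac{(p-1)B}{p}|u'(r)|^p+F(u(r)),\qquad F(u)=\frac{\chi}{q+1}|u|^{q+1}-\frac{u}{m}.$$
The initial data $u(0)=a$, $u'(0)=0$ give
$$\frac{(p-1)B}{p}|u'|^p=F(a)-F(u).$$
Since $q>p-1>1$, $F$ is strictly convex on $u\ge0$ with unique minimum at $u^*$. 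Moreover $F(0)=0$, and $F(a)=0$ for $a>0$ exactly when $a^q=\frac{q+1}{\chi m}$, which is the stated $a_c$. Also $F(a)<0$ iff $0<a<a_c$. Local existence and uniqueness near $r=0$ (and away from points where $u'=0$) follow by writing the problem as the first-order system for $(u,w)$ with $w=B|u'|^{p-2}u'$; uniqueness through zeros of $u'$ will come from the energy relation, since $F(a)\ne F(u)$ there except at turning points.

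\textbf{Case $0<a<a_c$, $a\neq u^*$.}
\begin{itemize}
\item The sublevel set $\{F\le F(a)\}\cap\{u\ge0\}$ is a compact interval $[a_-,a_+]$ with $0<a_-<u^*<a_+<a_c$, and $a$ is one of its endpoints.
\item The orbit in the $(u,u')$-plane is the closed level curve $E=F(a)$, traversed periodically. The period is
$$\mathcal{P}=2\int_{a_-}^{a_+}\Big(\tfrac{(p-1)B}{p}\Big)^{1/p}\big(F(a)-F(s)\big)^{-1/p}\,ds,$$
which is finite because $F'(a_\pm)\ne0$ makes the integrand behave like $|s-a_\pm|^{-1/p}$ with $1/p<1$.
\item Hence $u$ is global, positive, and oscillates between $a_-$ and $a_+$ with constant amplitude around $u^*$.
\item Uniqueness follows by piecing together monotone branches, on each of which $u$ is given uniquely by inverting $r=\int (\cdots)^{-1/p}$.
\end{itemize}
(The degenerate case $a=u^*$ is the constant equilibrium, trivially included.)

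\textbf{Case $a=a_c$.} Here $F(a)=0$, so $u$ decreases monotonically on the branch $u'=-\big(\frac{p}{(p-1)B}(-F(u))\big)^{1/p}$ and reaches $u=0$ at
$$R=\int_0^{a_c}\Big(\tfrac{(p-1)B}{p}\Big)^{1/p}(-F(s))^{-1/p}\,ds<\infty .$$
Near $s=0$ we have $-F(s)\sim s/m$, so the integrand is $\sim s^{-1/p}$, which is integrable; near $s=a_c$, $F'(a_c)\ne0$ again gives integrability. At $r=R$, $u'(R)=0$ because $F(0)=0=F(a_c)$. Regularity $C^1[0,R]$ follows from the formula for $u'$.

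\textbf{Case $a>a_c$.} Now $F(a)>0=F(0)$, so $u$ decreases and hits zero at a finite $R$ with $|u'(R)|^p=\frac{p}{(p-1)B}F(a)>0$, i.e.\ $u'(R)<0$.

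\textbf{Main obstacle.} The only delicate point is justifying uniqueness and $C^2$ regularity near turning points and near $r=0$, where $u'=0$ and the $p$-Laplacian degenerates (recall $p>2$). I would handle this through the first-order system in $(u,w)$: near such points $w'=\frac1m-\chi|u|^{q-1}u$ is nonzero (because $a\ne u^*$), so $w$ is strictly monotone. Thus $u'=\Phi^{-1}(w/B)$ is uniquely determined by integrating, and $u$ is $C^2$ wherever $u'\ne0$ and $C^1$ across the degenerate points.
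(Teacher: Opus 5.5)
Your proposal is correct, and it reaches the statement by a genuinely different route from the paper.

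The paper does not treat $N=1$ on its own. It runs the general shooting scheme of Section 3:
the sets $\mathcal{P},\mathcal{N},\mathcal{N}_0$;
$(0,u^*]\subset\mathcal{P}$ from negativity of $E$;
$(a_2,\infty)\subset\mathcal{N}$ via the rescaling $\tilde u_j(r)=a_j^{-1}u_j(r/a_j^{\lambda})$ and the limit problem \eqref{3-18};
openness of $\mathcal{P}$ and $\mathcal{N}$;
and connectedness of $(0,\infty)$ to get $\mathcal{N}_0\neq\emptyset$.
It obtains oscillation about $u^*$ from Lemma \ref{le3.4}, through the oscillation criterion of Lemma \ref{le3.3} (applicable because $p\ge N=1$). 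Only in Lemma \ref{le3.8} does it use conservation of $E$ for $N=1$, to conclude constant amplitude and to identify $a_c$ as the positive root of $E(0,a)=0$.

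You instead use from the start that the $N=1$ equation is autonomous and conservative, so every solution lies on a level curve of $E$. Strict convexity of $F$ together with $F(0)=0$ then sorts all $a>0$ at once into the regimes $F(a)<0$, $F(a)=0$, $F(a)>0$. This buys several things:
a self-contained argument with no oscillation theory or compactness;
explicit information, namely the turning values $a_\pm$, the finite period, and $R(a)=\int_0^{a}\bigl(\frac{(p-1)B}{p}\bigr)^{1/p}(F(a)-F(s))^{-1/p}\,ds$;
and a careful treatment of uniqueness at the degenerate points $u'=0$, which the paper leaves to the omitted fixed-point argument of Lemma \ref{le3.1}.
Your remark that $w=B|u'|^{p-2}u'$ satisfies $w'=\frac1m-\chi|u|^{q-1}u\neq0$ at such points is exactly what excludes spurious solutions that linger at a turning value; the first integral alone cannot do this. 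The paper's route, in turn, buys uniformity with $N\ge2$, where the term $\frac{B(N-1)}{r}|u'|^{p-2}u'$ destroys the first integral and no phase-plane reading is available.

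Two small points should be stated explicitly. For $a>a_c$, strict convexity and $F(0)=0<F(a)$ give $F(u)<F(a)$ on $[0,a)$, so $u'$ cannot vanish before $u$ reaches $0$. For $0<a<a_c$, $F\ge0>F(a)$ on $(-\infty,0]$, so the whole level curve lies in $\{u>0\}$. For $a=u^*$, uniqueness of the constant solution follows from $E\equiv\min F$.
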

The above propositions suggest that

\begin{proposition}\label{pro-1-1}
When $p>2$. The problem \eqref{3-3} admits no positive radial ground state for any $a>0$; and the homogeneous Dirichlet-Neumann free boundary problem
  \begin{align}\label{3-10}
\left\{
    		\begin{aligned}
    			&B\Delta _pu+ \chi|u|^{q-1}u-\frac{1}{m}=0, &&  \text{in} \, B_R, \\
    			&	u>0\quad\text{in} \, B_R, \quad	u=\frac{\partial u}{\partial n}=0, && \text{on}\, \partial B_R
    		\end{aligned}\right.
    	\end{align}
admits a positive radial solution when $p-1<q< \frac{Np}{(N-p)_+}-1$.
\end{proposition}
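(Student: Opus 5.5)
The statement is essentially a corollary of Propositions \ref{pro-1} and \ref{pro-1-2}, and I would prove it by reading off the classification given there. It has two parts: the nonexistence of a positive radial ground state for every $a>0$, and the existence of a solution of the free boundary problem \eqref{3-10} when $p-1<q<\frac{Np}{(N-p)_+}-1$.

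\emph{Nonexistence of ground states.} Fix $a>0$ and let $u$ be the unique local solution of \eqref{3-3}. Propositions \ref{pro-1} and \ref{pro-1-2} place every $a>0$ into one of the following alternatives.
\begin{enumerate}
\item[(a)] $u$ is global and positive with $u(r)\to u^*>0$.
\item[(b)] $u$ is a global oscillatory solution around $u^*$ with constant amplitude (the case $N=1$).
\item[(c)] $u$ reaches zero at a finite $R$.
\end{enumerate}
In cases (a) and (b), $u(r)\not\to0$. In case (c), $u$ is not a positive ground state on $(0,\infty)$.

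The only gap is the range $a\in[a_1,a_2]\setminus\{a_c\}$ in Proposition \ref{pro-1}(ii) when $p\ge N$, where the proposition does not describe $u$. I would close it with a direct argument that does not use the proposition. Suppose $u>0$ on $(0,\infty)$ with $u\to0$. Write \eqref{3-3} as
\[
\big(r^{N-1}B|u'|^{p-2}u'\big)'=r^{N-1}\Big(\tfrac1m-\chi u^q\Big).
\]
For $r\ge r_0$ we have $u^q<\frac1{2\chi m}$, and integrating gives
\[
r^{N-1}B|u'|^{p-2}u'\ge C_0+\frac{r^N-r_0^N}{2mN}.
\]
Hence $u'\ge c\,r^{1/(p-1)}>0$ for large $r$. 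Then $u\to+\infty$, which contradicts $u\to0$. This argument works for every $N\ge1$ and every $a>0$, so I would actually use it as the main proof of the nonexistence part. Propositions \ref{pro-1} and \ref{pro-1-2} then serve only as illustration.

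\emph{Existence for the free boundary problem.} First I would check that the hypothesis on $q$ matches the cases of the earlier propositions.
\begin{enumerate}
\item[(a)] If $N>p$, the condition $q<\frac{Np}{N-p}-1$ is exactly the first case of Proposition \ref{pro-1}(ii).
\item[(b)] If $N\le p$, the condition reads $q<\infty$. This is covered by the case $p\ge N$ of Proposition \ref{pro-1}(ii) when $N>1$, and by Proposition \ref{pro-1-2} when $N=1$.
\end{enumerate}
In every case there is $a_c$ such that the solution $u$ of \eqref{3-3} with $u(0)=a_c$ satisfies $u>0$ on $[0,R)$ and $u(R)=u'(R)=0$.

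Next, the function $U(x)=u(|x|)$ is positive in $B_R$, and we have $U=\partial_nU=u'(R)=0$ on $\partial B_R$. The radial form of the equation in \eqref{3-3} is exactly $B\Delta_pU+\chi|U|^{q-1}U-\frac1m=0$ away from the origin.

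The main obstacle, which is mild, is regularity at $r=0$. We need $U$ to be a (weak) solution across the origin, i.e.\ $U\in C^1(B_R)$. Since $u'(0)=0$, integrating the ODE gives
\[
r^{N-1}B|u'|^{p-2}u'=\int_0^r s^{N-1}\Big(\tfrac1m-\chi u^q\Big)\,ds,
\]
so $|u'(r)|=O\big(r^{1/(p-1)}\big)$ as $r\to0$. Therefore $\nabla U$ is continuous at $0$, and the divergence identity holds in the weak sense. This proves the existence part of Proposition \ref{pro-1-1}.
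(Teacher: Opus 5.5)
Your proposal is correct. For the existence part you follow the paper's route. The compactly supported solution is the one with $u(0)=a_c\in\mathcal N_0$, and the paper obtains it in Lemmas \ref{le3.5}--\ref{le3.8} by showing that $\mathcal P$ and $\mathcal N$ are disjoint nonempty open subsets of $(0,\infty)$. Your case split ($N>p$; $N\le p$ with $N>1$; $N=1$) correctly matches the condition $q<\frac{Np}{(N-p)_+}-1$ to Propositions \ref{pro-1}(ii) and \ref{pro-1-2}. Your check that $r^{N-1}|u'|^{p-1}=O(r^N)$ near $r=0$, so that $U(x)=u(|x|)$ is a weak solution across the origin, is a detail the paper leaves implicit.

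For the nonexistence of ground states your route is genuinely different. The paper reads nonexistence off the classification in Lemma \ref{le3.8}:
\begin{itemize}
\item every $a>0$ lies in $\mathcal P$, $\mathcal N$ or $\mathcal N_0$;
\item solutions in $\mathcal N\cup\mathcal N_0$ vanish at a finite radius;
\item positive global solutions either tend to $u^*$ when $N\ge2$ (argued informally from the decay of the energy $E$) or oscillate with constant amplitude when $N=1$;
\item in the range $q\ge\frac{Np}{N-p}-1$ it appeals to \cite{39}.
\end{itemize}
You instead observe that $u\to0$ forces $(r^{N-1}B|u'|^{p-2}u')'\ge \frac{r^{N-1}}{2m}$ for large $r$. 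Hence $u'\ge c\,r^{1/(p-1)}$ and $u\to+\infty$, a contradiction. This uses only the positivity of the constant term $\frac1m$. It is independent of $q$, $N$ and the energy functional. It also covers the values $a\in[a_1,a_2]\setminus\{a_c\}$ that Proposition \ref{pro-1}(ii) leaves undescribed, and it avoids the paper's rather heuristic claim that oscillating solutions converge to $u^*$. What the paper's route gives in exchange is the full asymptotic picture of the non-vanishing solutions, which your argument does not provide but does not need.
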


\begin{remark}\label{re-1}
{\bf When $u(R)=0$, from \eqref{eq2-11}, it follows that $\phi'(R)=0$ since $u'(R)$ is finite}.
Consequently, when $p>2$ and $p-1<q< \frac{Np}{(N-p)_+}-1$, for any $a>a_2$ or $a=a_c$,
The solution of \eqref{2-7} satisfies the homogeneous Dirichlet-Neumann free boundary condition $\phi(R)=\phi'(R)=0$ for some $R>0$.
\end{remark}

$\bullet$ For the fast diffusion case $1<p<2$, \eqref{2-7} is transformed into
	\begin{align}
		\label{3-4}\left\{
		\begin{aligned}
			&\left( B\left| u^{\prime}\left( r \right) \right|^{p-2}u^{\prime}\left( r \right) \right) ^{\prime}+\frac{B\left( N-1 \right)}{r}\left| u^{\prime}\left( r \right) \right|^{p-2}u^{\prime}\left( r \right) -\chi |u|^{q-1}u+\frac{1}{m}=0
			,\\
			&	u\left( 0 \right) =a, u^{\prime}\left( 0 \right) =0,\\
		\end{aligned}\right.
	\end{align}
	where $q<0$, $B=| \frac{p-1}{p-2} |^{p-1}$ and $a=A^{\frac{p-2}{p-1}}$.

$\bullet$ For the linear diffuison case $p=2$, we let
\begin{align}\label{3-5}
u\left( r \right) =\ln \phi,
\end{align}
and \eqref{2-7} is transformed into
    	\begin{align}
    		\label{3-6}\left\{
    		\begin{aligned}
    			&u''\left( r \right) +\frac{N-1}{r}u^{\prime}\left( r \right) +\chi e^{mu}-\frac{1}{m}=0
    			,\\
    			&	u\left( 0 \right) = b,  u^{\prime}\left( 0 \right) =0,
    			\\
    		\end{aligned}\right.
    	\end{align}
    where $b = \ln A$. Clearly, $u_*= \frac{1}{m}\ln \frac{1}{\chi m}$ is a stationary solution of the problem \eqref{3-6}.

For the fast diffusion case $1<p\le 2$, it is straightforward to observe that $\phi\to 0$ is equivalent to $u\to \infty$. In fact, the problem \eqref{2-7} does not admit any solution for which  $\phi \to 0$ at a finite point or at infinity.

\begin{proposition}\label{pro-2}
	Assume that $N\geq1$, \eqref{2-2} holds.

 (i)If $1<p<2$, the problem \eqref{2-7}  admits neither a ground state solution nor a solution that vanishes at a finite point. In fact, if the problem \eqref{2-7} admits a global solution, then the solution must have a positive lower bound.

 (ii)If $p=2$, the problem \eqref{2-7} admits a global positive solution $\phi(r) \in C^1[0,\infty)$, which oscillates  around $(\frac{1}{\chi m})^{\frac{1}{m}}$.
 In particular,  $\displaystyle\lim_{r\to\infty}\phi(r)=(\frac{1}{\chi m})^{\frac{1}{m}}$ if $N>1$; and the amplitude is constant when $N=1$, which means that $\displaystyle\lim_{r\to\infty}\phi(r)$ does not exist.
\end{proposition}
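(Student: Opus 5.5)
The plan is to work with the transformed equations \eqref{3-4} (for $1<p<2$) and \eqref{3-6} (for $p=2$). In both, $\phi\to0$ corresponds to $u\to+\infty$: for $1<p<2$ we have $u=\phi^{\frac{p-2}{p-1}}$ with a negative exponent, and for $p=2$ we have $u=\ln\phi\to-\infty$. So (i) reduces to showing that $u$ stays bounded above, and (ii) to showing boundedness together with oscillation.

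For (i), write the equation in divergence form,
\begin{equation*}
\bigl(r^{N-1}B|u'|^{p-2}u'\bigr)'=r^{N-1}\Bigl(\chi |u|^{q-1}u-\frac1m\Bigr).
\end{equation*}
Since $u>0$ and $q<0$, the term $\chi u^{q}$ is decreasing in $u$, and there is an equilibrium $u^*=(\chi m)^{-1/q}$. I would argue by contradiction.
\begin{itemize}
\item Suppose $u$ becomes large, with $u>u^*+\delta$ on some interval. Then the right-hand side is at most $-c\,r^{N-1}$ there.
\item Integrating from a point where $u'\le 0$ (for instance a local maximum, or $r=0$), we get $r^{N-1}|u'|^{p-2}u'\le -c'(r^N-r_0^N)$, so $u'<0$ beyond that point and $u$ decreases.
\item Hence $u$ can never increase once it is above $u^*$. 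Starting from a maximum, it stays below $\max\{a,u^*\}+$ whatever it gains while below $u^*$, which is bounded.
\end{itemize}
More cleanly, I would use the energy
\begin{equation*}
E(r)=\frac{(p-1)B}{p}|u'|^{p}+F(u),\qquad F(u)=\frac{u}{m}-\frac{\chi u^{q+1}}{q+1}.
\end{equation*}
It satisfies $E'=-\frac{(N-1)B}{r}|u'|^p\le0$, so $F(u(r))\le F(a)$. When $-1<q<0$, $F$ tends to $+\infty$ as $u\to\infty$ (the linear term dominates), which gives an upper bound on $u$ and hence a positive lower bound on $\phi=u^{\frac{p-1}{p-2}}$. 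When $q\le -1$, the same conclusion holds because $F(u)\ge u/m-C$ for $u$ large.

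To exclude vanishing at a finite point we must also rule out $u\to\infty$ at a finite $R$, and the energy bound covers this as well. Finally, the map $s\mapsto s^{\frac{p-1}{p-2}}$ is decreasing, so an upper bound on $u$ yields $\phi\ge c>0$. This settles (i).

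For (ii), with $p=2$, I would use the energy
\begin{equation*}
E=\tfrac12u'^2+G(u),\qquad G(u)=\frac{\chi}{m}e^{mu}-\frac{u}{m},
\end{equation*}
which satisfies $E'=-\frac{N-1}{r}u'^2$. $G$ is coercive with its unique minimum at $u_*$, so $u$ is bounded and global, which gives $\phi\in C^1[0,\infty)$ and $\phi>0$.
\begin{itemize}
\item For $N=1$, $E$ is conserved and the orbit is a closed curve around $u_*$, giving periodic oscillation of constant amplitude. The limit exists only when $b=u_*$, which I would exclude as the trivial case.
\item For $N>1$, $E$ decreases to a limit. A LaSalle-type argument (integrability of $u'^2/r$ plus boundedness of $u''$) gives $u'\to0$ and $G(u)\to\min G$, so $u\to u_*$. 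Oscillation follows from Sturm comparison with the linearization $u''+\frac{N-1}{r}u'+\chi m e^{mu_*}(u-u_*)\approx0$.
\end{itemize}

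The main obstacle is the case $N>1$ in (ii). There, the dissipation term $\frac{N-1}{r}u'^2$ decays, so proving convergence needs care. I would handle it by showing that $\int^\infty \frac{u'^2}{r}\,dr<\infty$ and that the solution keeps crossing $u_*$ with positive half-periods, so that the energy drop over each half-period is comparable to $E-\min G$ times $\sum 1/r_k$, which diverges.
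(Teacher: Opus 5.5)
Your part (i), and the boundedness and global existence in part (ii), follow the paper's own argument (Lemma \ref{le3.9} and the first half of Lemma \ref{le3.10}): the same dissipative energies, with coercivity of the potential bounding $u$. One slip concerns $q=-1$, which does occur (at $p=\frac{2N+1}{N+1}$). There your formula for $F$ is undefined, and the paper uses $F(u)=\frac{u}{m}-\chi\ln u$ instead. This still tends to $+\infty$, but it does not satisfy $F(u)\ge u/m-C$, so $q=-1$ has to be handled separately from $q<-1$.

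For $N>1$ in (ii) you take a different and better route. The paper first proves oscillation by contradiction: a solution monotone for all $r$ would tend to $u_*$ from one side, and Sturm comparison with $v''+\frac{N-1}{r}v'+\chi m e^{mu_*}v=0$ then forces a zero. Decay of $E$ then gives monotone amplitudes $u(r_0)>u(r_2)>\cdots>u_*>\cdots>u(r_3)>u(r_1)$. The paper never shows these amplitudes tend to $u_*$, so the limit $\phi\to(\frac{1}{\chi m})^{1/m}$ is asserted rather than proved. You first prove $E\to\min G$, hence $u\to u_*$ and $u'\to0$, and then get oscillation by Sturm comparison with coefficient $\frac{G'(u)}{u-u_*}\to\chi m e^{mu_*}=1$. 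That order closes the gap.

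The key step needs to be stated precisely, though. Integrability of $u'^2/r$ together with bounded $u''$ does not by itself give $u'\to 0$: fixed-size bumps of $u'$ at $r_k=k^2$ are compatible with both. Your alternative also asks for the wrong property. You need not merely positive half-periods but a uniform \emph{upper} bound $r_{k+1}-r_k\le T$, so that $r_k=O(k)$ and $\sum_k 1/r_k$ diverges.

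This bound does hold. Suppose $E_\infty:=\lim E>\min G$. Then for large $r$ the point $(u,u')$ stays in the compact annulus $\{E_\infty\le \frac12 v^2+G(u)\le G(b)\}$, which avoids $(u_*,0)$. So its angular speed about $(u_*,0)$ is bounded below there once the $O(1/r)$ damping is small. With $\delta>0$ the width of $\{G<E_\infty\}$, Cauchy--Schwarz gives $\int_{r_k}^{r_{k+1}}u'^2\,dr\ge \delta^2/T$. The total dissipation is then at least $c\sum_k 1/r_{k+1}=\infty$, which contradicts $E\ge\min G$.

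Your remark that $b=u_*$ must be excluded from the claim that $\lim\phi$ does not exist when $N=1$ is also a correct refinement of the statement.
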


From Propositions \ref{pro-1}, \ref{pro-2} and Remark \ref{re-1}, we observe that no ground state solution exists for any fast or slow diffusion case $p > 1$. However, in the slow diffusion regime where $p>2$ and  $q < \frac{Np}{(N-p)_+}-1$, there exist smooth solutions with compact support, satisfying homogeneous Dirichlet-Neumann free boundary condition.

According to the conditions $2<p<N$ with $p-1<q< \frac{Np}{N-p}-1$, and $N\leq p$, the admissible parameter ranges for $p$ (or equivalently for $m$) that guarantee the existence of such compactly supported solutions are given in Table 1.

\vspace{0.5em}

\noindent

\textbf{Table 1. Parameter ranges for the existence of compactly supported solutions.}
\begin{center}
\begin{tabular}{|c|c|c|}
						\hline
					$N$ & the range corresponding to $p$  \\
					\hline
					$1$ & $p>2$ \\
					\hline
				    $2$  &  $p>2$ \\
					\hline
					$N \geqslant 3$  & $p>\frac{\sqrt{5N^2+2N+1}+3N+1}{2(N+1)}$  \\
					\hline
					\end{tabular}
$\Leftrightarrow$
\begin{tabular}{|c|c|c|}
						\hline
					$N$& the range corresponding to $m$  \\
					\hline
					$1$  & $m>\frac{2}{N}$\\
					\hline
				    $2$ &$m>\frac{2}{N}$\\
					\hline
					$N \geqslant 3$  &$ m> \frac{\sqrt{5N^2+2N+1}-N+1}{2N}$ \\
					\hline
				\end{tabular}
\end{center}
Here $m$ and $p$ are related by $p=\frac{(m+2)N}{N+1}$  (i.e. $m=\frac{(p-2)N+p}{N}$ ).

From Proposition \ref{pro-1}, Proposition \ref{pro-2} and Remark \ref{re-1}, we can conclude that

$\bullet$ Under fast diffusion conditions $1<p\le 2$, no self-similar blow-up solutions with finite mass exist.

$\bullet$ Under slow diffusion conditions $p>2$, the system \eqref{1-5}  admits infinitely many self-similar blow-up solutions with compact support and finite mass, which is a weak solution of \eqref{1-5}, that is for any given $T_0<T$, for any $\varphi(x,t)\in C_0^\infty(\mathbb R^N\times (0, T_0))$,
\begin{align}\label{weaksolution}
\int_0^{T_0}\int_{\mathbb R^N}\rho _t\varphi dxdt+\int_0^{T_0}\int_{\mathbb R^N} | \nabla \rho|^{p-2}\nabla \rho\nabla\varphi dxdt=\chi\int_0^{T_0}\int_{\mathbb R^N} \rho \nabla c\nabla\varphi dxdt.
\end{align}
In fact, we note  that
$$
\rho \left( x,t \right) =( T-t ) ^{-\frac{1}{m}}\phi ( ( T-t ) ^{-\frac{1}{mN}}\left| x \right| ) ,\quad c( x,t) =( T-t)^{\frac{2-mN}{mN}}
\psi(( T-t)^{-\frac{1}{mN}}\left| x \right|),
$$
and $\rho\in C^1(\mathbb R^N\times (0, T_0))$, which satisfies the equation \eqref{1-5} in the classical sense when $(T-t)^{-\frac{1}{mN}}|x|<R$, and
\begin{equation}\label{boundary}
\rho=0, \ \text{for} \  |x|\ge R(t); \quad
\left.\frac{\partial\rho}{\partial{\bf n}}\right|_{\partial B_{R(t)}}=0,
\end{equation}
where $R(t)=(T-t)^{\frac{1}{mN}}R$. Therefore,
$$
\int_0^{T_0}\int_{B_{R(t)}}\rho _t\varphi dxdt-\int_0^{T_0}\int_{B_{R(t)}} \nabla\cdot(| \nabla \rho|^{p-2}\nabla \rho)\varphi dxdt=-\chi\int_0^{T_0}\int_{B_{R(t)}} \nabla\cdot(\rho \nabla c)\varphi dxdt.
$$
Using the boundary condition \eqref{boundary}, we arrive at
\begin{align*}
\int_0^{T_0}\int_{B_{R(t)}}\rho _t\varphi dxdt+\int_0^{T_0}\int_{B_{R(t)}} | \nabla \rho|^{p-2}\nabla \rho\nabla\varphi dxdt=\chi\int_0^{T_0}\int_{B_{R(t)}} \rho \nabla c\nabla\varphi dxdt.
\end{align*}
From  \eqref{boundary}, we further obtain \eqref{weaksolution}.

We have the following conclusion for slow diffusion case.

\begin{theorem}[{\bf Blow-up self-similar solution with shrinking compact support}]
	\label{th-3}
Assume that $m=\frac{(p-2)N+p}{N}$. Let $\left( \rho ,c \right)$ be a backward self-similar solution  with compact support of \eqref{1-5}  as constructed in Propositions \ref{pro-1}, \ref{pro-1-2}, i.e.,
	$$
	\rho \left( x,t \right) =( T-t ) ^{-\frac{1}{m}}\phi ( ( T-t ) ^{-\frac{1}{mN}}\left| x \right| ) ,\quad c\left( x,t \right) =\left( T-t \right) ^{\frac{2-mN}{mN}}\psi ( ( T-t ) ^{-\frac{1}{mN}}\left| x \right|),
	$$
where the parameters satisfy that (i) when $N=1, 2$, $p>2$; (ii) when $N\ge 3$, $p>\frac{\sqrt{5N^2+2N+1}+3N+1}{2(N+1)}$.
Then these self-similar solutions possess the following properties:

$\bullet$ {\bf Finite-time blow-up and concentration}. The solution  blows  up at time  $T$ , and as  $t \to T^{-}$  the bacterial density concentrates at the origin in the sense of distributions, that is
$$\rho \left( x,t \right) \rightarrow \mathcal{M} \left( A \right) \delta \left( x \right) , \, as \, \ t \rightarrow T^-,$$
where $\mathcal{M} \left( A\right) = |\partial B_1|\int_0^{R\left( A\right)}{\phi(r)r^{N-1}dr}$ and $\delta \left( x \right)$ is the Dirac $\delta$-function.

$\bullet$ {\bf Compact support and radial monotonicity}. The profile  $\phi$  is radially symmetric, non-increasing, and has compact support.

$\bullet$ {\bf Shrinkage of the support}. The compact support of the solution shrinks toward the origin as  $t \to T^{-}$ at the rate  $(T-t)^{\frac{1}{mN}}$.
\end{theorem}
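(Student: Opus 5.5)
We take the profile $\phi=u^{\frac{p-1}{p-2}}$, where $u$ is the compactly supported solution of \eqref{3-3} given by Proposition \ref{pro-1} ($N\ge2$) or Proposition \ref{pro-1-2} ($N=1$), with $a=a_c$ or $a>a_2$. For $N\ge 3$ the hypothesis on $p$ in the theorem is exactly the translation, via $q=\frac{m(p-1)}{p-2}$ and $m=\frac{(p-2)N+p}{N}$, of the condition $p-1<q<\frac{Np}{(N-p)_+}-1$ recorded in Table 1. So such a solution exists, and by Remark \ref{re-1} it satisfies $\phi(R)=\phi'(R)=0$. Extending $\phi$ by zero for $r\ge R$ gives the profile. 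We have already checked that the resulting $(\rho,c)$ is a weak solution in the sense \eqref{weaksolution}. It remains to verify the three listed properties.

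\emph{Monotonicity and compact support.} Compact support holds by construction. For monotonicity we show $u'\le0$ on $(0,R)$, which gives $\phi'=\frac{p-1}{p-2}\phi^{1/(p-1)}u'\le 0$. We write \eqref{3-3} in divergence form,
$$B\big(r^{N-1}|u'|^{p-2}u'\big)'=r^{N-1}\Big(\tfrac1m-\chi u^q\Big).$$
Since $u(0)=a>u^*$ (recall $a_c\ge a_1>u^*$), the right-hand side is negative near $0$, so $u'<0$ initially. Suppose $u'$ vanished at a first point $r_0\in(0,R)$. Integrating from $0$ to $r_0$ would give $\int_0^{r_0}s^{N-1}(\frac1m-\chi u^q)\,ds=0$. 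To exclude this we would use the energy
$$E(r)=\frac{(p-1)B}{p}|u'|^p+\frac{\chi}{q+1}u^{q+1}-\frac um,$$
which is non-increasing because $E'=-\frac{B(N-1)}{r}|u'|^p$. At the free boundary $E(R)=0$, hence $E\ge0$ on $[0,R]$. At a critical point $r_0$ with $u(r_0)<u^*$, so a local minimum, we would get $\frac{\chi}{q+1}u^{q+1}-\frac um\ge0$, i.e. $u(r_0)^q\ge \frac{q+1}{\chi m}>(u^*)^q$, a contradiction. A critical point with $u(r_0)\ge u^*$ cannot occur before $u$ first drops below $u^*$ on the descending branch. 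So $u$ is strictly decreasing. (When $a>a_2$ we have $E(R)>0$, and the same argument still applies.)

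\emph{Shrinkage.} This is immediate: $\rho(\cdot,t)$ is supported in $\{|x|\le R(t)\}$ with $R(t)=(T-t)^{\frac1{mN}}R\to0$.

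\emph{Blow-up and concentration.} We have $\|\rho(\cdot,t)\|_\infty=(T-t)^{-1/m}A\to\infty$, and mass is conserved, equal to $\mathcal M(A)$. For $\varphi\in C_c^\infty(\mathbb R^N)$ we change variables $x=(T-t)^{\beta}\xi$ with $\beta=\frac{1}{mN}$; since $\alpha=\frac1m=N\beta$, the Jacobian cancels the prefactor and
$$\int\rho\varphi\,dx=\int_{B_R}\phi(|\xi|)\,\varphi\big((T-t)^{\beta}\xi\big)\,d\xi.$$
By dominated convergence this tends to $\varphi(0)\,\omega_N\int_0^R\phi r^{N-1}dr=\mathcal M(A)\varphi(0)$, which is the claimed convergence $\rho\to\mathcal M(A)\delta$.

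The main obstacle is the monotonicity step, in particular ruling out interior critical points for $a=a_c$ and for $a>a_2$ using only what the earlier propositions provide. The energy/Pohozaev-type argument above is the first thing we would try. If it fails, we would fall back on a phase-plane argument for the first-order system in $(u,|u'|^{p-2}u')$.
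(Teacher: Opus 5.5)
Your proposal is correct and follows essentially the same route as the paper. The profile comes from Propositions \ref{pro-1} and \ref{pro-1-2} with $a\in\{a_c\}\cup(a_2,\infty)$, and monotonicity is obtained exactly as in the paper's Lemma \ref{le3.6}: at a first critical point $r_0$ one has $0<u(r_0)<u^*$ by \eqref{3-13}, so $E(r_0)<0\le E(R)$, contradicting the monotonicity of $E$. The shrinkage of the support and the $\delta$-concentration then follow from the scaling $x=(T-t)^{\frac{1}{mN}}\xi$, as in the paper. The only cosmetic difference is that you pass to the limit by dominated convergence rather than by the paper's explicit uniform-continuity estimate, and you correctly use the prefactor $(T-t)^{-\frac1m}$, which the paper misprints as $(T-t)^{-\frac{1}{Nm}}$.
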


\begin{remark}
This result shows that, under the stated parameter ranges, the  $p$-Laplacian diffusion gives rise to blow-up self-similar solutions that are compactly supported and exhibit support shrinkage, which contrasts sharply with the infinitely expanding supports known for linear diffusion.
\end{remark}

$\bullet$ {\bf Construction of blow-up solutions with multiple bubbles.}
 In fact, based on the previous results, we can construct blow-up solutions with any finite number of bubbles. For example, in the one-dimensional spatial case, when $a > a_c$, $u$ oscillates around $u^*$ with constant amplitude (as illustrated in Figure \ref{fig1}).
\begin{figure}[htbp]
	\centering
	\includegraphics[scale=0.4]{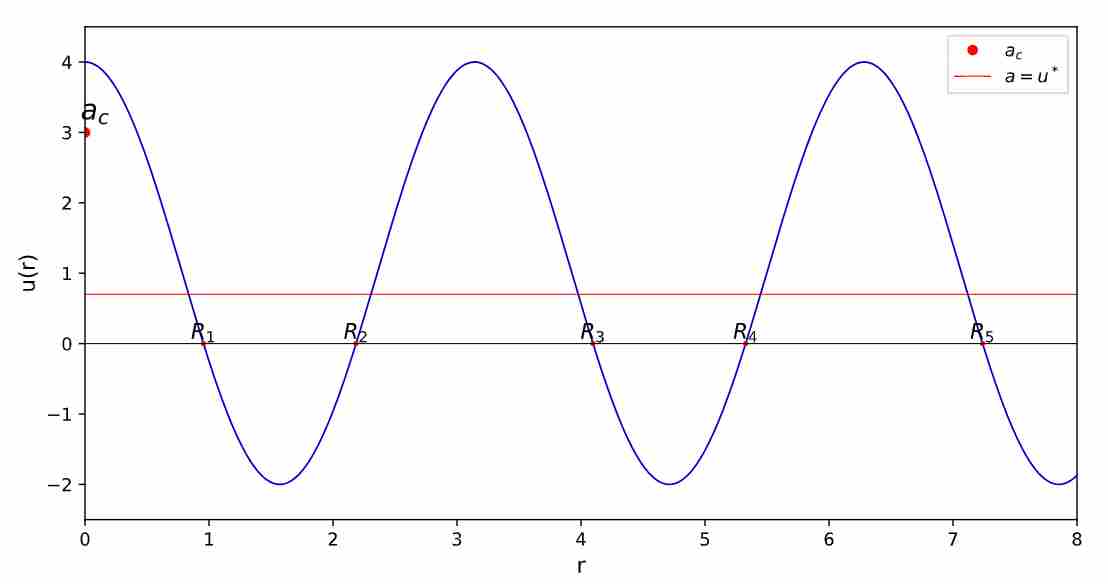}
	\caption{ Various oscillating behaviours of $u(\cdot , r)$ with $N=1$. }
	\label{fig1}
\end{figure}
\begin{figure}[htbp]
	\centering
	\includegraphics[scale=0.4]{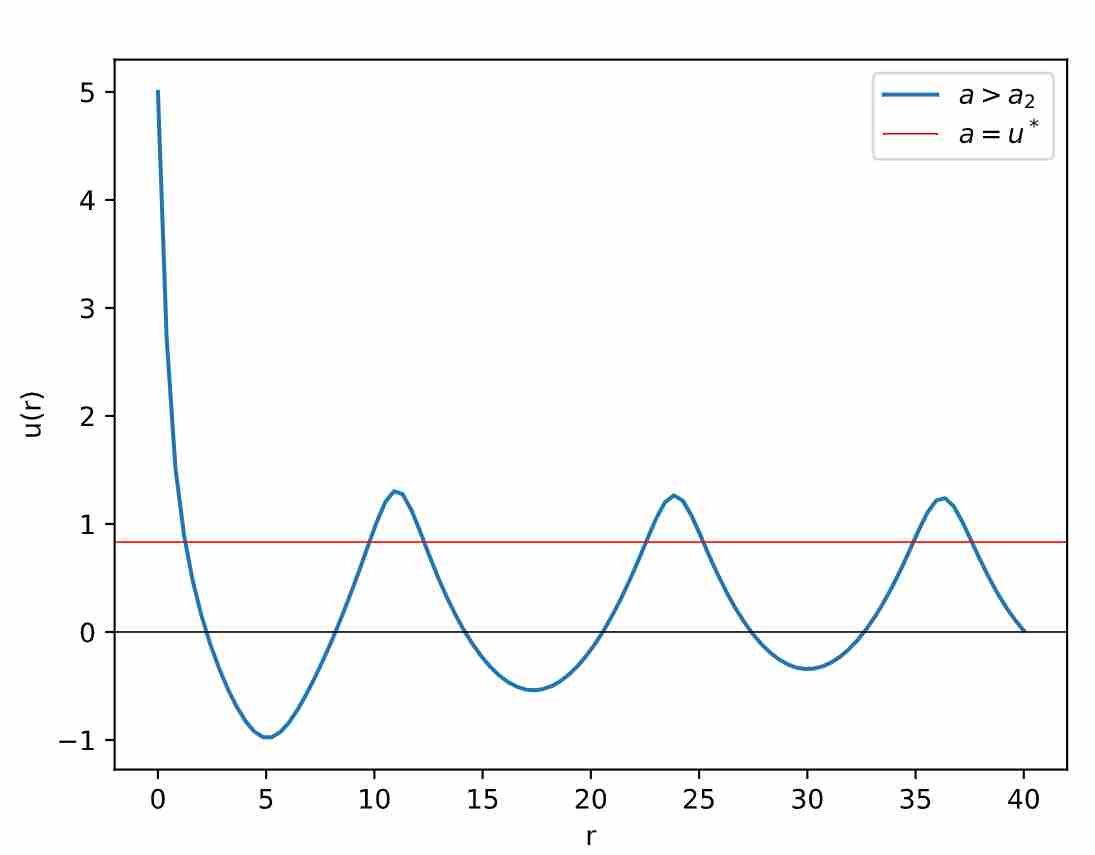}
	\caption{ Various oscillating behaviours of $u(\cdot , r)$ with $p\geq N>1$. }
	\label{fig2}
\end{figure}\\
Let the zeros of $ u$ be successively denoted by $\{ R_i \}_{i=1}^\infty$. We now describe, as an example, the construction of a blow-up solution with three bubbles. Define
$$
\phi(r) =
\begin{cases}
u^{\frac{p-1}{p-2}}(r), & \text{for } 0 < r < R_1, \; R_2 < r < R_3, \; R_4 < r < R_5, \\
0, & \text{otherwise.}
\end{cases}
$$
Then the support of $\rho$ consists of the following regions,
$$
{\bf supp} \ \rho(\cdot, t)=\{x; |x| < (T-t)^{\frac{1}{mN}}R_1,\
(T-t)^{\frac{1}{mN}}R_2 < |x| < (T-t)^{\frac{1}{mN}}R_3,\
(T-t)^{\frac{1}{mN}}R_4 < |x| < (T-t)^{\frac{1}{mN}}R_5\}
$$
and $\rho$ finally converges to Dirac $\delta$-function as $t\to T^-$. For higher dimensional cases (see Figure \ref{fig2}), such  blow-up solution
with multiple bubbles can also be constructed using similar methods.

\section{Analysis of Backward Self-Similar Blow-up Solutions}

 In this section, we search for two types of solutions to the transformed problem  \eqref{2-7}:

$\bullet$ ground state solutions-nonnegative, nontrivial, and continuously differentiable distribution solutions  satisfing $\lim_{r\to\infty}\phi(r)=0$;

$\bullet$ compact support solutions  that satisfy a homogeneous Dirichlet-Neumann free boundary condition, as illustrated by the problem \eqref{eq2-6}.
\medskip
\\
Because the transformed equation takes different forms for the slow diffusion ($p>2$), fast diffusion ($1<p<2$), and linear diffusion ($p=2$) cases, we study the associated initial value problems \eqref{3-3}, \eqref{3-4}, and \eqref{3-6}), respectively.

Firstly, the local existence of solutions to the above three problems can be readily established using standard fixed-point methods. For instance, the solution to \eqref{3-3} can be represented by the following integral equation
$$
u(r) = a+\int_0^r{\left( \frac{1}{B\tau^{N-1}} \right) ^{\frac{1}{p-1}}\left| \int_0^{\tau}{\left( \frac{1}{m}-\chi \left| u \right|^{q-1}u \right) s^{N-1}ds} \right|^{\frac{2-p}{p-1}}\int_0^{\tau}{\left( \frac{1}{m}-\chi \left| u \right|^{q-1}u \right) s^{N-1}ds}d\tau}.
$$
By defining the mapping
$$
Tu := a+\int_0^r{\left( \frac{1}{B\tau^{N-1}} \right) ^{\frac{1}{p-1}}\left| \int_0^{\tau}{\left( \frac{1}{m}-\chi \left| u \right|^{q-1}u \right)s^{N-1}ds} \right|^{\frac{2-p}{p-1}}\int_0^{\tau}{\left( \frac{1}{m}-\chi \left| u \right|^{q-1}u \right) s^{N-1}ds}d\tau} .
$$
and applying the fixed-point framework, local existence can be confirmed. The proof follows standard techniques but is lengthy, hence, we omit the details here and present only the conclusion.
	\begin{lemma}\label{le3.1}
		Assume that $a>0, p>2$, (or $p = 2$). Then the problem \eqref{3-3} (or  \eqref{3-6}) admits a unique solution $u(r) \in C^1[0, R_{max})$ such that either $R_{max}=\infty$, or  $$\underset{r\rightarrow R_{max}^{-}}{\lim}\,\,u(r)= \infty.$$
	\end{lemma}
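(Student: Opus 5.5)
The plan is to prove local existence and uniqueness by a fixed-point argument on the integral formulation written just before the lemma. Then we extend the solution maximally and show that the only possible obstruction is $u\to\infty$. Both problems \eqref{3-3} and \eqref{3-6} are handled at once by writing them in divergence form,
$$
\bigl(r^{N-1}B|u'|^{p-2}u'\bigr)'=r^{N-1}\Bigl(\tfrac1m-\chi f(u)\Bigr),
$$
where $f(u)=|u|^{q-1}u$ (and $B=1$, $f(u)=e^{mu}$ when $p=2$). Integrating from $0$ with $u'(0)=0$ gives
$$
B|u'|^{p-2}u'(r)=r^{1-N}\int_0^r s^{N-1}\Bigl(\tfrac1m-\chi f(u(s))\Bigr)ds=:G[u](r),
$$
hence $u'=\Phi(G[u]/B)$ with $\Phi(s)=|s|^{\frac{2-p}{p-1}}s$, which is exactly the map $T$.

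\textbf{Local existence.} Take $\delta>0$ and the closed set $X=\{u\in C[0,\delta]:\ \|u-a\|_\infty\le a/2\}$. Since $a>0$, on $X$ we have $u\ge a/2>0$, so $f$ is smooth and Lipschitz there. The bound $|G[u](r)|\le Cr$ gives $|u'|\le C r^{1/(p-1)}$, so $T$ maps $X$ into itself for $\delta$ small. When $p=2$, $\Phi$ is the identity and $T$ is a contraction, which settles that case.

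\textbf{Uniqueness for $p>2$.} Here $\Phi$ is only Hölder with exponent $\frac1{p-1}<1$ near $0$, and I expect this to be the main obstacle. One route is to use Schauder's theorem for existence, since $T(X)$ is equicontinuous, and then prove uniqueness separately.
\begin{itemize}
\item \emph{Generic case $a\neq u^*$.} Here $\frac1m-\chi f(a)\neq0$, so $G[u](r)=\frac rN\bigl(\frac1m-\chi f(a)\bigr)+o(r)$ has a fixed sign and satisfies $|G[u]|\ge cr$ near $0$. On the range of $G$, $\Phi$ is then Lipschitz with constant $\lesssim r^{\frac{2-p}{p-1}}$, which is integrable near $0$. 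A Gronwall-type estimate $|u_1-u_2|(r)\le C\int_0^r \tau^{\frac{2-p}{p-1}}\,\tau\,\sup_{[0,\tau]}|u_1-u_2|\,d\tau$ then gives uniqueness; equivalently, $T$ is a contraction for small $\delta$.
\item \emph{Case $a=u^*$.} The constant $u\equiv u^*$ is a solution. To rule out others, suppose $w=u-u^*$ is a solution that is not identically zero. Then $|G|\le C r\sup|w|$, so $|w(r)|\le C\int_0^r(\tau\sup_{[0,\tau]}|w|)^{1/(p-1)}d\tau$. This sublinear inequality does not close by Gronwall alone, so I would instead use energy monotonicity. Define
$$
E(r)=\tfrac{p-1}{p}B|u'|^p+\chi F(u)-\tfrac um,\qquad F'=f,
$$
which satisfies $E'=-\frac{(N-1)B}{r}|u'|^p\le0$. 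Since $F(u)-u/m$ attains a strict minimum at $u^*$, $E(r)\le E(0)$ forces $u\equiv u^*$.
\end{itemize}

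\textbf{Continuation.} Away from $r=0$ the equation is a regular first-order system in $(u,v)$ with $v=B|u'|^{p-2}u'$: $u'=\Phi(v/B)$, $v'=-\frac{N-1}{r}v+\frac1m-\chi f(u)$. It therefore extends as long as $(u,v)$ stays bounded; uniqueness through points where $v=0$ and $u\neq u^*$ follows as in the generic case. Let $R_{max}<\infty$. The energy bound $E(r)\le E(0)$ (using $N\ge1$) gives $\chi F(u)-u/m\le E(0)$. Because $q>0$ (or the exponential grows), $F(u)-u/m\to+\infty$ as $u\to+\infty$ and as $u\to-\infty$ in the signed extension, so $u$ stays bounded. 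Then $|u'|^p$ is bounded, and $v$ is bounded, so the solution continues past $R_{max}$, a contradiction. Hence $R_{max}=\infty$ always, which trivially implies the stated dichotomy. To cover degenerate cases I would phrase the conclusion as "either global or $\limsup|u|=\infty$", consistent with the lemma.
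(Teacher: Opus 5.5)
Your proposal is correct and follows the route the paper indicates. It runs a fixed-point argument on the integral operator $T$, whose details the paper omits. Your continuation step is the same energy argument the paper uses later in Lemma~\ref{le3.2} and Lemma~\ref{le3.10}, so you get $R_{max}=\infty$ directly rather than just the dichotomy.

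Your separate uniqueness treatment for $p>2$ goes beyond the paper's sketch. You use Schauder for existence, the weighted Gronwall estimate when $a\neq u^*$, and the energy-minimum argument when $a=u^*$, where $T$ is not a contraction on any neighbourhood of $u\equiv u^*$. This is exactly what a ``standard'' contraction argument would miss.

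Two small fixes are needed:
\begin{itemize}
\item For \eqref{3-6} the datum $b=\ln A$ may be nonpositive, so take $X$ to be a ball of fixed radius about $b$. Positivity is never needed, since $e^{mu}$ and (because $q>1$) $|u|^{q-1}u$ are locally Lipschitz.
\item In the continuation step, also invoke the energy argument to get forward uniqueness through any $r_0>0$ with $u(r_0)=u^*$, $u'(r_0)=0$.
\end{itemize}
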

\begin{remark}
	When $1<p<2$, $q<0$. Therefore, if $u$ have a positive lower bound, then the above local existence lemma also holds for $1<p<2$.
\end{remark}

\subsection{The slow diffusion case $p>2$}

{\bf Notations:} The solution of \eqref{3-3} depends continuously on the initial value $a>0$; to make this dependence explicit we write $u(r,a)$for the solution corresponding to $u(0)=a$.
\begin{lemma}\label{le3.2}
		Assume that $a>0, \, \chi >0, m>0, p>2, q>p-1, N\geq1$. Let $u(\cdot, a) \in C^1[0, R_{max}(a))$ be a classical solution of \eqref{3-3}, and by $(0, R_{max}(a))$ its maximal interval of existence, then $R_{max}(a)=\infty$, i.e. the solution exists globally in $r$.
	\end{lemma}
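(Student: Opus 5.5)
By Lemma \ref{le3.1}, either $R_{max}(a)=\infty$ or $u(r,a)\to\infty$ as $r\to R_{max}^-$. So it suffices to rule out blow-up to $+\infty$ at a finite radius, and we do this by showing that $u$ stays bounded above on $[0,R_{max})$. Since we allow $u$ to change sign through $|u|^{q-1}u$, the only danger is upward growth.

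The natural tool is the energy functional
$$E(r)=\frac{B(p-1)}{p}|u'(r)|^{p}+\frac{\chi}{q+1}|u(r)|^{q+1}-\frac{1}{m}u(r).$$
We first rewrite \eqref{3-3} as $(B|u'|^{p-2}u')'=-\frac{B(N-1)}{r}|u'|^{p-2}u'-\chi|u|^{q-1}u+\frac1m$. Using $(|u'|^{p-2}u')'u'=\frac{p-1}{p}(|u'|^p)'$, a direct computation gives
$$E'(r)=-\frac{B(N-1)}{r}|u'|^{p}\le 0, \qquad 0<r<R_{max}.$$
For $N=1$ we get $E'\equiv 0$. In both cases $E(r)\le E(0)=\frac{\chi}{q+1}a^{q+1}-\frac{a}{m}$.

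Since $\chi>0$ and $q+1>1$, the function $g(u)=\frac{\chi}{q+1}|u|^{q+1}-\frac1m u$ is coercive: $g(u)\to+\infty$ as $|u|\to\infty$. As the kinetic term is nonnegative, $g(u(r))\le E(0)$ for all $r$. This confines $u(r)$ to the bounded sublevel set $\{g\le E(0)\}$, which gives uniform bounds on both $|u|$ and $|u'|$. This contradicts $u\to\infty$ at a finite $R_{max}$, so $R_{max}(a)=\infty$.

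The main obstacle is justifying the energy identity near $r=0$ and wherever $u'=0$, since \eqref{3-3} is degenerate for $p>2$ and $u$ is only known to be $C^1$ with $|u'|^{p-2}u'\in C^1$. To handle this, we compute $E'$ using the flux $w=B|u'|^{p-2}u'$. Writing $|u'|^p=B^{-p/(p-1)}|w|^{p/(p-1)}$, the map $w\mapsto |w|^{p/(p-1)}$ is $C^1$, with derivative $\frac{p}{p-1}|w|^{\frac{1}{p-1}-1}w$. The chain rule then applies legitimately and yields the identity on $(0,R_{max})$. Continuity of $E$ at $r=0$ comes from the integral representation of $u$, which also shows $u'(r)=O(r^{1/(p-1)})$ there.
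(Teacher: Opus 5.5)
Your proposal is correct and follows essentially the same route as the paper: the same energy $E(r,a)$, the dissipation identity $\frac{dE}{dr}=-\frac{B(N-1)}{r}|u'|^p\le 0$, and the bound $g(u(r))\le E(0,a)$ keep $u$ bounded, which rules out the blow-up alternative of Lemma \ref{le3.1}. Your explicit use of the coercivity of $g$, and your justification of the energy identity through the flux $w=B|u'|^{p-2}u'$ where $u'=0$, make precise steps the paper leaves implicit.
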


	\begin{proof}
We first show that $R_{\max}(a)=\infty$ for any $a>0$. By Lemma \ref{le3.1}, it suffices to establish the boundedness of $u$. To achieve this, we examine the energy functional associated with $u(r,a)$. Denote
		\begin{align}\label{3-8}
		E\left( r, a \right): =\frac{B\left( p-1 \right)}{p}\left| u^{\prime}\left( r,a \right) \right|^p+ \frac{\chi}{q+1}|u(r,a) |^{q+1}-\frac{1}{m}u(r,a),
		\end{align}
with
$$
E(0,a)=\frac{\chi}{q+1}a^{q+1}-\frac{a}{m}.
$$
Using \eqref{3-3}, a direct calculation yields
\begin{equation}\label{eq3-2}
\frac{dE}{dr}\left( r,a \right) =-\frac{B\left( N-1 \right)}{r}\left| u^{\prime}\left( r,a \right) \right|^p\leqslant 0.
\end{equation}
Obviously, we have
$$
E\left( r ,a\right) \geqslant  \frac{\chi}{q+1}|u |^{q+1}-\frac{1}{m}u.
$$
Denote
\begin{align}\label{3-9}
		g(u):=\frac{\chi}{q+1}|u |^{q+1}-\frac{1}{m}u.
		\end{align}
It is not difficulty to arrive at $g(u)$ reaches its minimum at $u^*$, that is
$$
g(u)_{min}=g(u^*)=-\frac{q}{q+1}u^{*}<0,
$$
and
$$
g(u)<0 , \ \text{for}\ u\in (0, u^*],
$$
where $u^*=\left( \frac{1}{\chi m} \right) ^{\frac{1}{q}}$ is the equilibrium point of the equation \eqref{3-3}.
Since $E(r,a)$ is non-increasing, then
$$
E\left( r,a \right) \in \left[-\frac{q}{q+1}u^{*}, E\left( 0,a \right) \right] ,$$ which implies that $u(r,a)$ is bounded. Therefore, $R_{max} =\infty$ .
\end{proof}

Next, we proceed to prove the conclusion of Propositions \ref{pro-1}, \ref{pro-1-2}. For the convenience of subsequent analysis, we collect the oscillation criteria for second-order differential equations with $p-$Laplacian form \cite{11}, which will be used in the proof of Lemma \ref{le3.4} and Lemma \ref{le3.5}.
	\begin{lemma}\label{le3.3}
		Consider the quasilinear differential equation
		\begin{align*}
			&\left( t^{N-1}\left| u^{\prime} \right|^{p\left( t \right) -2}u^{\prime} \right) ^{\prime}+t^{N-1}F\left( t \right) \left| u \right|^{\mu -2}u=0 \quad for\ t>t_0,
		\end{align*}
		where $p(t)>1$ and $\mu >1$. Suppose that  for any $L>0$,
		\begin{align*}
			&\int_{t_0}^{\infty}{\left( \frac{L}{t^{N-1}} \right) ^{\frac{1}{p\left( t \right) -1}}}dt=\infty
		\end{align*}
and $$\int_{t_0}^{\infty}{t^{N-1}F\left( t \right) dt}=\infty.$$
		Then, all radially symmetric solutions of the above equation are oscillatory.
	\end{lemma}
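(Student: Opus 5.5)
We argue by contradiction in the spirit of the Leighton--Wintner criterion. Suppose some radial solution $u$ is not oscillatory. The equation is invariant under $u\mapsto -u$, since $|u|^{\mu-2}u$ is odd, so we may assume $u(t)>0$ for all $t\ge T$ with some $T>t_0$. Put $w(t)=t^{N-1}|u'|^{p(t)-2}u'$, so that the equation reads $w'=-t^{N-1}F(t)u^{\mu-1}$. Also set $G_S(t)=\int_S^t s^{N-1}F(s)\,ds$; by hypothesis $G_S(t)\to\infty$ for every fixed $S$.

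\emph{Step 1: $u'$ is eventually negative.} Consider the Riccati-type quantity $v=w/u^{\mu-1}$. A direct computation gives
$$
v'=-t^{N-1}F-(\mu-1)\frac{w\,u'}{u^{\mu}}=-t^{N-1}F-(\mu-1)\frac{t^{N-1}|u'|^{p(t)}}{u^{\mu}}\le -t^{N-1}F .
$$
Hence $v(t)\le v(T)-G_T(t)\to-\infty$. So there is $T_1\ge T$ with $v<0$, and therefore $u'<0$, on $[T_1,\infty)$.

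\emph{Step 2: $w$ stays below a negative constant.} This is the step where the sign changes of $F$ matter. Since $G_{T_1}$ is continuous and $G_{T_1}(t)\to\infty$, it attains its minimum over $[T_1,\infty)$ at some point $T_3$. Then $G_{T_3}(t)=G_{T_1}(t)-G_{T_1}(T_3)\ge 0$ for all $t\ge T_3$. We integrate $w'=-t^{N-1}Fu^{\mu-1}$ from $T_3$ to $t$ and integrate by parts:
$$
w(t)=w(T_3)-G_{T_3}(t)u^{\mu-1}(t)+(\mu-1)\int_{T_3}^t G_{T_3}(s)u^{\mu-2}(s)u'(s)\,ds .
$$
In the integral, $G_{T_3}\ge0$, $u>0$ and $u'<0$, so it is nonpositive. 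The middle term is also nonpositive. Hence
$$
w(t)\le w(T_3)=:-L<0\qquad\text{for all } t\ge T_3 .
$$
Here $L>0$ because $u'(T_3)<0$. The main care needed is exactly this choice of $T_3$: it makes the integrated-by-parts terms have a definite sign even though $F$ oscillates in sign.

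\emph{Step 3: contradiction.} From Step 2 we get $t^{N-1}|u'|^{p(t)-1}\ge L$, that is, $-u'(t)\ge \left(L/t^{N-1}\right)^{1/(p(t)-1)}$ for $t\ge T_3$. Integrating,
$$
u(t)\le u(T_3)-\int_{T_3}^t\Big(\frac{L}{s^{N-1}}\Big)^{\frac1{p(s)-1}}ds .
$$
The right-hand side tends to $-\infty$ by the first hypothesis, which holds for every $L>0$ and in particular for this one. This contradicts $u>0$.

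Therefore every radial solution has arbitrarily large zeros, i.e. it is oscillatory. The only technical points are that $w$ is $C^1$, which follows from the equation, and that the integration by parts is legitimate, which holds since $u\in C^1$ and $G_{T_3}$ is absolutely continuous.
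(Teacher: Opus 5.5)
Your proof is correct. The paper has no proof to compare it with: it does not prove Lemma~\ref{le3.3}, but imports it as a known oscillation criterion from \cite{11}. It then uses it only in Lemma~\ref{le3.4} and Lemma~\ref{le3.5}.

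What you supply is a self-contained Leighton--Wintner argument, and each step checks out:
\begin{itemize}
\item The Riccati quantity $v=w/u^{\mu-1}$ satisfies $v'\le -t^{N-1}F$, which forces $u'<0$ eventually.
\item Integration by parts, anchored at a minimum point $T_3$ of $G_{T_1}$ so that $G_{T_3}\ge 0$, gives $w\le w(T_3)<0$ even though $F$ may change sign.
\item Integrating $-u'\ge (L/t^{N-1})^{1/(p(t)-1)}$ then contradicts positivity of $u$.
\end{itemize}

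Compared with the bare citation, your proof makes clear what the criterion actually uses:
\begin{itemize}
\item only $\mu>1$, through monotonicity of $s\mapsto s^{\mu-1}$ in Steps 1--2;
\item no relation between $\mu$ and $p(t)$, so sublinear and superlinear cases are handled alike;
\item no sign condition on $F$;
\item the variable exponent, which enters only in Step 3.
\end{itemize}
In both of the paper's applications the coefficient is bounded below by a positive constant. There $G$ is increasing and your minimum-point device is not needed. It is what makes the lemma valid in the generality stated.

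You should make the tacit standing assumptions explicit:
\begin{itemize}
\item $F$ and $p$ are continuous on $[t_0,\infty)$, so that $w\in C^1$ and divergence of both integrals from $t_0$ carries over to divergence from $T$ and $T_3$;
\item solutions are defined on a half-line;
\item ``oscillatory'' means having arbitrarily large zeros, so a solution vanishing identically near infinity either counts as oscillatory or must be excluded as trivial.
\end{itemize}
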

Using the lemma above, when $p\geq N$ , we obtain the following oscillation lemma.
\begin{lemma}\label{le3.4}
		Assume that $N\ge 1, p>2, q>p-1$. Let $u(\cdot, a) \in C^1[0, \infty)$ be a classical solution of \eqref{3-3}.
When $p\geq N$, $u\left(\cdot,a \right)$ oscillates around $u^*$ on $\left( 0,\infty \right)$, where $u^{*} \equiv \left( \frac{1}{\chi m} \right) ^{\frac{1}{q}}$ is the equilibrium point of the problem \eqref{3-3}.
To be more precisely, we have the following conclusions.

\medskip
\begin{enumerate}
\item[(i)] If $a>u^{*}$, there is an increasing sequence $\left( r_i\left( a \right) \right) _{i\geq 0}$ of real numbers with $r_0(a)=0$, such that:
 \begin{itemize}
\item when $N=1$,
		\begin{align*}
			\left\{
			\begin{aligned}
				&	u^{\prime}\left( r_i\left( a \right) ,a \right) =0,\quad  \left( -1 \right) ^iu^{\prime}\left( r,a \right) <0,\quad   r\in \left( r_i\left( a \right) , r_{i+1}\left( a \right) \right) ,\\
				&u\left( r_{2i}\left( a \right) ,a \right) =u\left( r_{2i+2}\left( a \right) ,a \right) >u^{*}>u\left( r_{2i+3}\left( a \right) ,a \right) =u\left( r_{2i+1}\left( a \right) ,a \right),
			\end{aligned}\right.
		\end{align*}
\item when $N\geq2$,
		\begin{align*}
			\left\{
			\begin{aligned}
				&	u^{\prime}\left( r_i\left( a \right) ,a \right) =0,\quad  \left( -1 \right) ^iu^{\prime}\left( r,a \right) <0,\quad   r\in \left( r_i\left( a \right) , r_{i+1}\left( a \right) \right) ,\\
				&u\left( r_{2i}\left( a \right) ,a \right) >u\left( r_{2i+2}\left( a \right) ,a \right) >u^{*}>u\left( r_{2i+3}\left( a \right) ,a \right) >u\left( r_{2i+1}\left( a \right) ,a \right),
			\end{aligned}\right.
		\end{align*}
		for $i\geq 0$.
\end{itemize}

\item[(ii)] If $0<a<u^{*}$, there is an increasing sequence $\left( r_i\left( a \right) \right) _{i\geq1}$ of real numbers with $r_1(a)=0$, such that:

 \begin{itemize}
  \item when $N=1$,
 \begin{align*}
			\left\{
			\begin{aligned}
				&u^{\prime}\left( r_i\left( a \right) ,a \right) =0, \quad  \left( -1 \right) ^iu^{\prime}\left( r,a \right) <0,\quad   r\in \left( r_i\left( a \right) , r_{i+1}\left( a \right) \right) ,\\
				&u\left( r_{2i}\left( a \right) ,a \right) =u\left( r_{2i+2}\left( a \right) ,a \right) >u^{*}>u\left( r_{2i+1}\left( a \right) ,a \right) =u\left( r_{2i-1}\left( a \right) ,a \right),
			\end{aligned}\right.
		\end{align*}
\item when $N\geq2$,
		\begin{align*}
			\left\{
			\begin{aligned}
				&u^{\prime}\left( r_i\left( a \right) ,a \right) =0, \quad  \left( -1 \right) ^iu^{\prime}\left( r,a \right) <0,\quad   r\in \left( r_i\left( a \right) , r_{i+1}\left( a \right) \right) ,\\
				&u\left( r_{2i}\left( a \right) ,a \right) >u\left( r_{2i+2}\left( a \right) ,a \right) >u^{*}>u\left( r_{2i+1}\left( a \right) ,a \right) >u\left( r_{2i-1}\left( a \right) ,a \right),
			\end{aligned}\right.
		\end{align*}
		for $i\geq 1$.
\end{itemize}
\end{enumerate}
	\end{lemma}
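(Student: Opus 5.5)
The plan is to shift the equation to $w=u-u^*$, apply the oscillation criterion Lemma \ref{le3.3} to get infinitely many sign changes of $w$, and then use the energy $E$ from the proof of Lemma \ref{le3.2} to order the extrema. By Lemma \ref{le3.2} the solution $u(\cdot,a)$ exists on $[0,\infty)$ and is bounded, since $E(r,a)\le E(0,a)$ and $g$ is coercive.

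\textbf{Step 1: rewrite as a Lemma \ref{le3.3} equation.} Multiplying \eqref{3-3} by $r^{N-1}$ gives
\begin{equation*}
\left(r^{N-1}|w'|^{p-2}w'\right)'+r^{N-1}\frac{\chi}{B}\,\frac{|u|^{q-1}u-(u^*)^q}{u-u^*}\,w=0 .
\end{equation*}
We take $\mu=2$ and $F(r)=\frac{\chi}{B}\frac{|u|^{q-1}u-(u^*)^q}{u-u^*}$ (with the derivative value at $w=0$). Since $t\mapsto|t|^{q-1}t$ is strictly increasing, $F>0$. Because $u$ is bounded, $F\ge c>0$ for all $r$.

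\textbf{Step 2: check the two hypotheses.} First, $\int^\infty r^{N-1}F\,dr=\infty$ holds trivially. Second, $\int^\infty (L/r^{N-1})^{1/(p-1)}dr=\infty$ holds exactly when $(N-1)/(p-1)\le1$, i.e. $p\ge N$. This is where $p\ge N$ enters.

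Lemma \ref{le3.3} then says $w$ oscillates, so $u$ crosses $u^*$ infinitely often. The exponent $\mu=2$ is not strictly $>1$-problematic, but Lemma \ref{le3.3} is stated for $\mu>1$, so $\mu=2$ is fine.

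\textbf{Step 3: extract the critical points $r_i$.} Between consecutive zeros of $w$ there is a unique critical point. At a point where $u'=0$ and $u>u^*$, \eqref{3-3} gives $(|u'|^{p-2}u')'<0$, so $u'$ changes sign from $+$ to $-$ and the point is a strict local maximum. Symmetrically, $u<u^*$ gives a strict local minimum.

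Critical points with $u=u^*$ cannot occur unless $u\equiv u^*$. The reason is that $u'=0$ and $u=u^*$ give the equilibrium, and we need uniqueness of the initial value problem at the regular point. That uniqueness is delicate for the $p$-Laplacian when $p>2$, since the equation is degenerate at $u'=0$. I would argue it via the integral formulation together with the monotonicity of $|u|^{q-1}u$, or via the energy level: $E=g(u^*)$ is the global minimum, so $E$ would be constant afterwards, forcing $u'\equiv0$.

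Near $r=0$ the integral equation shows $u'<0$ initially when $a>u^*$ and $u'>0$ when $a<u^*$. This gives the sign pattern $(-1)^iu'<0$ with $r_0=0$ in case (i), resp. $r_1=0$ in case (ii). Monotonicity between critical points, together with the oscillation from Step 2, shows that no extra critical points occur.

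\textbf{Step 4: order the extrema via the energy.} At every $r_i$ we have $E(r_i)=g(u(r_i))$.

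For $N=1$, $E$ is constant by \eqref{eq3-2}. Hence consecutive maxima satisfy $g(u(r_{2i}))=g(u(r_{2i+2}))$, and since $g$ is strictly increasing on $(u^*,\infty)$, the maxima are equal. Minima lie in $(\,\cdot\,,u^*)$, where $g$ is strictly decreasing, so they are equal as well. Note the minima might be negative; there $g$ is still decreasing for $u<u^*$, so this causes no issue.

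For $N\ge2$, $E$ is strictly decreasing wherever $u'\ne0$, so $E(r_{i+2})<E(r_i)$. Strict monotonicity of $g$ on each side of $u^*$ then gives strictly decreasing maxima and strictly increasing minima, which is the stated chain of inequalities.

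The main obstacle is the uniqueness and degeneracy issue at points where $u'=0$ and $u=u^*$, needed to exclude degenerate critical points. I will handle it with the energy-minimum argument above.
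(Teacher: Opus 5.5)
There is a gap at exactly the step you flag as the main obstacle, and the argument you commit to does not close it for $N\ge 2$. Suppose $u(r_0)=u^*$ and $u'(r_0)=0$ at some $r_0>0$. Your energy argument gives $E(r)\le E(r_0)=g(u^*)=\min g$ for $r\ge r_0$, hence $u\equiv u^*$ on $[r_0,\infty)$. That is only forward uniqueness from the equilibrium, and it yields no contradiction. The scenario you must exclude is precisely a non-equilibrium trajectory that \emph{arrives} at $(u^*,0)$ at finite $r$ and stays there. (For $N=1$ there is no issue, since $E\equiv g(a)>g(u^*)$.)

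Step 2 does not rescue this. A function with $w\equiv 0$ on a tail has arbitrarily large zeros, so an oscillation criterion like Lemma \ref{le3.3} says nothing against it. Moreover, with $\mu=2<p$ you are in the sub-half-linear regime, where uniqueness at $w=w'=0$ cannot be taken for granted. Both the claim that $u$ crosses $u^*$ infinitely often and your extraction of the $r_i$ depend on excluding this case, so it is a real hole.

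Either of two short arguments fills it. The first option you mention, made concrete, is what the paper does via \eqref{3-13}. Let $r_k$ be a critical point with $u(r_k)>u^*$. The next zero $\tilde r$ of $u'$ exists: otherwise $u$ is strictly decreasing on a tail, so $w$ is eventually strictly one-signed, contradicting Step 2. Integrating \eqref{3-3} gives
\begin{equation*}
B\tilde r^{\,N-1}|u'(\tilde r)|^{p-2}u'(\tilde r)=\int_{r_k}^{\tilde r}\left(\frac1m-\chi|u|^{q-1}u\right)s^{N-1}\,ds .
\end{equation*}
If $u(\tilde r)\ge u^*$, then $u>u^*$ on $[r_k,\tilde r)$, the integrand is negative, and $u'(\tilde r)<0$, a contradiction. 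So each next extremum lies strictly on the other side of $u^*$; the case of minima is symmetric, and the start is $r_0=0$. Alternatively, run the energy backwards. Set $\tilde E:=E-g(u^*)\ge \frac{B(p-1)}{p}|u'|^p$. Then $\tilde E'\ge-\frac{(N-1)p}{(p-1)r}\tilde E$, so $r^{(N-1)p/(p-1)}\tilde E$ is nondecreasing. Since $\tilde E(0)=g(a)-g(u^*)>0$ for $a\ne u^*$, $\tilde E>0$ for all $r$, and $(u^*,0)$ is never reached.

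With that repair, your route is a legitimate and somewhat cleaner variant of the paper's. The paper assumes $u$ is monotone on a tail, deduces $u\to u^*$, and applies Lemma \ref{le3.3} with $\mu=q+1$ to $v=u-u^*>0$, using $\frac{\chi u^q-1/m}{(u-u^*)^q}\ge\chi$; it then repeats this on each later tail. Your single global application with $\mu=2$ and $F\ge c>0$ (from boundedness of $u$) avoids the limit argument. It also handles tails below $u^*$, where $u$ may be negative, on the same footing. Your energy ordering in Step 4 coincides with the paper's.
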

\begin{figure}[htbp]
	\centering
	\includegraphics[scale=0.4]{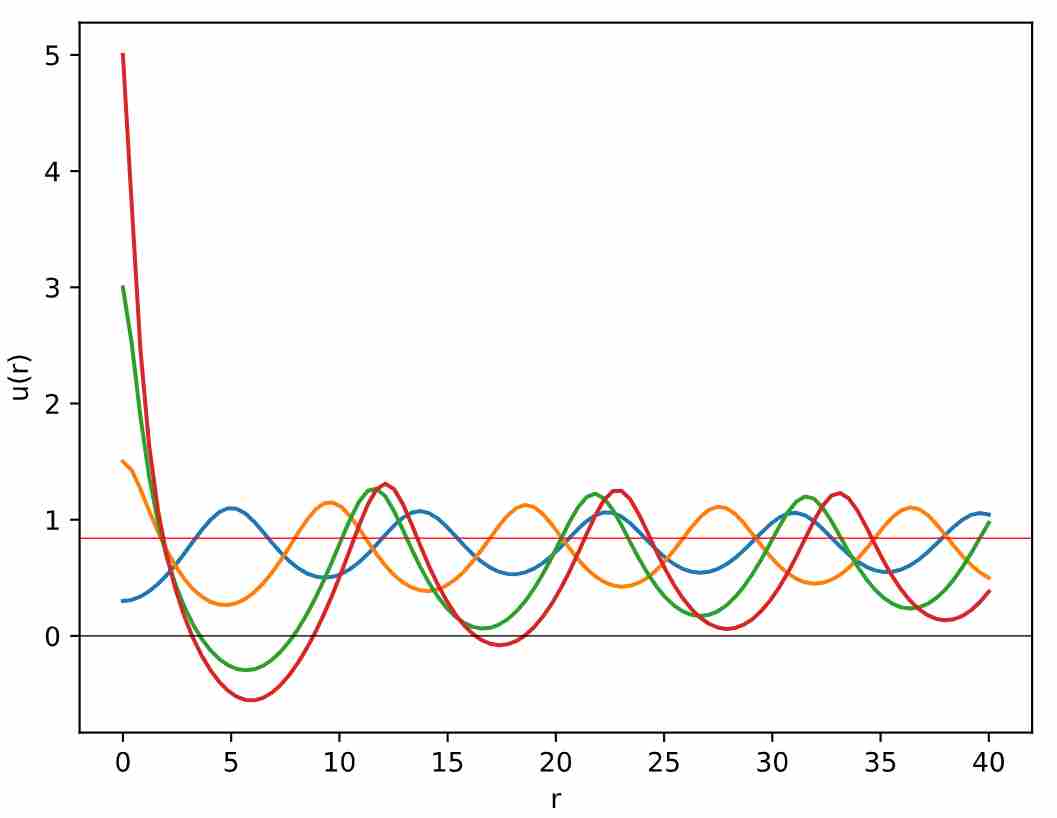}
	\caption{ Various oscillating behaviours of $u(\cdot , r)$ with $p\geq N>1$. }
	\label{fig1:img}
\end{figure}
\begin{proof}
Without loss of generality, we assume $a > u^*$ (the case  $a < u^*$  can be handled similarly).
By L'Hospital's rule and the equation \eqref{3-3}, we observe that
\begin{align}\label{3-11}
		\left( \left| u^{\prime}\left( 0,a \right) \right|^{p-2}u^{\prime}\left( 0,a \right) \right) ^{\prime}=\frac{1}{BN}\left( \frac{1}{m}-\chi a^{q} \right) <0.
	\end{align}		
		Hence $u\left( r,a \right)$ attains its local maximum at $r=0$. This means that $u\left( r,a \right)$ decreases monotonically for small $r$.

(i)We show that $u(r, a)$ cannot remain monotonic for all time. Assume for contradiction that $u(r, a)$ decreases monotonically for all time. Due to the boundedness of $u(r,a)$ and \eqref{3-3}, we deduce
	    $$
		\lim _{r\rightarrow \infty}\chi \left| u \right|^{q-1}u-\frac{1}{m}= 0.
		$$
That is,
\begin{align}\label{3-12}
		\lim _{r\rightarrow \infty}u\left( r,a \right) =u^{*}.
	\end{align}	
Noticing that \eqref{3-12} and
\begin{align}\label{3-13}
		|u^{\prime}|^{p-2}u^{\prime}(r,a)=\frac{1}{Br^{N-1}}\int_0^{r}{\left( \frac{1}{m}-\chi \left| u \right|^{q-1}u \right) s^{N-1}}ds<0,
	\end{align}
therefore, $u\left( r,a \right) \in \left(u^{*},a \right] ,$ for $ r\in \left[ 0,\infty \right)$.

Denote
\begin{align}\label{3-14}
		v=u-u^*>0.
	\end{align}
Substituting \eqref{3-14} into \eqref{3-3}, we have
		\begin{align}\label{3-15}
			&\left( \left| v^{\prime} \right|^{p-2}v^{\prime} \right) ^{\prime}+\frac{\left( N-1 \right)}{r}\left| v^{\prime} \right|^{p-2}v^{\prime}+\frac{\chi u^{q}-\frac{1}{m}}{\left( u-u^{*} \right) ^q}|v|^{q-1}v=0.
		\end{align}
Noticing that $q>p-1>1$ and $\frac{u}{u^{*}} >1$, then
		$$\frac{\chi u^{q}-\frac{1}{m}}{\left( u-u^{*} \right) ^q}=\chi\frac{(\frac{u}{u^*})^q-1}{\left( \frac{u}{u^*}-1 \right) ^q}
\geq\chi.$$
Since $p\geq N$,  by Lemma \ref{le3.3}, all radially symmetric solutions of \eqref{3-15} oscillates around $v=0$, which contradicts $v>0$.	

(ii) Let $r_1$ be the first minimum point, then from \eqref{3-13}, we see that $u\left( r_1 ,a\right)<u^{*}$  (otherwise, $u'(r_1, a)<0$). Similar to the proof in (i), $u\left( r,a \right)$ cannot remain increasing in $\left( r_1,\infty \right)$. By repeating the above process, we obtain two sequences of extreme points,
		\begin{align*}
		\text{maximum points}\quad \left\{ 0=r_0<r_2<\cdots \right\} ,
		\end{align*}
		and
		\begin{align*}
		\text{minimum points}	\quad \left\{ r_1<r_3<\cdots \right\} ,
		\end{align*}
with $r_i<r_{i+1}$. From \eqref{eq3-2}, and observing the monotonicity of $E\left( r,a \right)$, it is clear that for $N=1$, $E(r, a)\equiv E(0, a)$. Therefore,
\begin{align*}
			u\left( r_0 ,a\right) =u\left( r_2 ,a\right) =u\left( r_4 ,a\right) =\cdots >u^{*},
		\end{align*}
		and
		\begin{align*}
			&u\left( r_1,a \right) =u\left( r_3 ,a\right) =u\left( r_5 ,a\right)=\cdots <u^{*} .
		\end{align*}
For $N\geq2$
		\begin{align*}
			u\left( r_0 ,a\right) >u\left( r_2 ,a\right) >u\left( r_4 ,a\right) >\cdots >u^{*},
		\end{align*}
		and
		\begin{align*}
			&u\left( r_1,a \right) <u\left( r_3 ,a\right) <u\left( r_5 ,a\right)<\cdots <u^{*} .
		\end{align*}
		This lemma is proved.
\end{proof}

The above lemmas show that the solution of the problem \eqref{3-3} always exists globally but may change sign. Since our goal is to find a nonnegative solution to \eqref{3-3}, we now introduce the following definition. Let $u(\cdot, a)$ be the global solution of \eqref{3-3}. For any $a>0$, define
$$
R(a) \coloneqq \sup\left\{ R>0:u(0)=a\,\, \text{and}  \,\,u\left( r \right) >0 , \,r\in \left[ 0,R \right) \right\}.
$$
Due to the positivity of $a$ and the continuity of $u\left(\cdot,a \right)$, we have $R\left( a \right) >0$. We consider the sets
		\begin{align*}
			&\mathcal{P} :=\left\{ a>0:R\left( a \right) =\infty \right\} , \\
			&\mathcal{N} :=\left\{ a>0:R\left( a \right) <\infty \quad \text{and} \quad u^{\prime}\left( R\left( a \right) ,a \right) <0 \right\},\\
			&\mathcal{N} _0:=\left\{ a>0:R\left( a \right) <\infty \quad \text{and} \quad u^{\prime}\left( R\left( a \right) ,a \right) =0 \right\} .
		\end{align*}
It is evident that the three sets are mutually disjoint and $\mathcal{P} \bigcup{\mathcal{N} \bigcup{\mathcal{N} _0=\left( 0,\infty \right)}}$.  We first show that both $\mathcal{P}$ and $\mathcal{N}$ are nonempty and open in $(0, \infty)$. Finally, using the connectedness of $(0, \infty)$ and the fact that $\mathcal{P}$ and $\mathcal{N}$ are nonempty open sets, we conclude that $\mathcal{N}_0$ is nonempty.
\begin{lemma}\label{le3.5}
		Assume that $N\ge 1$, $p>2$, $p-1<q<\frac{Np}{(N-p)_+}-1$. Then $\left( 0,u^{*} \right] \subset \mathcal{P}$ and there exists $a_2>u^*$ such that $\left( a_2,\infty \right) \subset \mathcal{N}$.
	\end{lemma}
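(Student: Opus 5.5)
Two claims are to be proved: $(0,u^*]\subset\mathcal P$, and there is $a_2$ with $(a_2,\infty)\subset\mathcal N$. The first part will use the energy $E$ from Lemma \ref{le3.2}. The second part will use a blow-up rescaling that reduces the problem to the Lane--Emden equation.

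For $0<a\le u^*$, the case $a=u^*$ is trivial, since $u\equiv u^*$ is the solution (by uniqueness). For $0<a<u^*$, we have $E(0,a)=g(a)<0$. Because $E(\cdot,a)$ is non-increasing by \eqref{eq3-2}, this gives $g(u(r,a))\le E(r,a)\le g(a)<0$ for all $r$. Now $g(0)=0$ and $g$ is continuous, so $u(r,a)$ can never reach $0$. More precisely, $u(r,a)$ stays in the sublevel set $\{g\le g(a)\}$, which is a compact interval $[a,\bar a]\subset(0,\infty)$. Hence $R(a)=\infty$ and $a\in\mathcal P$.

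For large $a$, rescale by setting $w(s)=u(\lambda s,a)/a$ with $\lambda$ chosen so that $B\lambda^{-p}a^{p-1}=\chi a^{q}$, i.e. $\lambda=(B/\chi)^{1/p}a^{-(q-p+1)/p}\to0$; this is possible since $q>p-1$. Then $w$ solves
\begin{align*}
(s^{N-1}|w'|^{p-2}w')'+s^{N-1}\Big(|w|^{q-1}w-\tfrac{1}{\chi m}a^{-q}\Big)=0,\qquad w(0)=1,\ w'(0)=0.
\end{align*}
As $a\to\infty$ the constant term tends to $0$. By continuous dependence on parameters for the integral formulation, $w\to W$ in $C^1$ on compact sets, where $W$ solves the Lane--Emden problem $\Delta_pW+|W|^{q-1}W=0$, $W(0)=1$. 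Since $q+1<\frac{Np}{(N-p)_+}$ is subcritical, the standard theory (Pohozaev identity, or the known results on the $p$-Laplacian Lane--Emden equation) shows that $W$ has a first zero $s_0<\infty$ with $W'(s_0)<0$. For $N\le p$ this also follows from the oscillation result Lemma \ref{le3.3}, applied with $F\equiv1$ as in Lemma \ref{le3.4}, once we know $W$ cannot stay positive and monotone toward $0$.

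The transversality $W'(s_0)<0$ combined with $C^1$ convergence means that for all $a$ large, $w$ also crosses zero transversally near $s_0$, before any other degeneracy can occur. Undoing the scaling gives $R(a)=\lambda s_0(1+o(1))<\infty$ and $u'(R(a),a)=a\lambda^{-1}w'<0$. Hence there is $a_2>u^*$ with $(a_2,\infty)\subset\mathcal N$.

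The main obstacle is the Lane--Emden step: proving that every solution with $W(0)=1$ has a finite first zero in the whole subcritical range. This includes $N=1$ and $p\ge N$, where we can instead use conservation of energy ($N=1$) or the oscillation lemma, and the case $2<p<N$, which requires a Pohozaev argument. For $2<p<N$ I would first try the Pohozaev identity. If $W>0$ on $[0,\infty)$, then $W$ would be a positive entire radial solution, which is excluded in the subcritical range by the Serrin--Zou Liouville theorem. Such a $W$ would also have to be decreasing to $0$, because the energy is nonincreasing and the only equilibrium is $0$. So a zero exists, and at that zero $W'\ne0$ by uniqueness for the initial value problem, since $W\equiv0$ otherwise.
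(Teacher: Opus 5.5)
Your proposal is correct and follows essentially the same route as the paper: energy monotonicity ($E(r,a)\le g(a)<0$ forbids $u$ from reaching $0$) gives $(0,u^*]\subset\mathcal P$, and the blow-up rescaling $u\mapsto a^{-1}u(\cdot\,a^{-\lambda})$ with $\lambda=(q-p+1)/p$ reduces the problem to the Lane--Emden equation, whose transversal first zero persists for all large $a$. The differences are cosmetic: you argue directly instead of by contradiction along a sequence $a_j\to\infty$, and you justify the finite first zero of the limit problem explicitly (energy/oscillation for $p\ge N$, the Serrin--Zou Liouville theorem or Pohozaev for $2<p<N$), where the paper simply invokes Lemma~\ref{le3.3} together with a cited oscillation result.
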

\begin{proof}
First, we show that $\left( 0,u^{*} \right] \subset \mathcal{P}$. Obviously, $u^*\in \mathcal{P}$. For any $a\in \left( 0,u^{*} \right)$, we observe that $E(0, a)<0$. By the monotonicity of $E\left( r,a \right)$, we derive that $E\left( r,a \right) \leq E(0, a)<0$, for $r\in (0, \infty)$. We claim that $R(a) =\infty$. Otherwise, $E\left( R\left( a \right) ,a \right) =\frac{B\left( p-1 \right)}{p}\left| u^{\prime}\left( r,a \right) \right|^p\geq 0$. It is a contradiction. Therefore, $R(a) =\infty$, and we conclude that
$\left( 0,u^{*} \right] \subset \mathcal{P}$.

Next, we claim that there exists $a_2>u^*$ such that $\left( a_2,\infty \right) \subset \mathcal{N}$.
Assume for contradiction that there exists a sequence $\{a_j\}$ with $a_j\rightarrow \infty$ as $j\rightarrow \infty$ such that $a_j \notin \mathcal{N}$ for sufficiently large $j$. Let
\begin{align}\label{3-16}
			&\tilde{u_j}( r,a_j) =\frac{1}{a_j}u_j( \frac{r}{a_j^{\lambda}},a_j ) ,
		\end{align}
		where $
		\lambda =\frac{\left( p-1 \right) \left( m+2-p \right)}{p\left( p-2 \right)}>0.
		$ Substituting \eqref{3-16} into \eqref{3-3}, \eqref{3-3} is transformed into
		\begin{align}\label{3-17}
			\left\{
			\begin{aligned}
				&( B| \tilde{u_j}^{\prime}|^{p-2}\tilde{u_j}^{\prime}\ ) ^{\prime}+\frac{B\left( N-1 \right)}{r}| \tilde{u_j}^{\prime} |^{p-2}\tilde{u_j}^{\prime} +\chi | \tilde{u_j} |^{q-1}\tilde{u_j}-\frac{1}{m}{a_j}^{-q}=0,\\
				&\tilde{u_j}( 0,a_j ) =1, \tilde{u_j}^{\prime}( 0,a_j ) =0.
			\end{aligned}\right.
		\end{align}
By definition of $R( a_j )$, it is easy to know $\tilde{u_j}( r,a_j) >0$ for  $ r\in [ 0,{a}^{\lambda}_jR( a_j ))$.

In what follows, we show that for sufficiently large $a_j$, the sequence $\tilde{u_j}$ converges to $w$, where $w$ is the unique solution to the initial value problem
\begin{align}\label{3-18}
			\left\{
			\begin{aligned}
				&( B\left| w^{\prime}\left( r \right) \right|^{p-2}w^{\prime}\left( r \right) ) ^{\prime}+\frac{B\left( N-1 \right)}{r}\left| w^{\prime}\left( r \right) \right|^{p-2}w^{\prime}\left( r \right) +\chi \left| w\left( r \right) \right|^{q-1}w\left( r \right) =0,\\
				&	w\left( 0 \right) =1, w^{\prime}\left( 0 \right) =0.
			\end{aligned}\right.
		\end{align}

To establish the uniform boundedness of $\tilde{u_j}$, it suffices to prove that $u_j$ is uniformly bounded on $(0,R(a_j))$. We claim that $u_j(r)$ attains the maximum value at $r=0$. Suppose that contrary holds, then there exists a point $r^* \in (0,\infty)$ such that $u_j(r^*)>a_j>u^*$, with $u'_j(r^*)=0$. Using \eqref{3-9}, we have
 $$g_j(u_j(r^*))> g_j(a_j).$$
From \eqref{3-8}, this implies
$$E_j(r^*,a_j)>E_j(0,a_j),$$
contradicting the monotonicity of $E_j(r,a_j)$. Therefore,  we conclude that $$ u_j(r, a_j) \leq a_j$$
 for all $r \in [0,\infty)$. It is straightforward to conclude that $0 \leq u_j(r) \leq a_j$ for all $r \in [0, R(a_j)]$. From \eqref{3-16}, we observe that
		\begin{align}\label{3-19}
			&0\leq \tilde{u_j}\leq 1 ,
		\end{align}
		for $ r\in [0,a_j^{\lambda}R( a_j )]$.
		From \eqref{3-17} and \eqref{3-19}, we see that for $r \in [0, a_j^{\lambda}R( a_j )]$,
		\begin{align}\label{3-20}
			&B( | \tilde{u_j}^{\prime} |^{p-2}\tilde{u_j}^{\prime} ) =-\frac{1}{r^{N-1}}\int_0^r{\chi |  \tilde{u_j} |^{q-1} \tilde{u_j}s^{N-1}}ds+\frac{1}{r^{N-1}}\int_0^r{\frac{1}{m} {a_j}^{-q}s^{N-1}}ds \notag \\
			&\qquad\qquad \quad \leq \frac{1}{r^{N-1}}\int_0^r{\frac{1}{m}{a_j}^{-q}}s^{N-1}ds\leq \frac{1}{m}{a_j}^{-q}r
		\end{align}
		and
		\begin{align}\label{3-21}
			&B(| \tilde{u_j}^{\prime} |^{p-2}\tilde{u_j}^{\prime}) \geq -\frac{1}{r^{N-1}}\int_0^r{\chi |  \tilde{u_j} |^{q-1}\tilde{u_j}s^{N-1}}ds\geq -\chi r.
		\end{align}
By \eqref{3-19}, \eqref{3-20} and \eqref{3-21}, we see that $| \tilde{u_j} |^{q-1}\tilde{u_j}$ and $ | \tilde{u_j}^{\prime} |^{p-2}\tilde{u_j}^{\prime}$ are uniformly bounded on $\left[ 0, R \right]$ for any $0<R \leq {a}^{\lambda}_jR( a_j )$. Then by Arzel$\grave{a}$-Ascoli theorem, as $a_j\rightarrow \infty$,
		\begin{align}\label{3-22}
			&| \tilde{u_j}|^{q-1}\tilde{u_j} \rightarrow \left| w \right|^{q-1}w , \,
			\frac{1}{m} {a_j}^{-q}\rightarrow 0
		\end{align}
uniformly on $\left[ 0, R \right]$ for any  $0<R\leq a^{\lambda}R( a_j )$.		
By \eqref{3-17}, we see that
\begin{align*}		|\tilde{u_j}^{\prime}|^{p-2}\tilde{u_j}^{\prime}(r)=\frac{1}{Br^{N-1}}\int_0^{r}{( \frac{1}{m}-\chi | \tilde{u_j} |^{q-1}\tilde{u_j}) s^{N-1}}ds.
	\end{align*}
Then by \eqref{3-20} to \eqref{3-22}, we derive that
\begin{align}\label{3-23}
		| \tilde{u_j}^{\prime} |^{p-2}\tilde{u_j}^{\prime} \rightarrow\left| w^{\prime} \right|^{p-2}w^{\prime},
	\end{align}
as $a_j\rightarrow \infty$.
        By \eqref{3-17}, \eqref{3-22} and \eqref{3-23}, we see that $( | \tilde{u_j}^{\prime} |^{p-2}\tilde{u_j}^{\prime} ) ^{\prime} \rightarrow(\left| w^{\prime} \right|^{p-2}w^{\prime} ) ^{\prime}$ as $a_j\rightarrow \infty$.
		Thus, for any  $0<R \leq a^{\lambda}_jR( a_j )$, we conclude
		\begin{align}\label{3-24}
			\begin{aligned}
				&\lim _{a_j\rightarrow \infty}\underset{r\in \left[ 0, R\right]}{sup}| \tilde{u_j}( r,a_j ) -w\left( r \right) |=0.
			\end{aligned}
		\end{align}	
Based on Lemma \ref{le3.3} and the results from Reference \cite{17}, when $p\geq N$ or $2<p<N$ with $p-1<q<\frac{Np}{N-p}-1$, the solution $w\left( r \right)$  oscillates around $0$. Therefore, let $z_1>0$ be the first zero, then
\begin{align}\label{3-25}
			\begin{aligned}
				&w(r)>0, \,\, \text{and} \,\, w^{\prime}\left( r \right) <0 \,\, \text{for}\, \,r \in (0,z_1),  \,\, w\left( z_1 \right) =0,  \,\, w^{\prime}\left( z_1 \right) <0.
			\end{aligned}
		\end{align}	
By \eqref{3-25}, there exists $\delta >0$ such that $w\left( r \right) <0$ for all $r\in \left( z_1,z_1+\delta \right)$. From \eqref{3-24}, for any $r\in \left( z_1,z_1+\delta \right)$, we have $\tilde{u_j}\left( r \right) <0$ when $a_j$ is sufficiently large (depending on $r$). This implies that
\begin{align}\label{3-26}
		a^{\lambda}_jR( a_j ) \leq z_1
	\end{align}
for $a_j$ large enough.
If $\sigma \in \left( 0,z_1 \right)$, then $w\left( r \right) \geq w\left( \sigma \right) >0$ for $r\in \left[ 0,\sigma \right]$. From \eqref{3-24}, we infer that
 $\tilde{u_j}\left( r\right) >\frac{w\left( \sigma \right)}{2}>0$ for  all $r\in \left[ 0,\sigma \right]$, when $a_j$ is sufficiently large. Consequently, we have
		$\sigma <a^{\lambda}_jR( a_j )$ for $a_j$ large enough which implies
\begin{align}\label{3-27}
		z_1 \leq a^{\lambda}_jR( a_j ) .
	\end{align}
Combining \eqref{3-26} and \eqref{3-27}, we conclude that $\underset{a_j\rightarrow \infty}{\lim}\,\,a^{\lambda}_jR( a_j ) = z_1.$
Since $\lambda>0$, it is easy to see that $R( a_j )<\infty$ as $a_j\rightarrow \infty$. From \eqref{3-16}, \eqref{3-24} and \eqref{3-25}, we see that for  sufficiently large $a_j$,
		$$u_j(r)>0, \,\, \text{and} \,\, u_j^{\prime}\left( r \right) <0 \,\, \text{for}\, \,r \in (0,R(a_j)),  \,\, u_j( R(a_j) ) =0,\,\,u_j^{\prime}( R(a_j) ) <0,$$
which implies $a_j \in \mathcal{N}$ as $j\rightarrow \infty$. It is a contradiction.
\end{proof}
To prove that $\mathcal{N}$ is an open set, we need to establish the monotonicity of the solution on $(0,R(a))$.
\begin{lemma}\label{le3.6}
		Assume that   $N\ge 1$, $p>2$, $p-1<q<\frac{Np}{(N-p)_+}-1$.  Then for $a \in \mathcal{N} \bigcup{\mathcal{N} _0}$,  $u'(r,a)<0$ for all $r \in (0,R(a))$.
	\end{lemma}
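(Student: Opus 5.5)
The plan is to combine the energy functional $E(r,a)$ from \eqref{3-8} with the integral representation \eqref{3-13}. For $a\in\mathcal N\cup\mathcal N_0$ we have $R(a)<\infty$ and $u(R(a),a)=0$. Hence
$$E(R(a),a)=\frac{B(p-1)}{p}|u'(R(a),a)|^p\ge 0 .$$
By \eqref{eq3-2} the energy is non-increasing, so $E(r,a)\ge 0$ for all $r\in[0,R(a)]$. In particular $E(0,a)=g(a)\ge 0$. Since $g<0$ on $(0,u^*]$, this forces $a>u^*$. Then \eqref{3-11} gives $u'<0$ on a right neighbourhood of $0$.

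Next, argue by contradiction. Suppose $u'$ vanishes somewhere in $(0,R(a))$, and let $r_1$ be the first such point. Then $u'<0$ on $(0,r_1)$ and $u'(r_1,a)=0$. Two facts about $r_1$ follow.

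\begin{itemize}
\item \emph{$u(r_1)<u^*$.} From \eqref{3-13},
$$B r^{N-1}|u'|^{p-2}u' = \int_0^r\Bigl(\frac1m-\chi u^q\Bigr)s^{N-1}ds .$$
The left side is negative on $(0,r_1)$ and zero at $r_1$, so the integrand must be positive just before $r_1$. Since $u$ is decreasing there, this means $u(r_1)<u^*$.
\item \emph{$r_1$ is a local minimum.} Suppose $u(r_1)=u^*$ with $u'(r_1)=0$. Uniqueness for the initial value problem posed at $r_1$ (valid since $u^*>0$, away from the singular point $r=0$) would give $u\equiv u^*$, contradicting $u(0)=a>u^*$. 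So $u(r_1)<u^*$ strictly. Then near $r_1$ we have $(r^{N-1}|u'|^{p-2}u')'=r^{N-1}(\frac1m-\chi u^q)/B>0$, so $r_1$ is a strict local minimum. A degenerate ``inflection with $u'=0$'' is excluded the same way.
\end{itemize}

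Now evaluate the energy at $r_1$:
$$E(r_1,a)=g(u(r_1)) \quad\text{with } 0<u(r_1)<u^*,$$
since $r_1<R(a)$ gives $u(r_1)>0$. Hence $E(r_1,a)<0$. But we showed $E\ge 0$ on $[0,R(a)]$, a contradiction. Therefore $u'<0$ on all of $(0,R(a))$.

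For $N=1$ the argument is the same: $E$ is constant, and $E(r_1)<0\le E(R(a))$ still gives a contradiction.

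The main obstacle I expect is the first bullet: showing that the first critical point $r_1$ genuinely satisfies $0<u(r_1)<u^*$. This is where \eqref{3-13} and uniqueness at the nondegenerate point $r_1>0$ are needed, together with care about the degeneracy of the $p$-Laplacian where $u'=0$. The degeneracy is handled by working with the flux $r^{N-1}|u'|^{p-2}u'$, which is $C^1$, rather than with $u''$.

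Note that the hypothesis on $q$ is used only through the fact, supplied by the earlier lemmas, that $\mathcal N\cup\mathcal N_0$ is the relevant set. The monotonicity argument itself relies only on the energy.
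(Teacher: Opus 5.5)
Your proof is correct and is essentially the paper's proof. The paper gets $a>u^*$ by citing Lemma~\ref{le3.5}, whose proof is precisely your observation $g(a)=E(0,a)\ge E(R(a),a)\ge 0$. It then applies \eqref{3-13} at a critical point $r^*\in(0,R(a))$ to get $0<u(r^*,a)<u^*$, hence the contradiction $0>E(r^*,a)\ge E(R(a),a)\ge 0$. Your choice of the \emph{first} critical point is what makes that \eqref{3-13} step rigorous.

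Your second bullet is unnecessary: strict monotonicity on $(0,r_1]$ already gives $u(r_1)<u^*$ strictly in the first bullet. Its appeal to IVP uniqueness at $r_1$ would also not follow from Lipschitz theory as you suggest, since for $p>2$ the flux system is not Lipschitz where $u'=0$; it would need an energy argument instead.
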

\begin{proof}
By Lemma \ref{le3.5}, we see that $a>u^*$ if $a \in \mathcal{N} \bigcup{\mathcal{N} _0}$. Then by \eqref{3-11}, $u\left( r,a \right)$ attains its local maximum at $r=0$. That is,  $u\left( r,a \right)$ decreases monotonically for small $r$. We now claim that $u'(r,a)<0$ for all $r \in (0,R(a))$. Suppose, for contradiction, that there exists $r^*\in (0,R(a))$ such that $u'(r^*,a)=0$. By \eqref{3-13}, we obtain that $0<u(r^*,a)<u^*$. Due to the monotonicity of $E(r,a)$ and \eqref{3-8}, we see that
     $$0>E(r^*,a)\geq E(R(a),a)\geq 0,$$
     which is a contradiction.
\end{proof}
\begin{lemma}\label{le3.7}
		Assume that $N\ge 1$, $p>2$, $p-1<q<\frac{Np}{(N-p)_+}-1$. Then $\mathcal{P}$ and $\mathcal{N}$ are open subsets of $\left( 0,\infty \right)$.
	\end{lemma}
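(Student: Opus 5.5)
The plan is to prove openness of each set separately. The tools are the continuous dependence of $u(\cdot,a)$ on $a$ (uniformly on compact $r$-intervals, in $C^1$), together with the energy $E(r,a)$ from \eqref{3-8} and its monotonicity \eqref{eq3-2}. Continuous dependence follows from the integral representation and Lemma \ref{le3.2}. For $p>2$ the map $s\mapsto |s|^{\frac{2-p}{p-1}}s$ is continuous, so limits of solutions are solutions and the uniqueness in Lemma \ref{le3.1} gives convergence of the whole family. I will take this as standard.

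\emph{Openness of $\mathcal N$.} Fix $a_0\in\mathcal N$ and set $R_0=R(a_0)<\infty$, so $u(R_0,a_0)=0$ and $u'(R_0,a_0)<0$. Then $u(R_0+\delta,a_0)<0$ for small $\delta>0$. By Lemma \ref{le3.6}, $u'(\cdot,a_0)<0$ on $(0,R_0]$, so $u(\cdot,a_0)\ge u(R_0-\delta,a_0)>0$ on $[0,R_0-\delta]$. By $C^1$ continuous dependence on $[0,R_0+\delta]$, for $a$ near $a_0$ three things hold: $u(\cdot,a)>0$ on $[0,R_0-\delta]$, $u(R_0+\delta,a)<0$, and $u'(\cdot,a)<0$ on $[R_0-\delta,R_0+\delta]$. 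Hence $u(\cdot,a)$ has exactly one zero $R(a)\in(R_0-\delta,R_0+\delta)$, and $u'(R(a),a)<0$. So $a\in\mathcal N$ and $\mathcal N$ is open.

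\emph{Openness of $\mathcal P$.} This is the main obstacle, because $R(a)=\infty$ is a condition on an infinite interval, so continuous dependence alone does not suffice. The plan is to use energy trapping. First show that for $a_0\in\mathcal P$ there is $r_0$ with $E(r_0,a_0)<0$.
\begin{itemize}
\item If $a_0\le u^*$, then already $E(0,a_0)<0$ (for $a_0\in(0,u^*]$ one has $E(0,a_0)=g(a_0)<0$), as in Lemma \ref{le3.5}.
\item If $a_0>u^*$ and $u(\cdot,a_0)$ has a critical point $r_1$ with $0<u(r_1,a_0)<u^*$, then $E(r_1,a_0)=g(u(r_1,a_0))<0$. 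By \eqref{3-13} and the argument of Lemma \ref{le3.4}, the first positive critical point has $u<u^*$ and stays positive since $a_0\in\mathcal P$.
\item If $a_0>u^*$ and $u(\cdot,a_0)$ is monotone, then $u\to u^*$ as in \eqref{3-12}. Since $u'\to0$ as well (from \eqref{3-13}), $E(r,a_0)\to g(u^*)<0$.
\end{itemize}
In every case some $r_0$ with $E(r_0,a_0)<0$ exists. By continuous dependence on $[0,r_0]$, for $a$ near $a_0$ we get $u(\cdot,a)>0$ on $[0,r_0]$ and $E(r_0,a)<0$. Monotonicity of $E$ then gives $E(r,a)<0$ for all $r\ge r_0$. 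At a first zero $R$ we would have $E(R,a)=\frac{B(p-1)}{p}|u'|^p\ge0$, which is impossible. Hence $R(a)=\infty$ and $\mathcal P$ is open.

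The delicate point is the third case when $N=1$ or $p\ge N$. There Lemma \ref{le3.4} says the monotone case cannot occur, so the second case always applies. For $2<p<N$, the limit $u\to u^*$ still yields $E\to g(u^*)<0$. Either way the trapping argument goes through.
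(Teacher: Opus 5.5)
Your proof is correct. Its core is the paper's own. For $\mathcal N$ you perturb around the transversal zero $R(a_0)$. For $\mathcal P$ you find a radius $r_0$ with $E(r_0,a_0)<0$ and trap nearby solutions using continuous dependence and \eqref{eq3-2}. The paper does exactly this in its non-monotone case, using the first minimum point $r_1(a)$.

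\textbf{Where you differ: the monotone case.} This is the case $a_0>u^*$ with $u(\cdot,a_0)$ decreasing on all of $(0,\infty)$, which Lemma \ref{le3.4} excludes only when $p\ge N$.
\begin{itemize}
\item The paper asserts $u'(r,a)<-2\varepsilon$ on all of $(0,\infty)$ and then appeals to openness of $\mathcal N$ at a nearby $b$. That bound is impossible: $u'(0,a)=0$, and a bounded monotone $u$ has $\liminf_{r\to\infty}|u'|=0$.
\item The choice $\delta=\delta_1/4$ there is also circular, because $\delta_1$ depends on $b$, which depends on $\delta$.
\item Your observation that $E(\cdot,a_0)$ eventually becomes negative folds this case into the same trapping argument. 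Your uniform treatment also covers $a_0=u^*$, which the paper skips.
\item For $\mathcal N$, you require $u'<0$ only on $[R_0-\delta,R_0+\delta]$ plus positivity on $[0,R_0-\delta]$. This is the correct form of the paper's claim that $u'(r,a)<-2\varepsilon$ on all of $(0,\varrho)$, which fails near $r=0$.
\end{itemize}

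\textbf{A side benefit.} Your argument never uses $q<\frac{Np}{(N-p)_+}-1$. You don't even need Lemma \ref{le3.6}: positivity on $[0,R_0-\delta]$ follows from the definition of $R(a_0)$ and compactness. So openness of both sets holds for every $q>p-1$. Subcriticality enters only through Lemma \ref{le3.5}, that is, through nonemptiness of $\mathcal N$.

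\textbf{One step needs a different justification.} The claim $u'\to 0$ does not follow from \eqref{3-13}: an integrand tending to zero there only gives $|u'|^{p-1}=o(r)$. Two simple fixes work:
\begin{itemize}
\item Since $u(\cdot,a_0)$ decreases from $a_0$ to $u^*$, $\int_0^\infty|u'(s,a_0)|\,ds=a_0-u^*<\infty$. So there are $r_k\to\infty$ with $u'(r_k,a_0)\to0$, hence $E(r_k,a_0)\to g(u^*)<0$, and any $r_k$ with $E(r_k,a_0)<0$ can serve as $r_0$.
\item Alternatively, $E(\cdot,a_0)$ and $g(u(\cdot,a_0))$ both converge, so $|u'|^p$ converges. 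Its limit must be $0$, since otherwise $u$ would be unbounded.
\end{itemize}
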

\begin{proof}
        First, we consider $a\in \mathcal{N}$. By the definition of $\mathcal{N}$ and Lemma \ref{le3.6}, for a given $a$, there exists $\varrho >R\left( a \right)$ and $\varepsilon >0$ depending on $a$ such that for all $r\in \left( 0,\varrho \right)$, we have
        $$u\left( \varrho ,a \right) <0 \quad \text{and} \quad u^{\prime}\left( r,a \right) <-2\varepsilon .$$
		By continuity, there exists $\delta \in \left( 0,a \right)$ such that for every $b\in \left( a-\delta ,a+\delta \right)$ and all $r\in \left( 0,\varrho \right)$, we have that
		$$
		u\left( \varrho ,b \right) <0 \quad \text{and} \quad u^{\prime}\left( r,b \right) <-\varepsilon.
		$$
		Since $u\left( 0,b \right) =b>0$ and $u\left( \varrho ,b \right) <0$,  we directly deduce that for each  $b\in \left( a-\delta ,a+\delta \right)$, we have
$$R\left( b \right) \in \left( 0,\varrho \right)\quad \text{with}\quad  u^{\prime}\left( R\left( b \right) ,b \right) <-\varepsilon <0.$$
		Therefore, $\left( a-\delta ,a+\delta \right)\subset \mathcal{N}$. That is, $\mathcal{N}$  is an open set in $\left( 0,\infty \right).$

       Next, we consider $a\in \mathcal{P}$. Here, we restrict our attention to the case $a > u^*$, and show that every such $a$ is an interior point of $\mathcal{P}$.

       (i) When $2<p<N$  with $q<\frac{Np}{N-p}-1$, if $u(r)$ is  monotonically decreasing and bounded, then $u(r,a)> u^*$ for $r \in [0, \infty)$ from \eqref{3-3}.
       For \eqref{3-13} and a given $a$,  there exists $\varepsilon>0$ depending on $a$ such that
       \begin{align}\label{3-28}
				&u'\left( r,a \right) <-2\varepsilon, \,\,\text{for\, all}\, \,r\in ( 0, \infty).
			\end{align}
       We now claim that $\mathcal{P}$ is an open set in $\left( 0,\infty \right)$. That is, there exists $\delta >0$ such that for each $b\in \left( a-\delta ,a+\delta \right)$, we have $R(b)=\infty$.
       Assume for contradiction that for any $\delta >0$, there exists some $b\in \left( a-\delta ,a+\delta \right)$ with $R(b)<\infty$. By continuity and \eqref{3-28}, we obtain that
       $$u'\left( r,b \right) <-\varepsilon,\,\, \text{for}\,\, r\in (0, R(b)],$$ which implies $b\in \mathcal{N}$.
       Since $\mathcal{N}$ is open in $\left( 0,\infty \right)$, then there exist $\delta_1 >0$ such that $\left( b-\delta_1 , b+\delta_1 \right)\subset \mathcal{N}$. Taking $\delta=\frac{\delta_1}{4}$, we have $a \in \left( b-\delta_1 , b+\delta_1 \right)$, meaning $a \in \mathcal{N}$. This contradicts the assumption that $a\in \mathcal{P}$.

       If $u(r)$ is non-monotonic, then let $r_1$ be the first minimum point, which satisfies $u'\left( r_1\left( a \right) ,a \right)=0$ and $u\left( r_1\left( a \right) ,a \right) \in (0, u^*)$ (otherwise, $u'\left( r_1\left( a \right), a\right)<0$ for \eqref{3-13}). Then for all $r\in \left[ 0,r_1\left( a \right) \right]$, we have
       $$u\left( r,a \right) \geq u\left( r_1\left( a \right) ,a \right) \,\,\text{and}\,\, E(r_1( a ),a) <0.$$
       Depending on continuity, there exists $\delta \in (0,a)$ such that for every $b\in \left( a-\delta ,a+\delta \right)$ and all $r \in (0,r_1(a))$, we know
        $$u\left( r ,b \right) \geq \frac{u( r_1( a ) ,a)}{2}, \quad
        u\left( r_1(a) ,b \right) \in (0,u^*), \quad  \text{and} \quad E(r_1(a),b)<0.$$
        Suppose, for contradiction, that there exists $ b\in \left( a-\delta ,a+\delta \right)$ such that $R\left( b \right) <\infty.$ From the definition and monotonicity of $E\left( r,b \right)$, we have
		$$
		0>E\left( r_1\left( a \right) ,b \right)\geq E\left( R\left( b \right) ,b \right) \geq 0,
		$$
		which is a contradiction. Therefore, it implies $\left( a-\delta ,a+\delta \right) \subset \mathcal{P}$.

       (ii)When $p\geq N$, Lemma \ref{le3.4} implies that $u\left( r,a \right) \geq u\left( r_1\left( a \right) ,a \right) \in \left( 0,u^{*} \right)$ for $r\in \left[ 0,r_1\left( a \right) \right]$ and $E(r_1( a ),a) <0$. Following an argument analogous to the non-monotonic case in (i), we readily conclude that $\mathcal{P}$ is an open set in $\left( 0,\infty \right)$.
\end{proof}
Combining the Lemma \ref{le3.4} to Lemma \ref{le3.7}, we obtain the following conclusion and prove that $\mathcal{N}_0$ is nonempty.
\begin{lemma}\label{le3.8}
		Assume that $N\ge 1$, $p>2$, $p-1<q<\frac{Np}{(N-p)_+}-1$. Then for the initial value problem \eqref{3-3}, there exist $a_2\geq a_1>u^{*}$ such that
\begin{itemize}
        \item when $0<a<a_1$, then $u\left( r,a \right) >0,\ for \ r\in \left[ 0,\infty \right)$. In particular, $\lim\limits_{r \to \infty} u(r)=u^*$ if $N\ge 2$; $\lim\limits_{r \to \infty} u(r)$ does not exist if $N=1$.

          \item when $a_1\leq a \leq a_2$, then there exists at least one point $a_c \in [a_1,a_2]$  such that there is $R\left( a_c \right) >0$ such that
		\begin{align*}
			\left\{
			\begin{aligned}
				&	u\left( R\left( a_c \right) ,a_c \right) =0,\\
				&u^{\prime}\left( R\left( a_c \right) ,a_c \right) =0, \\
				&u\left( r,a_c \right) >0,\,\, u'\left( r,a_c \right)<0, \quad r\in \left[ 0,R\left( a_c \right) \right);
			\end{aligned}\right.
		\end{align*}

\item if $a>a_2$, then there is $R\left( a \right) >0$ such that
		\begin{align*}
			\left\{
			\begin{aligned}
				&u\left( R\left( a\right) ,a\right) =0,\\
				&u^{\prime}\left( R\left( a \right) ,a \right) <0, \\
				&u\left( r,a \right) >0,\,\, u'\left( r,a \right)<0, \quad r\in \left[ 0,R\left( a \right) \right) .
			\end{aligned}\right.
		\end{align*}
  \end{itemize}
Further, when $2<p<N$ with $q<\frac{Np}{N-p}-1$, $a_c$ is unique such that $a_c=a_1=a_2$. When $N=1$, $a_c$ is unique and $a_c=a_1=a_2=\left(\frac{q+1}{m\chi}\right)^{\frac1{q}}$	
	\end{lemma}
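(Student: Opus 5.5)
The plan is to run the shooting argument on the trichotomy $(0,\infty)=\mathcal P\cup\mathcal N\cup\mathcal N_0$ and then add the uniqueness and explicit statements.

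\emph{Definition of the thresholds.} Set
$$a_1:=\sup\{a>0:\ (0,a)\subset\mathcal P\},\qquad a_2:=\inf\{a>0:\ (a,\infty)\subset\mathcal N\}.$$
By Lemma \ref{le3.5}, $(0,u^*]\subset\mathcal P$ and $(a_2,\infty)\subset\mathcal N$ for some $a_2>u^*$. Hence $a_1\ge u^*$ and $a_2<\infty$. Since $\mathcal P$ is open (Lemma \ref{le3.7}), $a_1>u^*$ and $a_1\notin\mathcal P$. Since $\mathcal N$ is open, $a_2\notin\mathcal N$, and clearly $a_1\le a_2$.

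\emph{Existence of $a_c$.} Suppose $a_1\in\mathcal N$. Then a neighbourhood of $a_1$ lies in $\mathcal N$. But $(a_1-\epsilon,a_1)\subset\mathcal P$, which is impossible. So $a_1\in\mathcal N_0$, and we may take $a_c=a_1$. Lemma \ref{le3.6} gives $u'<0$ on $(0,R(a_c))$, so every bullet of the lemma holds.

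\emph{The case $a<a_1$.} Here $a\in\mathcal P$, so $u>0$ globally. For $N\ge2$, $E$ is nonincreasing and bounded below. If $u$ is eventually monotone, then $u\to u^*$ directly from \eqref{3-13}. If $u$ oscillates, we use $\int^\infty r^{-1}|u'|^p\,dr<\infty$, and I expect a standard argument to show that the amplitude of the oscillation collapses, giving $u\to u^*$. This is the first delicate point. For $N=1$, $E$ is constant and $E(0,a)\neq g(u^*)$ when $a\ne u^*$, so the oscillation has constant amplitude (Lemma \ref{le3.4}) and the limit does not exist.

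\emph{Uniqueness and the case $N=1$.}
\begin{itemize}
\item For $N=1$, $E$ is conserved. The condition $a\in\mathcal N_0$ means $u=u'=0$ at $R$, so $E(0,a)=0$, that is, $\frac{\chi}{q+1}a^{q+1}=\frac am$. This gives $a_c=\left(\frac{q+1}{m\chi}\right)^{1/q}$ uniquely.
\item Also for $N=1$: if $a<a_c$ then $E<0$, which forces $a\in\mathcal P$. If $a>a_c$ then $E>0$, and at the first zero $|u'|^p>0$, so $a\in\mathcal N$ provided a zero exists. Existence of the zero follows from the phase-plane picture, because the level set of $E$ crosses $u=0$. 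Hence $a_1=a_2=a_c$.
\item For $2<p<N$ with $q<\frac{Np}{N-p}-1$, uniqueness of the compactly supported solution of \eqref{3-10} is what we need. I plan to cite the known uniqueness results for radial solutions of $\Delta_p u+f(u)=0$ with free boundary data (Serrin–Tang / Pucci–Serrin type). Uniqueness of $a_c$ then gives $\mathcal N_0=\{a_c\}$.
\item Still for $2<p<N$: $\mathcal P$ and $\mathcal N$ are disjoint open sets whose union is $(0,\infty)\setminus\{a_c\}$. Each interval $(0,a_c)$ and $(a_c,\infty)$ is connected, so each lies entirely in one of the two sets. Since $(0,u^*]\subset\mathcal P$ and large $a$ lie in $\mathcal N$, we get $(0,a_c)\subset\mathcal P$ and $(a_c,\infty)\subset\mathcal N$. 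Thus $a_1=a_2=a_c$.
\end{itemize}

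\emph{Main obstacle.} The hardest step is this uniqueness for $2<p<N$. The nonlinearity $f(u)=\chi u^q-\frac1m$ must satisfy the hypotheses of the uniqueness theorem; the relevant condition is monotonicity of $uf'(u)/f(u)$ on the positivity set. I would first check those hypotheses for this specific $f$ and then invoke the theorem. Failing that, I would compare two solutions via a Pohozaev/energy identity on their supports.
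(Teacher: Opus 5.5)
Your proposal is correct and follows essentially the paper's route: the trichotomy $\mathcal P\cup\mathcal N\cup\mathcal N_0=(0,\infty)$ with openness from Lemmas \ref{le3.5}--\ref{le3.7} and connectedness to get $\mathcal N_0\neq\emptyset$, energy conservation to pin down $a_c=\left(\frac{q+1}{m\chi}\right)^{1/q}$ when $N=1$, and a cited uniqueness theorem for radial solutions of the free boundary problem \eqref{3-10} when $2<p<N$. Your explicit choice $a_1=\sup\{a:(0,a)\subset\mathcal P\}$, $a_2=\inf\{a:(a,\infty)\subset\mathcal N\}$ is cleaner than the paper's corresponding steps: it yields $a_1\in\mathcal N_0$ directly, whereas the paper only specifies $a_1,a_2$ implicitly. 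Likewise, your use of $\int^\infty r^{-1}|u'|^p\,dr<\infty$ to get $u\to u^*$ when $N\ge 2$ improves on the paper, which merely asserts that decaying energy forces the oscillation amplitude to vanish.
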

	\begin{figure}[htbp]
		\centering
		\includegraphics[scale=0.4]{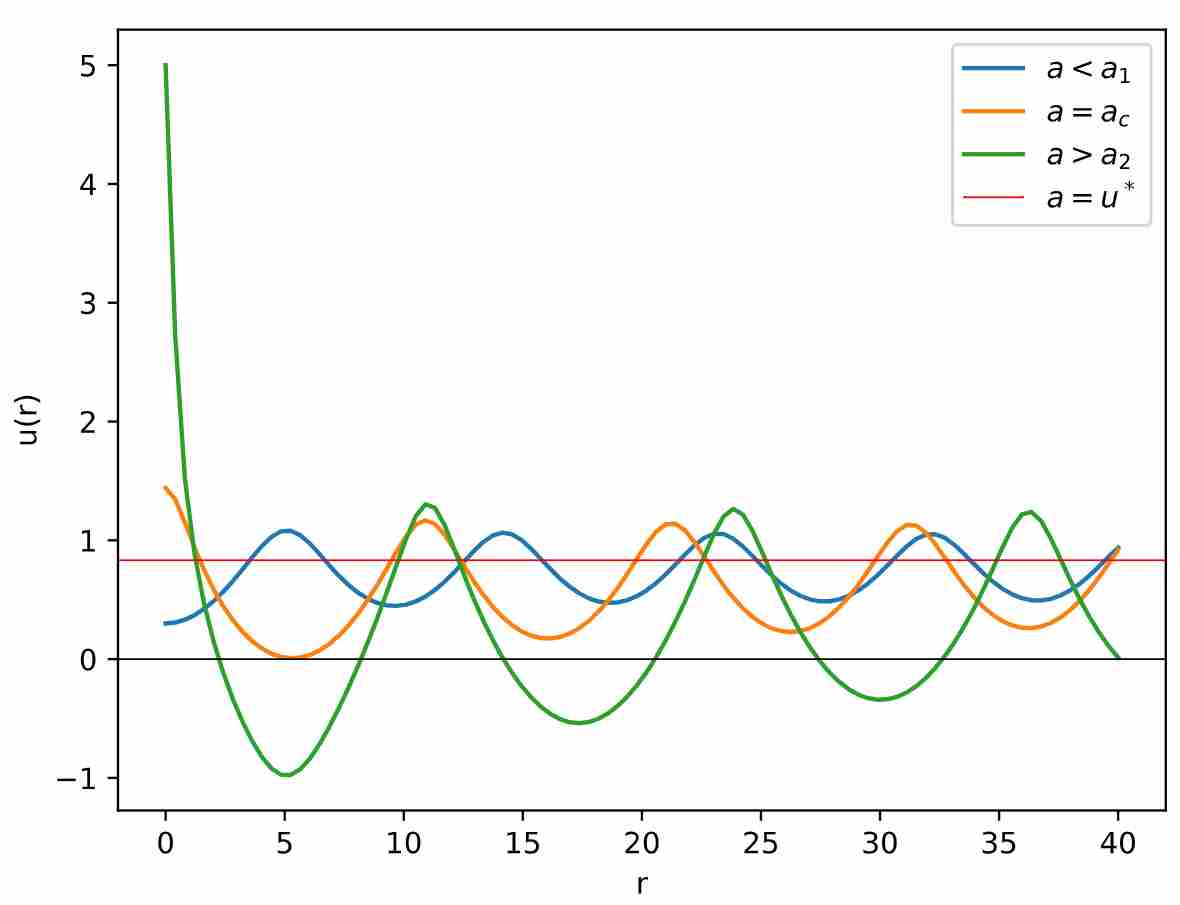}
		\caption{ Behaviour of $u(r, a )$ for $a<a_1$, $a=a_c$ and $a>a_2$ with $p\geq N>1$. }
		\label{fig2:img}
	\end{figure}	
\begin{proof}
First, we claim that $\mathcal{N} _0 \ne \emptyset$. Assume for a contradiction that $\mathcal{N} _0 =\emptyset$, then
$$\mathcal{P}\cup \mathcal{N} =(0,\infty) \quad \text{and} \quad \mathcal{P}\cap \mathcal{N} =\emptyset.$$ Since $(0,\infty)$ is a connected topological space, then it cannot be partitioned into two non-empty disjoint open sets, which is contradict with $\mathcal{P}\cup \mathcal{N} =(0,\infty)$.

When $N=1$, the proof of Lemma \ref{le3.2} shows that $u$ oscillates. From \eqref{eq3-2}, we see that the energy $E(r,a)$ is  conserved, which leads to a constant oscillation amplitude, so that $a_c$ is uniquely determined by $a_c=\left(\frac{q+1}{m\chi}\right)^{\frac1{q}}$, and if
$a<a_c$, the limit of $u$ does not exist as  $r\to\infty$; if $a>a_c$, the solution $u$ will reach $0$ at a finite position $R(a)$ such that $u^{\prime}\left( R\left( a \right) ,a \right) <0$.

When $N\geq 2$, since $u$ is bounded, then $$\underset{r\rightarrow \infty}{\lim}\,\,u \left( r \right) =u^* .$$
In fact, if $u$ is eventually monotone, i.e., if there exists $R > 0$ such that $u$ is monotone for $r > R$, then from equation \eqref{3-3}, it is clear that
$u(r)\to u^*$, as $r\to\infty$.
Otherwise, if $u$ oscillates, the proof of Lemma \ref{le3.2}
shows that when the decreasing energy $E(r,a)$ implies a decay in the oscillation amplitude, ultimately leading to convergence to $u^*$.

 From Lemma \ref{le3.5} to Lemma \ref{le3.7}, we obtain that there exists $a_2\geq a_1>u^*$ such that $(a_2, \infty) \subset \mathcal{N}$ and $u'(r,a)<0$ for all $r\in (0,R(a))$. Since $\mathcal{N} _0 \ne \emptyset$, then there exists at least one $a_c\in [a_1, a_2]$ such that $a_c \in \mathcal{N} _0$. By Lemma \ref{le3.6}, for $a_c \in \mathcal{N} _0$, we have $u'(r,a_c)<0$ for all $r\in (0,R(a_c))$.

From \cite{39}, we know that when $2<p<N$ with $q<\frac{Np}{N-p}-1$, \eqref{3-11} has a unique radial solution, which implies that $a_c$ is unique. Then $\mathcal{N} _0 = \{a_c\}$.  Since
that $\mathcal{P}$ and $\mathcal{N}$ are open sets of $(0, +\infty)$ with $\left( 0,a_1 \right) \subset\mathcal{P}$ and $\left( a_2,\infty \right) \subset \mathcal{N}$ $\left( 0,\infty \right)$, we quickly derive that $\mathcal{P} =\left( 0,a_c \right)$ and $\mathcal{N} =\left( a_c,\infty \right)$.

Finally, we conclude that when $2<p<N$ with $q<\frac{Np}{N-p}-1$, or $N=1$, we have $$\mathcal{P} =\left( 0,a_c \right),\,\,\mathcal{N} _0=\left\{ a_c \right\}, \,\, \mathcal{N} =\left( a_c,\infty \right).$$
	\end{proof}

\textbf{ \textit{Proof of Proposition \ref{pro-1}.}} From Lemma \ref{le3.8}, we see that \eqref{3-3} has no positive radial ground state.

(i) When  $q<\frac{Np}{N-p}-1$ with $2<p<N$, or $N=1$, $\mathcal{P} =\left( 0,a_c \right)$, $\mathcal{N}_0=\{a_c\}$, and $\mathcal{N} =\left( a_c,\infty \right)$.

(ii) When $p\ge N$, there exists $a_2\ge a_c\ge a_1>u^*$ such that $(0, a_1)\subset \mathcal{P}$, $(a_2, \infty) \subset \mathcal{N}$,  $a_c \in \mathcal{N} _0$.

(iii)When $q \ge \frac{Np}{N-p}-1$ with $2<p<N$, from the result of \cite{39}, \eqref{3-3} has no positive radial ground state and \eqref{3-11} has no radial solution. Following an argument completely analogous to the proofs of Propositions 2.5 and 2.6 in \cite{39}, we conclude that $u$ cannot reach zero. Thus, $\mathcal P=(0, +\infty)$ since $u$ is bounded. Additionally, similar to the proof in Lemma \ref{le3.8}, we obtain the asymptotic behavior
$$\lim_{r\to+\infty}u(r)=u^*.$$
The proof of Proposition \ref{pro-1} is complete. \hfill $\Box$

\subsection{The fast diffusion case $1<p\leq2$}	

	First, we consider the case $1<p<2$. Our objective is to find a solution such that $\underset{r\rightarrow R}{\lim}\,\,\phi \left( r \right) =0$ for some $R>0$, or $\underset{r\rightarrow \infty}{\lim}\,\,\phi \left( r \right) =0$. Since $\frac{p-2}{p-1}<0$, this is equivalent to finding a solution to problem \eqref{3-4} that satisfies  $\underset{r\rightarrow R}{\lim}\,\,u \left( r \right) =+\infty $, or $\underset{r\rightarrow \infty}{\lim}\,\,u \left( r \right) =+\infty $ from \eqref{3-1}. We will show that there is no such solution.

	\begin{lemma}\label{le3.9}
		Assume that $N\geq1, 1<p<2, q<0$. Let $u(r)$ be a classical solution of \eqref{3-4}, and $\left( 0,R_{max}(a) \right)$ be the maximal existence interval of the solution. Then $u(r)$ has upper bound on $\left( 0,R_{max}(a) \right)$.
		\begin{proof}
To derive an upper bound for $u$, we consider the corresponding energy functional for any $r\in \left[ 0,R_{\max}(a) \right)$.
When $p=\frac{2N+1}{N+1}\in (1,2)$, we observe that $q=-1.$
Therefore, we construct the energy functional in two cases.			
When $q\ne-1$, denote
			\begin{align}\label{3-29}
				&E\left( r, a \right): =\frac{B\left( p-1 \right)}{p}\left| u^{\prime}\left( r,a \right) \right|^p- \frac{\chi}{q+1}|u\left( r,a \right) |^{q+1}+\frac{1}{m}u\left( r,a \right).
			\end{align}
			While $q=-1$,  denote
			\begin{align}\label{3-30}
				&E\left( r, a \right): =\frac{B\left( p-1 \right)}{p}\left| u^{\prime}\left( r ,a\right) \right|^p-\chi \ln |u\left( r,a \right)|+\frac{1}{m}u\left( r,a \right).
			\end{align}
A direct calculation by \eqref{3-4} and \eqref{3-29}(or, \eqref{3-30}) yields
\begin{align*}
				&\frac{dE}{dr}(r, a) =-\frac{B\left( N-1 \right)}{r}\left| u^{\prime}\left( r ,a\right) \right|^p\leq 0,
\end{align*}
then $E(r,a)$ is decreasing on $\left( 0,R_{max}(a) \right)$. That is, for any $r \in \left( 0,R_{max}(a) \right)$, we have
\begin{align}\label{3-31}
				&E(r,a)\leq E(0,a).
\end{align}
By \eqref{3-29}, \eqref{3-30} and \eqref{3-31}, it is not difficult to infer that $u(r)$ has upper bound on $\left( 0,R_{max} (a)\right)$ since  $q<0$.
		\end{proof}
	\end{lemma}
	 Next, we consider the case $p=2$. We expect to find a global nonnegative solution of the problem \eqref{2-7} such that $\underset{r\rightarrow \infty}{\lim}\,\,\phi \left( r \right) =0$ or $\underset{r\rightarrow R_{max}}{\lim}\,\,\phi \left( r \right) =0$. By \eqref{3-5}, this is equivalent to seek a global solution of \eqref{3-6} such that $\underset{r\rightarrow \infty}{\lim}\,\,u\left( r \right) =-\infty$ or $\underset{r\rightarrow R_{max}}{\lim}\,\,u\left( r \right) =-\infty$ . We now prove that no such solution exists. That is,
\begin{lemma}\label{le3.10}
		Assume that $A>0, \chi>0, m>0, N\geq1, p=2$. Let $u\left( r \right)$ be a classical solution of \eqref{3-6}, and $\left( 0, R_{max} (b)\right)$ be the maximal existence interval of the solution. Then $R_{max}(b)=\infty$ and  $u(r)$ oscillates around $u_*$ on $(0, \infty)$, such that  when $N=1$, $\lim\limits_{r \to \infty} u(r)$ does not exist; when $N\geq 2$, 	$\underset{r\rightarrow \infty}{\lim}\,\,u \left( r \right) =u_*,$
where $u_*= \frac{1}{m}\ln \frac{1}{\chi m}$.
		\end{lemma}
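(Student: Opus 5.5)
We will treat \eqref{3-6} as a damped conservative oscillator around the stationary solution $u_*$, in the same spirit as Lemmas \ref{le3.2} and \ref{le3.4}. Set
$$E(r,b):=\frac12|u'(r)|^2+\frac{\chi}{m}e^{mu(r)}-\frac{1}{m}u(r),$$
which satisfies $\frac{dE}{dr}=-\frac{N-1}{r}|u'|^2\le 0$ along solutions of \eqref{3-6}. The potential $G(u)=\frac{\chi}{m}e^{mu}-\frac{u}{m}$ is strictly convex. It attains its minimum at $u_*$, and $G(u)\to+\infty$ both as $u\to+\infty$ (exponentially) and as $u\to-\infty$ (linearly). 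Since $G(u(r))\le E(r,b)\le E(0,b)=G(b)$, the solution $u$ stays in the bounded sublevel interval $\{G\le G(b)\}$, and $u'$ is bounded as well. By Lemma \ref{le3.1} this gives $R_{max}(b)=\infty$. It also rules out $u\to-\infty$, so $\phi=e^u$ has a positive lower bound.

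Next we will show oscillation about $u_*$. If $b>u_*$, then $(ru')'/r$ equals $\frac1m-\chi e^{mb}<0$ near $0$, so $u$ starts out decreasing. From
$$r^{N-1}u'(r)=\int_0^r\Big(\frac1m-\chi e^{mu}\Big)s^{N-1}ds,$$
any critical point $r^*>0$ is a local minimum exactly when $u(r^*)<u_*$, and a local maximum exactly when $u(r^*)>u_*$.

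Suppose $u$ were eventually monotone. Then $u$ is bounded and converges to some $\ell$, and the equation forces $\ell=u_*$. Set $v=u-u_*$. It satisfies
$$(r^{N-1}v')'+r^{N-1}F(r)v=0,\qquad F=\frac{\chi(e^{mu}-e^{mu_*})}{u-u_*}\to \chi m e^{mu_*}=1>0.$$
We then apply a linear version of Lemma \ref{le3.3}, with $p=2$ and $\mu=2$: Sturm comparison with $v''+\frac{N-1}{r}v'+\frac12 v=0$, whose solutions (Bessel-type) oscillate for every $N$. This shows $v$ must change sign infinitely often, a contradiction. We expect this step to be the main obstacle. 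Lemma \ref{le3.3} requires $\int^\infty r^{-(N-1)}dr=\infty$, which fails for $N\ge3$, so instead we will use the classical fact that $v''+\frac{N-1}{r}v'+Fv=0$ with $F\ge c>0$ is oscillatory, via the Liouville substitution $w=r^{(N-1)/2}v$ and Sturm comparison.

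It follows that there are alternating maxima above $u_*$ and minima below $u_*$, and hence $u$ oscillates around $u_*$.

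For $N=1$ the energy $E$ is conserved. Every maximum value $M$ and every minimum value $\mu$ satisfy $G(M)=G(\mu)=G(b)$, so the extrema are fixed, and for $b\ne u_*$ the limit $\lim_{r\to\infty}u(r)$ does not exist. For $N\ge2$, $E$ is nonincreasing and bounded below by $G(u_*)$, so $E\to E_\infty$. We will show $E_\infty=G(u_*)$. If $E_\infty>G(u_*)$, the oscillation amplitude stays bounded away from zero. In that case $|u'|$ is bounded below on a set of $r$ whose measure per period is comparable to a constant, since the period is bounded by Sturm comparison because $F$ is bounded above and below on the range of $u$. Consequently $\int^\infty \frac{N-1}{r}|u'|^2dr=\infty$, which contradicts the boundedness of $E$. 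Hence $E\to G(u_*)$. Since $G$ has a strict minimum at $u_*$, this gives $u\to u_*$.
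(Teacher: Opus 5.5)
Your proposal is correct and follows the paper's skeleton. You use the same energy $E$ with $\frac{dE}{dr}=-\frac{N-1}{r}|u'|^2$ to get boundedness and $R_{max}(b)=\infty$. You rule out monotonicity by Sturm comparison for $v=u-u_*$, and you use energy conservation for $N=1$.

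\textbf{First difference: the non-monotonicity step.} The paper assumes $u$ is monotone on all of $(0,\infty)$. Then $u>u_*$ throughout, and convexity gives the global bound $F\ge\chi m e^{mu_*}$. It compares directly with the constant-coefficient radial (Bessel) equation $v''+\frac{N-1}{r}v'+\chi m e^{mu_*}v=0$. Later monotone tails are dismissed as ``similar'', although for a tail below $u_*$ the secant bound reverses and one needs $F\ge\chi m e^{m\min u}>0$ instead. Your version, with eventual monotonicity, the limit $F\to\chi m e^{mu_*}=1$ and the Liouville substitution, covers all tails at once. You are also right that Lemma \ref{le3.3} fails for $p=2$, $N\ge3$; the paper does not use it here either.

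\textbf{Second difference: the limit for $N\ge2$.} This is the more substantive point. The paper only shows that successive maxima strictly decrease and minima strictly increase. That yields limits $M_\infty\ge u_*\ge\mu_\infty$, not $M_\infty=\mu_\infty=u_*$, so the asserted convergence $u\to u_*$ is never actually derived. Your dissipation argument supplies exactly this missing step, and it can be made clean as follows.
\begin{itemize}
\item If $E_\infty>G(u_*)$, pick $\delta,\eta>0$ with $G(u_*\pm\delta)\le E_\infty-\eta$. Every extremum value lies outside $[u_*-\delta,u_*+\delta]$, so each half-oscillation crosses this whole band.
\item On the band, $|u'|\ge\sqrt{2\eta}$. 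Hence each crossing contributes $\int|u'|^2\,dr\ge\sqrt{2\eta}\int|u'|\,dr\ge2\delta\sqrt{2\eta}$.
\item Sturm comparison bounds the spacing of zeros of $v$ for large $r$. Only the positive lower bound on $F$ is needed, not the upper bound. So the crossing radii grow at most linearly, $\sum_k 1/r_k=\infty$, and $E\to-\infty$, which is a contradiction.
\end{itemize}
So your route costs one extra quantitative step but gives a complete proof of the $N\ge2$ limit.

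\textbf{Minor corrections.}
\begin{itemize}
\item $(ru')'/r$ should read $(r^{N-1}u')'/r^{N-1}$.
\item When classifying critical points, note that $u(r^*)=u_*$ with $u'(r^*)=0$ would force $u\equiv u_*$ by uniqueness, which is excluded when $b\ne u_*$.
\item Your restriction $b\ne u_*$ in the $N=1$ conclusion is genuinely needed. The lemma as stated fails for $b=u_*$.
\end{itemize}
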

		\begin{figure}[htbp]
			\centering
			\includegraphics[scale=0.4]{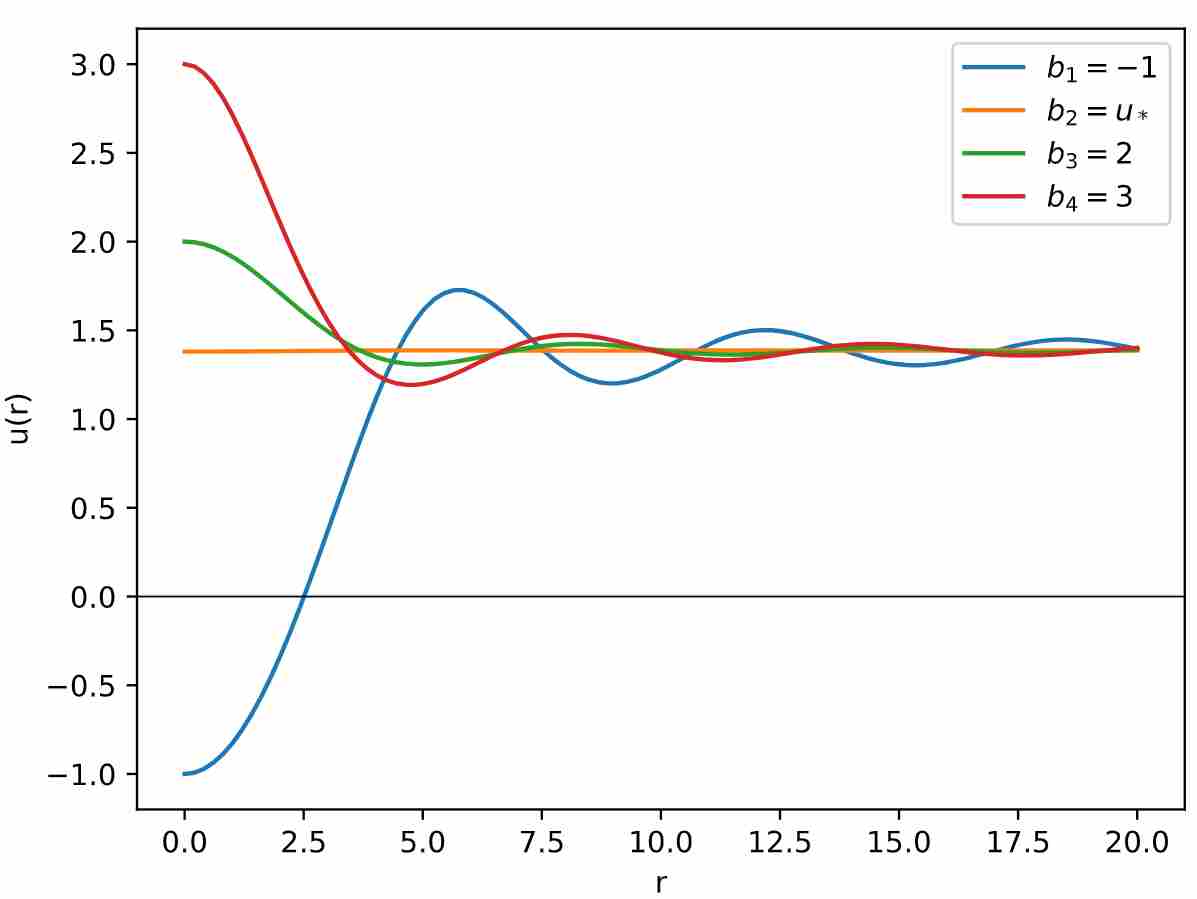}
			\caption{ $\chi=1$, $N = 4$, $m = \frac{1}{2}$, $p=2$, $u_*= 2\ln 2$. Various oscillating behaviours of  $u( \cdot , r)$.}
			\label{fig3:img}
		\end{figure}
    \begin{proof}
    First, we show $R_{max}(b)=\infty$. From Lemma \ref{le3.1}, it is necessary to  obtain the boundedness of $u(r)$. We consider the energy functional
    	\begin{align}
    		\label{3-32}
    		&E\left( r, b\right): =\frac{1}{2}|u^{\prime}\left( r,b \right) |^2+\frac{\chi}{m}e^{mu\left( r,b \right)}-\frac{1}{m}u\left( r,b \right).
    	\end{align}
    	A direct calculation by \eqref{3-6} and \eqref{3-32} yields
    	$$
    	\frac{dE}{dr}(r, b) =-\frac{\left( N-1 \right)}{r}|u^{\prime}\left( r, b \right)| ^2\leq 0.
    	$$
    Clearly,
    $$ E\left( r, b \right) \geq \frac{\chi}{m}e^{mu\left( r,b \right)}-\frac{1}{m}u\left( r,b \right) .$$
    Define
     $$f(u): =\frac{\chi}{m}e^{mu}-\frac{1}{m}u.$$
     It is easy to see that $f(u)$ attains its minimum at $u_*$, and
     $$f_{min}\left( u \right) =f\left( u_* \right) =\frac{\chi}{m}e^{mu_*}-\frac{1}{m}u_*,$$
    where $u_*= \frac{1}{m}\ln \frac{1}{\chi m}$ is the equilibrium point of the equation \eqref{3-6}.
    Then we have
    	$$E\left( r , b\right) \in \left[ \frac{\chi}{m}e^{mu_*}-\frac{1}{m}u_*, E\left( 0, b\right) \right],$$
    which implies $u\left( r \right)$ is bounded. Therefore, $R_{max}(b)=\infty.$
    	
     In what follows, we demonstrate that $u(r)$ oscillates around $u_*$ and $\underset{r\rightarrow \infty}{\lim}\,\,u \left( r \right) =u_*$.

    Without loss of generality, we assume  $b > u_*$  (the case  $b < u_*$  can be handled similarly). By L'Hospital principle and \eqref{3-6}, we see that
    	$$
    	u^{''}\left( 0 \right) =\frac{1}{N}\left( \frac{1}{m}-\chi e^{mb} \right) <0,
    	$$
    	hence $u\left( r \right)$ get local maximum value at $r=0$. It implies that $u\left( r,b \right)$ decreases monotonically for small $r$.

    (i)We show that $u(r, b)$ cannot remain monotonic for all time. Assume for contradiction, that $u(r, b)$ decreases monotonically for all time. Due to the boundedness of $u(r)$, we derive
    	\begin{align}\label{3-33}
    		&\lim_{r\rightarrow \infty} \chi e^{mu}-\frac{1}{m} =0.
    	\end{align}
    	That is,
    	\begin{align}\label{3-34}
    		&\lim_{r\rightarrow \infty} u\left( r \right) =u_*.
    	\end{align}
    	Noticing that
        \begin{align}\label{3-35}
		u^{\prime}(r)=\frac{1}{r^{N-1}}\int_0^{r}{(\frac{1}{m}-\chi e^{mu}) s^{N-1}}ds<0,
	\end{align}
        therefore,
    	$
    	u\left( r \right) \in \left( u_*, b \right]$ , for $\ r\in \left[ 0,\infty \right).
    	$
    	Denote
    	\begin{align}\label{3-36}
    		&v =u-u_*>0.
    	\end{align}
    	Substituting \eqref{3-36} into \eqref{3-6}, we have
    	\begin{align}\label{3-37}
    		&v^{''}+\frac{\left( N-1 \right)}{r}v^{\prime}+\frac{\chi e^{mu}-\frac{1}{m}}{\left( u-u_* \right)}v=0.
    	\end{align}
    	Since
    	$$
    	\frac{\chi e^{mu}-\frac{1}{m}}{\left( u-u_* \right)}=\frac{\chi e^{mu}-\chi e^{mu_*}}{\left( u-u_* \right)} \geq \chi m e^{mu_*}
    	$$
    	and it is well known that all nontrivial solutions of the initial problem
    	\begin{align*}
    		\left\{
    		\begin{aligned}
    			&v^{''}+\frac{\left( N-1 \right)}{r}v^{\prime}+\chi m e^{mu_*}v=0
    			,\\
    			&v\left( 0 \right) =b-u_*, \ v^{\prime}\left( 0 \right) =0, \\
    		\end{aligned}\right.
    	\end{align*}
    	oscillates around the $v=0$, then according to the Strum's comparison theorem in \cite{19}, it is to prove that the nontrivial solutions of \eqref{3-37} also are oscillatory, which is contradict with $v>0$. Thus, $u(r)$ is non-monotone.
    	\par
    	(ii) We will derive that $u(r)$ oscillates around the $u_*,$ where $u_*= \frac{1}{m}\ln \frac{1}{\chi m}.$
    Let $r_1$ is the first minimum point, then from \eqref{3-35}, we see that  $u\left( r_1 \right) < u_*$ (otherwise, $u'\left( r_1 \right) < 0$).
    Similar to the proof in (i), $u\left( r \right)$ cannot remain increasing in $\left( r_1,\infty \right)$. Keep repeating the above process, then we attain two sequences of extreme points,
    	\begin{align*}
    		 \text{minimum points} &\left\{ 0=r_0<r_2<\cdots \right\},
    	\end{align*}
    	and
    	\begin{align*}
    		\text{maximum points} &\left\{ r_1<r_3<\cdots \right\},
    	\end{align*}
    	with $r_i<r_{i+1}$.
    	Noticing the monotonicity of $E\left( r, b\right)$, it is easy to see that
     when $N=1$, the amplitude is constant, that is
    	\begin{align*}
    		&u\left( r_0 \right) = u\left( r_2 \right) = \cdots >u_*
    	\end{align*}
    	and
    	\begin{align*}
    		&u\left( r_1 \right) = u\left( r_3 \right) = \cdots <u_*.
    	\end{align*}
   When $N\geq 2$,
        \begin{align*}
    		&u\left( r_0 \right) > u\left( r_2 \right) > \cdots >u_*
    	\end{align*}
    	and
    	\begin{align*}
    		&u\left( r_1 \right) < u\left( r_3 \right) < \cdots <u_*.
    	\end{align*}
    \end{proof}

  Proposition \ref{pro-2} is a direct consequence of Lemma \ref{le3.9} and Lemma \ref{le3.10}.

  \medskip

	\textbf{ \textit{Proof of Theorem \ref{th-3}.}}  Let $(\rho, c)$ be a backward self -similar solution of \eqref{1-5} with
\begin{align}\label{3-38}
    		&\rho \left( x,t \right) =\left( T-t \right) ^{-\frac{1}{m}}\phi( (T-t)^{-\frac{1}{Nm}}\vert x\vert)  , \quad c\left( x,t \right) =\left( T-t \right) ^{\frac{2-mN}{mN}}\psi((T-t)^{-\frac{1}{Nm}}\vert x\vert ).
    	\end{align}	
Recalling Proposition \ref{pro-1}, \eqref{3-1},  and Remark \ref{re-1}, letting $A_c=a_c^{\frac{p-1}{p-2}}$, $A_i=a_i^{\frac{p-1}{p-2}}$ ( $i=1, 2$),
then $A_1\le A_c\le A_2$. When $q<\frac{Np}{(N-p)_+}-1$ with $p>2$, $A\in \{A_c\}\cup (A_2, +\infty)$, $\phi$ is radially symmetric, decreasing, and compactly supported solution with $\phi(R(A))=\phi'(R(A))=0$, $\phi(r)>0$ for $r<R(A)$.
Consequently, the support of $\rho$ satisfies
\begin{align}\label{3-39}
    		&\vert x\vert <(T-t)^{\frac{1}{Nm}} R(A),
    	\end{align}
implying that  $\rho$  is radially symmetric and compactly supported, and its support shrinks to the origin at the rate $(T-t)^{\frac{1}{Nm}}$, as $\ t \rightarrow T^-.$  From \eqref{3-38} and \eqref{3-39}, we conclude that $\rho$  undergoes finite-time blow-up at $T$, with the origin being the sole blow-up point.
	
    Next, we show that the mass is finite. Define the mass
	$$\mathcal{M} \left( A \right) :=|\partial B_1|\int_0^{R\left( A \right)} \phi (r)r^{N-1}dr.$$
	From Proposition \ref{pro-1}, we  infer that
\begin{align*}
		&\mathcal{M} \left( A\right) = |\partial B_1|\int_0^{R\left( A\right)}{\phi(r)r^{N-1}dr} \leq A|\partial B_1|\left( R\left( A\right) \right) ^N < \infty.
	\end{align*}

	In what follows, we show that for any $f(x) \in C_0(R^N)$ 	,
	$$\int_{R^{N}}{\rho (x,t)f(x)dx} \rightarrow \mathcal{M} (A)f(0), \quad as \quad t \rightarrow T^-.$$
	For any $f(x) \in C_0(R^N)$, we have
	$$
	\int_{R^N}{\rho(x,t)f(x)}dx=\int_{R^N}{(T-t)^{-\frac{1}{Nm}}\phi \left( (T-t)^{-\frac{1}{Nm}}\left| x \right| \right)f(x)}dx=\int_{R^{N}}{\phi(\vert y\vert)f((T-t)^{\frac{1}{Nm}}y)dy}
	$$
    and
    $$
    \int_{R^N}{\rho(x,t)f(0)}dx=\int_{R^{N}}{\phi(\vert y\vert)f(0)dy}=\int_{\vert y \vert \leqslant R(A)}{\phi(\vert y\vert)f(0)dy}=\mathcal{M} (A)f(0).
    $$
	Since $f(x) \in C_0(R^N)$, for any given $\varepsilon$ and a sufficiently large constant $R_0>R(A)$, there exists $T_0<T$ such that when $t \in (T_0,T)$,
	\begin{align*}
		&\sup_{\vert y \vert \leqslant R_0}\vert f((T-t)^\frac{1}{mN}y) -f(0)\vert <\frac{\varepsilon}{\mathcal{M} (A)}.
	\end{align*}
	Hence
	\begin{align*}
		&\Big| \int_{R^{N}}{\rho (x,t)f(x)dx} -\mathcal{M} (A)f(0) \Big|=\Big| \int_{R^{N}}{\phi(\vert y\vert)(f((T-t)^{\frac{1}{Nm}}y)-f(0))dy} \Big|
		\\
		&\leq \Big| \int_{\vert y \vert \leqslant R(A)}{\phi(\vert y\vert)(f((T-t)^{\frac{1}{Nm}}y)-f(0))dy} \Big|
		\\
		& \leq \mathcal{M} (A)\sup_{\vert y \vert \leqslant R(A^*)}\Big| f((T-t)^{\frac{1}{Nm}}y)-f(0) \Big|
		\\
		&<\varepsilon.
	\end{align*}
	It implies that $$\rho \left( x,t \right) \rightarrow \mathcal{M} \left( A\right) \delta \left( x \right) , \, as \, \ t \rightarrow T^-.$$

\section{Forward self-similar singular solution}
In this section, we  study the existence of forward self-similar solutions of \eqref{1-5} with singular initial value
for the ciritial case 	
$$
m=\frac{( p-2 ) N+p}{N}>0\quad \left(\text{i.e.,} \ p=\frac{(m+2)N}{N+1}\right).
$$
{\bf Noticing that $m>0$ implies that $p>\frac{2N}{N+1}$.} Let
\begin{align*}
\rho \left( x,t \right) =t^{-\alpha}\phi (r) ,\quad c\left( x,t \right) =t^{\gamma}\psi(r),
\end{align*}
where $r =t^{-\beta}|x|$, $\alpha$, $\gamma$, $\beta$ are same as those defined in \eqref{2-1}.
Similarly, \eqref{1-5} is transformed into the following initial value problem
\begin{align}
	\label{2-11}\left\{
	\begin{aligned}
		&\left( \frac{\left| \phi ^{\prime}\left( r \right) \right|^{p-2}\phi ^{\prime}\left( r \right)}{\phi \left( r \right)} \right) ^{\prime}+\frac{N-1}{r}\frac{\left| \phi ^{\prime}\left( r \right) \right|^{p-2}\phi ^{\prime}\left( r \right)}{\phi \left( r \right)}+\chi \phi ^m\left( r \right) +\frac{1}{m}
		=0,\\
		&	\phi\left( 0 \right) =A>0, \quad \phi^{\prime}\left( 0 \right) =0,
	\end{aligned}\right.
\end{align}
with
$$
\psi(\xi)=(K*\phi^m)(\xi).
$$
For the fast diffusion case, we establish the existence of ground state solutions to the problem \eqref{2-11} and provide explicit decay rate estimates. In the slow diffusion case, we prove the existence of compactly supported solutions that satisfy the homogeneous Dirichlet-Neumann free boundary condition. The precise statements are given in the following two theorems.

\begin{theorem}\label{th-4}
Assume that $N\geq1$.

$\bullet$ When $\frac{2N}{N+1}<p \leq 2$, for any $A>0$, the problem \eqref{2-11} admits a unique global positive solution $\phi(r) \in C^1[0,\infty)$ and $\phi(r)$ is decreasing such that
$$\underset{r\rightarrow \infty}{\lim}\,\,\phi \left( r \right) =0.$$

$\bullet$ When $p>2$,  for any $A>0$, the problem \eqref{2-11} admits a unique decreasing solution with compact support, which goes to 0 at a finite point $R_A$ with $\phi'(R_A)=0$.
\end{theorem}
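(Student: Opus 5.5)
We use the same substitutions as in Section 3, now with the sign of the constant term reversed. Writing $\Phi:=|\phi'|^{p-2}\phi'/\phi$, the equation in \eqref{2-11} reads $(r^{N-1}\Phi)'=-r^{N-1}(\chi\phi^m+\frac1m)<0$, and $\Phi(0)=0$. Integrating gives
$$
\frac{|\phi'|^{p-2}\phi'}{\phi}(r)=-\frac{1}{r^{N-1}}\int_0^r\Big(\chi\phi^m+\frac1m\Big)s^{N-1}ds\le -\frac{r}{mN}<0 .
$$
So every solution is strictly decreasing wherever $\phi>0$. This immediately gives monotonicity in both regimes.

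\emph{Slow diffusion} $p>2$. Set $u=\phi^{\frac{p-2}{p-1}}$. Then $|u'|^{p-2}u'$ is a positive multiple of $|\phi'|^{p-2}\phi'/\phi$, and the problem becomes
$$
B(r^{N-1}|u'|^{p-2}u')'=-r^{N-1}\Big(\chi|u|^{q-1}u+\frac1m\Big),\qquad u(0)=a,\ u'(0)=0.
$$
Local existence and uniqueness follow exactly as in Lemma \ref{le3.1}. While $u>0$, the bound $B|u'|^{p-2}u'\le -r/(mN)$ gives $u'\le -c\,r^{1/(p-1)}$. Integrating, $u$ would become negative beyond some explicit radius, so it must hit zero at a finite $R_A$. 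Since $u$ is decreasing and bounded by $a$ on $[0,R_A]$, $u'$ stays bounded there and no blow-up of $u'$ occurs first. Hence $u'(R_A)$ is finite (indeed $u'(R_A)<0$). By \eqref{eq2-11}, $\phi'(R_A)=\frac{p-1}{p-2}u^{\frac1{p-2}}(R_A)u'(R_A)=0$. Extending $\phi$ by zero then yields the compactly supported decreasing solution. Uniqueness for each $A$ comes from the fixed-point representation, which is valid because $\int_0^r(\cdots)s^{N-1}ds$ has a fixed sign bounded away from zero for $r>0$. This removes the degeneracy of the $p$-Laplacian away from $r=0$, and near $r=0$ the standard argument applies.

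\emph{Fast diffusion} $\frac{2N}{N+1}<p\le2$. For $p<2$, set $u=\phi^{\frac{p-2}{p-1}}$, so $\phi\to0$ corresponds to $u\to+\infty$ and $q<0$. The sign flips, and $u$ satisfies $B(r^{N-1}|u'|^{p-2}u')'=r^{N-1}(\chi u^{q}+\frac1m)>0$, so $u$ is increasing. We must rule out blow-up of $u$ at a finite $r$. The bound $B|u'|^{p-1}\le \frac{r}{N}(\chi a^{q}+\frac1m)$ uses $u\ge a$ and $q<0$, so that $u^q\le a^q$. This gives $u'\le Cr^{1/(p-1)}$, hence $u$ is finite on bounded intervals and $R_{max}=\infty$. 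In the continuation criterion, the lower bound $u\ge a>0$ keeps the nonlinearity $u^q$ regular (Remark after Lemma \ref{le3.1}). The lower bound $B|u'|^{p-1}\ge r/(mN)$ gives $u\ge a+c\,r^{p/(p-1)}\to\infty$, i.e. $\phi\to0$. For $p=2$, use $u=\ln\phi$: then $u''+\frac{N-1}{r}u'=-\chi e^{mu}-\frac1m$, so $u'\le -r/(mN)$ and $u\to-\infty$. Global existence holds because $e^{mu}\le e^{mb}$ keeps $|u'|\le Cr$. Positivity of $\phi$ is automatic in both cases since $\phi=u^{\frac{p-1}{p-2}}$ or $e^{u}$. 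The role of $p>\frac{2N}{N+1}$ is only to ensure $m>0$, which is needed for the sign of $\frac1m$ and for the relation $\alpha,\beta>0$.

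The main obstacle is making uniqueness and regularity rigorous at $r=0$, and for $p>2$ at $r=R_A$. At $r=0$ the flux vanishes, so the $p$-Laplacian is degenerate ($p>2$) or singular ($p<2$). For $p>2$ one also has to check that the extension by zero is $C^1$. That follows from $\phi'(R_A)=0$ together with $\phi'\sim u^{1/(p-2)}$, and we would verify it directly as in Remark \ref{re-1}.
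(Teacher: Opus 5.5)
Your proposal is correct and follows essentially the paper's route (Lemmas \ref{le4.1}--\ref{le4.3}): you integrate the radial flux equation and use the fixed sign of $\chi\phi^m+\frac1m$ to get monotonicity and two-sided power bounds. These yield global existence and $\phi\to0$ for $p\le 2$, and for $p>2$ a finite first zero $R_A$ with $u'(R_A)<0$, hence $\phi'(R_A)=0$. The only minor difference is that for $p>2$ the paper first proves global existence of $u$ beyond its zero via the energy $E(r,a)$, whereas you use only the a priori bounds on $[0,R_A]$; this is all the theorem needs, and your remark on why the fixed-point uniqueness works is more explicit than the paper's omitted Lemma \ref{le3.1}.
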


\begin{theorem}\label{th-5}
Assume that $N\geq1$. Let $\phi(r)\in C^1[0,\infty)$ be the global solution of \eqref{2-11}. Then when $\frac{2N}{N+1}<p<2$,
		 $$\lim_{r\to\infty}\phi(r)r^{\frac{p}{2-p}}=K^{\frac{p-1}{p-2}},$$
where $K=({\frac{1}{BNm}})^{\frac{1}{p-1}}\frac{p-1}{p}$.
When $p= 2$,
$$
\lim_{r\to\infty}\frac{\ln\phi(r)}{r^2}=-\frac14.
$$
\end{theorem}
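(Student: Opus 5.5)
The plan is to integrate the equation in \eqref{2-11} once in flux form. Then I read off the asymptotics of the logarithmic flux $|\phi'|^{p-2}\phi'/\phi$, and integrate a second time after the substitution \eqref{3-1} (for $p<2$) or \eqref{3-5} (for $p=2$). Set
$$
w(r):=\frac{|\phi'(r)|^{p-2}\phi'(r)}{\phi(r)} .
$$
By Theorem \ref{th-4}, $\phi$ is positive, decreasing and $C^1$ on $[0,\infty)$, with $\phi(r)\to 0$. The first equation of \eqref{2-11} reads $(r^{N-1}w)'=-r^{N-1}\bigl(\chi\phi^m+\frac1m\bigr)$. Using $\phi'(0)=0$, this gives
$$
w(r)=-\frac{r}{Nm}-\frac{\chi}{r^{N-1}}\int_0^r s^{N-1}\phi^m(s)\,ds .
$$

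The key intermediate step is to show that $w(r)=-\frac{r}{Nm}(1+o(1))$ as $r\to\infty$, which amounts to proving $r^{-N}\int_0^r s^{N-1}\phi^m(s)\,ds\to 0$. This is a Cesàro-type argument. Given $\varepsilon>0$, choose $r_0$ such that $\phi^m<\varepsilon$ on $(r_0,\infty)$; this is possible because $\phi\to0$ and $m>0$. Split the integral at $r_0$. The part on $[0,r_0]$ is bounded by $A^m r_0^N/N$. The part on $[r_0,r]$ is at most $\varepsilon r^N/N$. Dividing by $r^N$ and letting $r\to\infty$ gives the claim. This is the only place where the decay $\phi\to0$ from Theorem \ref{th-4} is used, and it is the step I regard as the crux. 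Everything after it is an explicit integration.

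Case $\frac{2N}{N+1}<p<2$. Since $\phi'\le0$, we have $|\phi'|^{p-1}=\phi\,\frac{r}{Nm}(1+o(1))$, and therefore $-\phi'=\phi^{\frac1{p-1}}\bigl(\frac{r}{Nm}\bigr)^{\frac1{p-1}}(1+o(1))$. With $u=\phi^{\frac{p-2}{p-1}}$ as in \eqref{3-1}, this gives
$$
u'=\frac{p-2}{p-1}\,\phi^{-\frac1{p-1}}\phi'=\frac{2-p}{p-1}\Bigl(\frac{r}{Nm}\Bigr)^{\frac1{p-1}}(1+o(1)).
$$
Note that $B^{-\frac1{p-1}}=\frac{2-p}{p-1}$. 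Since $u\to\infty$, L'Hôpital's rule applied to $u(r)/r^{\frac p{p-1}}$ yields
$$
\lim_{r\to\infty}\frac{u(r)}{r^{p/(p-1)}}=\frac{p-1}{p}\Bigl(\frac1{BNm}\Bigr)^{\frac1{p-1}}=K.
$$
Raising to the power $\frac{p-1}{p-2}$ then gives $\phi(r)\,r^{\frac{p}{2-p}}\to K^{\frac{p-1}{p-2}}$. This works because $\frac{p}{p-1}\cdot\frac{p-1}{p-2}=-\frac p{2-p}$, and the map $x\mapsto x^{\frac{p-1}{p-2}}$ is continuous at $K>0$.

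Case $p=2$. Here $w=(\ln\phi)'=-\frac{r}{Nm}(1+o(1))$. L'Hôpital's rule then gives $\ln\phi(r)/r^2\to-\frac1{2Nm}$. By \eqref{2-2} with $p=2$ we have $m=\frac2N$, so $2Nm=4$ and the limit is $-\frac14$, as claimed.
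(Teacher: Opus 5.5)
Your proof is correct and follows essentially the same route as the paper. You integrate the radial equation once in flux form; your identity for $w$ is, up to sign, exactly \eqref{4-8} (resp.\ \eqref{4-7} for $p=2$). You then show that the $\chi\phi^m$ contribution is $o(r^N)$ because $\phi\to0$, and conclude by L'H\^opital, using $mN=2$ when $p=2$.

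The only difference is cosmetic. The paper works in the variable $u$ and controls $\chi u^q$ through the explicit lower bound \eqref{as-1}. You instead use the logarithmic flux $w$ and a Ces\`aro argument based on the decay $\phi\to0$ from Theorem~\ref{th-4}.
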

Based on Theorem \ref{th-4} and Theorem \ref{th-5}, we can derive the following theorem.

\begin{theorem}[{\bf Forward self-similar solution with expanding compact support and $\delta$-initial data}]
\label{th-6}
Let $\left( \rho ,c \right)$ be the forward self-similar solution of \eqref{1-5} obtained in the above two theorems, i.e.,
\[
\rho \left( x,t \right) =t^{-\frac{1}{m}}\phi \bigl( t ^{-\frac{1}{mN}}\left| x \right| \bigr),\qquad
c\left( x,t \right) = t^{\frac{2-mN}{mN}}\psi \bigl( t ^{-\frac{1}{mN}}\left| x \right| \bigr),
\]
where $m=\frac{(p-2)N+p}{N}$.
Then for any $p>\frac{2N}{N+1}$, the self-similar solution possesses the following properties:

$\bullet$ {\bf Finite mass}. The bacterial density $\rho$ has finite total mass, defined by
\[
M :=\int_{\mathbb{R}^N} \rho(x,t)\,dx = |\partial B_1| \int_{0}^{\infty} \phi(r)\, r^{N-1}\,dr < +\infty .
\]

$\bullet$ {\bf Dirac-$\delta$ initial singularity}. As $t\to 0^{+}$, the solution $\rho$ concentrates at the origin in the sense of distributions, namely
\[
\rho(x,t) \longrightarrow M\,\delta(x), \qquad t\to 0^{+},
\]
which coincides with the initial behavior of the fundamental solution. The finiteness of $M$ is verified as follows:
\begin{itemize}
	\item When $p=2$, $\ln \phi(r)\sim -\dfrac{r^{2}}{4}$, which guarantees $M<+\infty$.
	\item When $\dfrac{2N}{N+1}<p<2$, the asymptotic $\phi(r)\sim K^{\frac{p-1}{p-2}} r^{\frac{p}{p-2}}$ together with the condition $m>0$ implies $\frac{p}{p-2}+N<0$, hence $M<+\infty$.
	\item When $p>2$, the profile $\phi$ has compact support, and
	\[
	M = |\partial B_1| \int_{0}^{R_0} \phi(r)\, r^{N-1}\,dr .
	\]
\end{itemize}

$\bullet$ {\bf Expansion of the support (slow diffusion case $p>2$)}. For $p>2$, the initial datum $\rho$ is a Dirac measure concentrated at the origin, and the support of $\rho$ expands radially as time increases. Specifically, the support is given by
\[
|x| < t^{\frac{1}{mN}} R_0,
\]
so that its radius grows at the rate $t^{\frac{1}{mN}}$.
\end{theorem}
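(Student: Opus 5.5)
The proof of Theorem \ref{th-6} will follow the scheme used for Theorem \ref{th-3}, with $T-t$ replaced by $t$, and will take Theorems \ref{th-4} and \ref{th-5} as given.

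\textbf{Finite mass and conservation.} The change of variables $y=t^{-\frac1{mN}}x$ has Jacobian $t^{\frac1m}$, and this exactly cancels the prefactor $t^{-\frac1m}$. Hence
\[
\int_{\mathbb R^N}\rho(x,t)\,dx=|\partial B_1|\int_0^\infty\phi(r)r^{N-1}\,dr=M
\]
for every $t>0$. It therefore remains to check that $M<\infty$, which I will do case by case.

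For $p>2$, Theorem \ref{th-4} gives a decreasing profile supported in $[0,R_A]$ with $\phi\le A$, so $M\le A|\partial B_1|R_A^N/N<\infty$.

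For $p=2$, Theorem \ref{th-5} gives $\ln\phi(r)\le -r^2/8$ for $r$ large. This Gaussian bound makes $\phi r^{N-1}$ integrable at infinity, and $\phi$ is continuous and bounded near $0$.

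For $\frac{2N}{N+1}<p<2$, Theorem \ref{th-5} gives $\phi(r)\le C r^{-\frac{p}{2-p}}$ for large $r$. Integrability of $\phi r^{N-1}$ at infinity requires $\frac{p}{2-p}>N$, that is, $p>N(2-p)$. This is equivalent to $(p-2)N+p>0$, which is exactly $m>0$. I would write out this algebra explicitly, since it is the only point where the hypothesis $p>\frac{2N}{N+1}$ enters.

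\textbf{Dirac initial trace.} Take $f\in C_0(\mathbb R^N)$. The same rescaling gives
\[
\int\rho(x,t)f(x)\,dx=\int\phi(|y|)\,f\bigl(t^{\frac1{mN}}y\bigr)\,dy .
\]
In the compact-support case $p>2$, the argument is identical to the one in the proof of Theorem \ref{th-3}. Uniform continuity of $f$ on $B_{R_A}$ gives $\sup_{|y|\le R_A}|f(t^{\frac1{mN}}y)-f(0)|\to0$ as $t\to0^+$. Consequently the integral converges to $Mf(0)$.

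For $p\le2$ the support of $\phi$ is all of $\mathbb R^N$, so I will split the integral.
\begin{itemize}
\item \emph{Tail.} Given $\varepsilon>0$, choose $R_0$ with $\int_{|y|>R_0}\phi<\varepsilon/(4\|f\|_\infty+1)$; this is possible because $M<\infty$. The contributions of $f(t^{\frac1{mN}}y)$ and of $f(0)$ over $|y|>R_0$ are then each at most $\varepsilon/4$.
\item \emph{Ball.} On $|y|\le R_0$, uniform continuity of $f$ makes the difference smaller than $\varepsilon/2$ once $t$ is small.
\end{itemize}
Together these give $\rho(\cdot,t)\to M\delta$ in the sense of distributions, and in fact weakly against $C_0$ functions. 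This tail step is the main place where the argument goes beyond the backward case, and it depends entirely on the decay rates of Theorem \ref{th-5}.

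\textbf{Support expansion for $p>2$.} Since $\phi>0$ on $[0,R_A)$ and $\phi=0$ for $r\ge R_A$, the support of $\rho(\cdot,t)$ is exactly $\{|x|<t^{\frac1{mN}}R_A\}$. Because $m>0$, this radius increases monotonically, at rate $t^{\frac1{mN}}$, from $0$ to $\infty$; set $R_0=R_A$.

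I would also note that the function so obtained is a weak solution. The reasoning is the same as for \eqref{weaksolution}: the free-boundary conditions $\phi(R_A)=\phi'(R_A)=0$ from Theorem \ref{th-4} kill the boundary terms when integrating by parts. I expect this to be a routine check.
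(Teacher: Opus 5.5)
Your proposal is correct and follows essentially the same route as the paper. Both arguments get the mass identity from the rescaling $y=t^{-1/(mN)}x$. Both obtain $M<\infty$ from the decay rates of Theorem \ref{th-5} or from compact support. Both prove the Dirac trace by splitting $\int\phi(|y|)\,(f(t^{1/(mN)}y)-f(0))\,dy$ into a ball part and a tail part. Both read off the support $|x|<t^{1/(mN)}R_0$ directly from the profile. Beyond this, you supply the $p>2$ trace argument, which the paper omits as similar, and you write out the algebra reducing integrability to $m>0$; neither requires any new idea.
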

\begin{remark} \label{re4-1}
Noticing that
$$
\psi(\xi)=(K*\phi^m)(\xi),
$$
recalling Theorem \ref{th-5}, one can verify that
$\psi$ is well-defined by \eqref{radial-1} when $p>2\sqrt{\frac{N}{N+1}}$. However, if $\frac{2N}{N+1}<p\le 2\sqrt{\frac{N}{N+1}}$,
$\psi$ becomes
unbounded everywhere, although $\phi'(r)$ remains well-defined, this can be seen from its derivative
$$
\psi'(r)=-r^{1-N}\int_0^rs^{N-1}\phi^m(s)ds.
$$
\end{remark}
To investigate the existence of nonegative solutions to the initial value problem \eqref{2-11}. We begin by performing the following transformation.
Let
\begin{align}\label{4-1}
	&u\left( r \right) =\ln \phi,
\end{align}
for $p= 2$. Then \eqref{2-11} is transformed into
\begin{align}
	\label{4-2}\left\{
	\begin{aligned}
		&u''\left( r \right) +\frac{N-1}{r}u^{\prime}\left( r \right) +\chi e^{um}+\frac{1}{m}=0
		,\\
		&	u\left( 0 \right) =b, \, u^{\prime}\left( 0 \right) =0,
		\\
	\end{aligned}\right.
\end{align}
where $m=\frac 2N$, $b=\ln A$. Let
\begin{align}\label{4-3}
    	&u\left( r \right) =\phi ^{\frac{p-2}{p-1}}\left( r \right),
    \end{align}
and
\begin{align}\label{4-4}
    	&q=\frac{(p-1)m}{p-2},
    \end{align}
for $p\ne 2$. Then when $1<p<2$, \eqref{2-11} is transformed into
    \begin{align}
    	\label{4-5}\left\{
    	\begin{aligned}
    		&\left( B\left| u^{\prime}\left( r \right) \right|^{p-2}u^{\prime}\left( r \right) \right) ^{\prime}+\frac{B\left( N-1 \right)}{r}\left| u^{\prime}\left( r \right) \right|^{p-2}u^{\prime}\left( r \right) -\chi \left| u \right|^{q-1}u-\frac{1}{m}=0
    		,\\
    		&	u\left( 0 \right) =a, u^{\prime}\left( 0 \right) =0.\\
    	\end{aligned}\right.
    \end{align}
    When $p>2$, \eqref{2-11} is transformed into
    \begin{align}
    	\label{4-6}\left\{
    	\begin{aligned}
    		&\left( B\left| u^{\prime}\left( r \right) \right|^{p-2}u^{\prime}\left( r \right) \right) ^{\prime}+\frac{B\left( N-1 \right)}{r}\left| u^{\prime}\left( r \right) \right|^{p-2}u^{\prime}\left( r \right) +\chi \left| u \right|^{q-1}u+\frac{1}{m}=0
    		,\\
    		&	u\left( 0 \right) =a, u^{\prime}\left( 0 \right) =0.\\
    	\end{aligned}\right.
    \end{align}
     where $B=| \frac{p-1}{p-2} | ^{p-1}$ and $a=A^{\frac{p-2}{p-1}}.$
     For later convenience in the proof, we allow the solution to take negative values, and thus replace $u^{q}$ with $ |u|^{q-1}u$.
Following a similar argument as in Lemma  \ref{le3.1}, he local existence of solutions to the problem \eqref{4-2} (or \eqref{4-6}) can be established directly.

We first analyze the case $p=2$  and investigate the existence of solutions to the problem \eqref{4-2}. Based on \eqref{4-1}, we expect to find a global solution of the problem \eqref{4-2} such that $\underset{r\rightarrow \infty}{\lim}\,\,u \left( r \right) =-\infty$,
 which ensures that $\phi(r)$ remains a globally positive solution with $\underset{r\rightarrow \infty}{\lim}\,\,\phi \left( r \right) =0$.
\begin{lemma}\label{le4.1}
Let $u\left( r \right)$ be a classical solution of \eqref{4-2}, with $\left( 0, R_{max}(b) \right)$ denoting its maximal existence interval.
For any $b\in\mathbb R$, we have $R_{max}(b) = \infty$ and $u\left( r \right)$ is decreasing and satisfying
	 $$\underset{r\rightarrow \infty}{\lim}\,\,u \left( r \right) =-\infty.$$
	 \end{lemma}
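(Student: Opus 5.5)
Since $m>0$ and $\chi>0$, the source term $\chi e^{mu}+\frac1m$ in \eqref{4-2} is strictly positive. The plan is to use this positivity to get monotonicity immediately from the integral form of \eqref{4-2}, and then to extract global existence and the limit $-\infty$ from the resulting explicit bound. The ODE is written as $(r^{N-1}u')'=-r^{N-1}\bigl(\chi e^{mu}+\frac1m\bigr)$. Integrating from $0$ and using $u'(0)=0$ gives
\begin{equation*}
u'(r)=-\frac{1}{r^{N-1}}\int_0^r s^{N-1}\Bigl(\chi e^{mu(s)}+\frac1m\Bigr)ds<0
\end{equation*}
for every $r\in(0,R_{max}(b))$. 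So $u$ is strictly decreasing on its whole existence interval. At $r=0$, L'Hospital's rule gives $u''(0)=-\frac1N(\chi e^{mb}+\frac1m)<0$, which is consistent with this.

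\textbf{Global existence.} Because $u$ is decreasing, $u(r)\le b$, and hence $0<e^{mu}\le e^{mb}$. The identity above then gives the two-sided bound
\begin{equation*}
-\frac{r}{N}\Bigl(\chi e^{mb}+\frac1m\Bigr)\le u'(r)\le -\frac{r}{Nm}.
\end{equation*}
On any finite interval $u$ is therefore bounded above by $b$ and below by $b-\frac{r^2}{2N}(\chi e^{mb}+\frac1m)$. Neither $u$ nor $u'$ can blow up at a finite radius, and the continuation criterion of Lemma \ref{le3.1} (as adapted to \eqref{4-2}, where blow-up can only mean $|u|\to\infty$) forces $R_{max}(b)=\infty$. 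Here I must handle one point with care: Lemma \ref{le3.1} is stated for $u\to+\infty$. For \eqref{4-2} the nonlinearity $e^{mu}$ is locally Lipschitz, so the standard continuation argument applies to any bounded $(u,u')$. I will state this explicitly rather than cite the lemma verbatim.

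\textbf{Limit at infinity.} Integrating the upper bound $u'(r)\le -\frac{r}{Nm}$ gives
\begin{equation*}
u(r)\le b-\frac{r^2}{2Nm}\to-\infty,\qquad r\to\infty.
\end{equation*}
This completes the lemma. It also already gives the quadratic decay scale that will be refined in Theorem \ref{th-5}. With $m=\frac2N$ we have $\frac{1}{2Nm}=\frac14$, and the lower bound is $u\ge b-\frac{r^2}{2N}(\chi e^{mb}+\frac1m)$. Once $e^{mu}\to0$, the precise constant $-\frac14$ should follow by L'Hospital's rule applied to $u'(r)/r$.

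I do not expect any substantial obstacle. The only step needing care is justifying the continuation principle in the direction $u\to-\infty$, which the explicit bounds above settle.
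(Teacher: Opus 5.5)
Your proof is correct and follows essentially the same route as the paper: the integrated identity $r^{N-1}u'=-\int_0^r(\chi e^{mu}+\frac1m)s^{N-1}ds$ gives monotonicity, the lower bound $u\ge b-\frac{r^2}{2N}(\chi e^{mb}+\frac1m)$ rules out finite-radius blow-up, and a quadratic upper bound gives $u\to-\infty$. The one difference is that you read the limit off directly from $u'\le -\frac{r}{Nm}$, whereas the paper argues by contradiction from an assumed finite limit $C_1$; your version is slightly shorter, and your explicit adaptation of the continuation criterion to the case $u\to-\infty$ spells out a step the paper leaves implicit.
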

	 \begin{proof}
	 First, we show that $R_{max}=\infty$ and $u\left( r \right)$ is decreasing on $(0,\infty)$.
    From \eqref{4-2}, we see that
    $$(r^{N-1}u')^{'} =-(\chi e^{um}+\frac{1}{m})r^{N-1}.$$
    Integrating this equality from 0 to $r$ yields
    \begin{align}\label{4-7}
    	\begin{aligned}
    		&r^{N-1}u'=-\int_0^{ r }{(\chi e^{um}+\frac{1}{m})s^{N-1}ds},
    	\end{aligned}
    \end{align}
    then $u'(r)\leq-\frac{1}{mN}r<0$ on $(0, R_{max})$, which implies that $u(r)$ is monotonically decreasing on $(0, R_{max})$.

    We claim that $R_{max}=\infty.$ Suppose the contrary, $R_{max}<\infty$, and $\underset{r\rightarrow R_{max}^{-}}{\lim}u \left( r \right) = -\infty$ since $u(r)$ is monotonically decreasing on $(0, R_{max})$.
    By \eqref{4-7}, we see that
    \begin{align*}
    	\begin{aligned}
    		&r^{N-1}u'(r) \geq -\int_0^{ r }{(\chi e^{bm}+\frac{1}{m})s^{N-1}ds}=-\frac{1}{N}(\chi e^{bm}+\frac{1}{m})r^{N}.
    	\end{aligned}
    \end{align*}
    That is,
   \begin{align*}
    	\begin{aligned}
    		&u'(r) \geq -\frac{1}{N}(\chi e^{bm}+\frac{1}{m})r.
    	\end{aligned}
    \end{align*}
    Integrating this inequality from 0 to $r$ yields
    \begin{align*}
    	\begin{aligned}
    		&u(r) \geq b -\frac{1}{2N}(\chi e^{bm}+\frac{1}{m})r^2,
    	\end{aligned}
    \end{align*}
    which implies that $\underset{r \rightarrow R_{max}^{-}}{\lim}\,\,u \left( r \right) > -\infty.$ It is a contradiction. Therefore, $R_{max}=\infty.$
	
 Next, we demonstrate that $\underset{r\rightarrow \infty}{\lim}\,\,u \left( r \right) =-\infty$. Suppose the contrary,  then there exist a constant $C_1$ such that $\underset{r \rightarrow \infty}{\lim}\,\,u \left( r \right) = C_1$. Using \eqref{4-7}, we see that
    \begin{align*}
    	\begin{aligned}
    		&r^{N-1}u'(r) \leq -\int_0^{ r }{(\chi e^{mC_1}+\frac{1}{m})s^{N-1}ds}=-\frac{1}{N}(\chi e^{mC_1}+\frac{1}{m})r^{N}.
    	\end{aligned}
    \end{align*}
    That is,
 \begin{align*}
    	\begin{aligned}
    		&u'(r) \leq-\frac{1}{N}(\chi e^{mC_1}+\frac{1}{m})r.
    	\end{aligned}
    \end{align*}
    Integrating this inequality from 0 to $r$ yields
    \begin{align*}
    	\begin{aligned}
    		&u(r) \leq b -\frac{1}{2N}(\chi e^{mC_1}+\frac{1}{m})r^2,
    	\end{aligned}
    \end{align*}
    which implies that $\underset{r \rightarrow \infty}{\lim}\,\,u \left( r \right) \leq -\infty.$ It is a contradiction. Therefore, $\underset{r\rightarrow \infty}{\lim}\,\,u \left( r \right) =-\infty$.
\end{proof}

Next, we consider the case $1<p<2$. By \eqref{4-3} and $\frac{p-2}{p-2}<0$, we expect to find a global positive solution of the problem \eqref{4-5} such that $\underset{r\rightarrow \infty}{\lim}\,\,u \left( r \right) =+\infty$ in order to obtain that $\phi(r)$ is a global positive solution and $\underset{r\rightarrow \infty}{\lim}\,\,\phi \left( r \right) =0.$

\begin{lemma}\label{le4.2}
	 Assume that $1<p<2$, $q<0$. Let $u\left( r \right)$ be a classical solution of \eqref{4-5}, and $\left( 0, R_{max}(a)\right)$ be the maximal existence interval of the solution. Then $R_{max}(a)=\infty$ for any $a>0$, and $u\left( r \right)$ is increasing, satisfying
$$\underset{r\rightarrow \infty}{\lim}\,\,u \left( r \right) =+\infty,$$
and
$$
\lim_{r\to\infty} \frac{u(r)}{r^{\frac{p}{p-1}}}=\frac{p-1}{p}(\frac{1}{mBN})^\frac{1}{p-1}.
$$
	 \end{lemma}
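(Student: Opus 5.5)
The plan is to work with the integrated form of \eqref{4-5}. Multiplying by $r^{N-1}$ and integrating from $0$ to $r$ gives
\begin{align*}
B r^{N-1}|u'|^{p-2}u'(r)=\int_0^r\Big(\chi|u|^{q-1}u+\frac1m\Big)s^{N-1}ds .
\end{align*}
As long as $u>0$ the integrand is strictly positive, since $\chi u^q>0$ and $\frac1m>0$. Hence $u'>0$ on $(0,r)$, and $u$ is increasing and stays at least $a>0$. This is where the fast-diffusion sign structure helps: $u$ cannot approach $0$, so the singular term $u^q$ with $q<0$ stays bounded by $a^q$. By the remark after Lemma \ref{le3.1}, local existence and continuation therefore hold as long as $u$ stays finite.

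Next comes two-sided control of $u'$. From $a\le u$ we get $0<u^q\le a^q$. This gives the bounds
\begin{align*}
\frac{r}{BNm}\le |u'|^{p-2}u'\le \frac{r}{BN}\Big(\chi a^q+\frac1m\Big).
\end{align*}
Thus $c_1 r^{1/(p-1)}\le u'\le c_2 r^{1/(p-1)}$. Integrating, $u$ grows at most like $r^{p/(p-1)}$, so it cannot blow up at a finite $R_{max}$, and Lemma \ref{le3.1} (with the remark) yields $R_{max}=\infty$. The lower bound gives $u\ge a+c_1\frac{p-1}{p}r^{p/(p-1)}\to+\infty$.

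For the sharp asymptotics, I will use that $u\to\infty$ and $q<0$ give $\chi u^q\to0$. Then the Cesàro-type limit
\begin{align*}
\frac{N}{r^N}\int_0^r\Big(\chi u^q+\frac1m\Big)s^{N-1}ds\longrightarrow \frac1m ,
\end{align*}
which can be justified by splitting the integral at a large $r_0$ or by L'Hospital. Hence $(u')^{p-1}/r\to\frac{1}{BNm}$, that is, $u'(r)/r^{1/(p-1)}\to(\frac{1}{BNm})^{1/(p-1)}$. Applying L'Hospital once more to $u(r)/r^{p/(p-1)}$ gives the limit $\frac{p-1}{p}(\frac{1}{mBN})^{1/(p-1)}$, as claimed.

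The only delicate step I expect is the continuation argument. One must make sure that $u$ staying away from $0$ (it is increasing from $a>0$) is exactly the hypothesis needed in the remark after Lemma \ref{le3.1}. The explicit bound on $u'$ then rules out blow-up. The rest is routine integration.
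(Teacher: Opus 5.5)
Your proposal is correct and follows essentially the same route as the paper. Both arguments use the integrated identity and monotonicity of $u$ (you read this off directly from the positive integrand, while the paper argues by contradiction at a first maximum). Both then use the two-sided bound $c_1 r^{1/(p-1)}\le u'\le c_2 r^{1/(p-1)}$ to rule out blow-up at finite $r$ and to force $u\to+\infty$, and finish with L'H\^opital for the sharp constant. The only real difference is the last step: you use $\chi u^q\to 0$ and a Ces\`aro average instead of substituting the lower bound back into the integrand. This is slightly cleaner, because it avoids the paper's explicit integration of $s^{\frac{pq}{p-1}+N-1}$. That exponent's $\frac{pq}{p-1}+N$ becomes zero or negative for $p$ close to $2$, and there the paper's displayed bound has to be adjusted.
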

\begin{proof}

 First, we show that $R_{max}=\infty$ and $u\left( r \right)$ is increasing on $(0,\infty)$.
 From \eqref{4-5}, it is easy to see that
     \begin{align*}
     	& \left( Br^{N-1}\left| u^{\prime}\left( r \right) \right|^{p-2}u^{\prime}\left( r \right) \right) ^{\prime}=\left( \frac{1}{m}+\chi \left| u \right|^{q-1}u \right) r^{N-1}.
     \end{align*}
     Integrating this equality from 0 to $r$ yields
     \begin{align}\label{4-8}
    	& Br^{N-1}\left| u^{\prime}\left( r \right) \right|^{p-2}u^{\prime}\left( r \right) =\int_0^r{\left( \frac{1}{m}+\chi \left| u \right|^{q-1}u \right) s^{N-1}ds}.
    \end{align}
     Since $u(0)=a>0$, then $u(r)$ is monotonically increasing in a small right neighborhood of 0. We claim that $u(r)$ is increasing on $(0,R_{max})$.
     Suppose the contrary, then there exists $r^*$ (the first maximum piont), such that $u(r)$ is increasing on $[0,r^*]$, and
     $$ u'(r^*)=0, \quad \left(\left| u^{\prime}\left( r^* \right) \right|^{p-2}u^{\prime}\left( r^* \right) \right) ^{\prime} \leq 0.$$
     But by \eqref{4-5}, we have
     $$ 0 \geq  B\left(\left| u^{\prime}\left( r^* \right) \right|^{p-2}u^{\prime}\left( r^* \right) \right) ^{\prime} = \frac{1}{m}+\chi \left| u \right|^{q-1}u > \frac{1}{m} >0.$$
     It is a contradiction. Therefore, $u(r)$ is increasing on $(0,R_{max})$.

 We assert that $R_{max}=\infty$. Suppose the contrary, $R_{max}<\infty$. Since $u(r)\geq a>0$ for $r \in (0, R_{max})$, then Lemma \ref{le3.1} also holds for $1 < p< 2$. By Lemma \ref{le3.1}, we attain that $\underset{r\rightarrow R_{max}^{-}}{\lim}u \left( r \right) = +\infty$
 since $u(r)$ is increasing on $(0,R_{max})$.
    Due to $q <0$ and \eqref{4-8}, we see that
    \begin{align*}
    	& Br^{N-1}\left| u^{\prime}\left( r \right) \right|^{p-2}u^{\prime}\left( r \right) \leq \int_0^r{( \frac{1}{m}+\chi a^{q}) s^{N-1}ds}\leq \frac{1}{N}( \frac{1}{m}+\chi a^{q}) r^{N}.
    \end{align*}
     That is,
     \begin{align*}
    	&  u^{\prime}(r)\leq (\frac{1}{mBN} + \frac{\chi}{BN} a^{q})^\frac{1}{p-1} r^{\frac{1}{p-1}}.
    \end{align*}
    Integrating this inequality from 0 to $r$ yields
    \begin{align*}
    	&  u(r)\leq a + \frac{p-1}{p}(\frac{1}{mBN} + \frac{\chi}{BN} a^{q})^\frac{1}{p-1} r^{\frac{p}{p-1}},
    \end{align*}
    which implies that $\underset{r \rightarrow R_{max}^{-}}{\lim}\,\,u \left( r \right) < +\infty.$ It is a contradiction. Therefore, $R_{max}=\infty.$

    Next, we will show that $\underset{r\rightarrow \infty}{\lim}\,\,u \left( r \right) =+\infty.$
    Recalling \eqref{4-8}, we see that
\begin{align*}
Br^{N-1}\left| u^{\prime}\left( r \right) \right|^{p-2}u^{\prime}\left( r \right) \geq \int_0^r{( \frac{1}{m}+\chi C_2^{q}) s^{N-1}ds}\geq \frac{1}{mN} r^{N}.
\end{align*}
That is,
\begin{align*}
u^{\prime}(r)\geq (\frac{1}{mBN})^\frac{1}{p-1} r^{\frac{1}{p-1}}.
\end{align*}
Integrating this inequality from 0 to $r$ yields
\begin{align}\label{as-1}
u(r)\geq a + \frac{p-1}{p}(\frac{1}{mBN})^\frac{1}{p-1} r^{\frac{p}{p-1}}\ge  \frac{p-1}{p}(\frac{1}{mBN})^\frac{1}{p-1} r^{\frac{p}{p-1}}
\end{align}
for any $r>0$,  which implies that $\underset{r\rightarrow \infty}{\lim}\,\,u \left( r \right) =+\infty.$
    Noticing that $q<0$, and substituting \eqref{as-1} into \eqref{4-8} yields
      \begin{align*}
    	& Br^{N-1}\left| u^{\prime}\left( r \right) \right|^{p-2}u^{\prime}\left( r \right) =\int_0^1 \left( \frac{1}{m}+\chi u^q \right) s^{N-1}ds
    +\int_1^r \left( \frac{1}{m}+\chi u^q \right) s^{N-1}ds
    \\
    \le &\left( \frac{1}{m}+\chi a^q \right)+ \int_1^r \left( \frac{1}{m} s^{N-1}+\chi \frac{(p-1)^q}{p^q}(\frac{1}{mBN})^\frac{q}{p-1} s^{\frac{pq}{p-1}+N-1}  \right)ds
    \\
    \le &\left( \frac{1}{m}+\chi a^q \right)+\frac{1}{mN} (r^{N}-1)+ \chi \frac{(p-1)^q}{p^q}(\frac{1}{mBN})^\frac{q}{p-1} \frac{1}{\frac{pq}{p-1}+N} \left(r^{\frac{pq}{p-1}+N}-1\right)
    \\
    \le & \left( \frac{1}{m}+\chi a^q \right)+\frac{1}{mN} r^{N}+ \chi \frac{(p-1)^q}{p^q}(\frac{1}{mBN})^\frac{q}{p-1} \frac{1}{\frac{pq}{p-1}+N} r^{\frac{pq}{p-1}+N}
    \end{align*}
    for any $r\ge 1$. It implies that
   \begin{align*}
  u^{\prime}
    \le & \left(\left( \frac{1}{mB}+\frac{\chi}{B} a^q \right)r^{1-N}+\frac{1}{mNB} r+ \chi \frac{(p-1)^q}{Bp^q}(\frac{1}{mBN})^\frac{q}{p-1} \frac{1}{\frac{pq}{p-1}+N} r^{\frac{pq}{p-1}+1}\right)^{\frac1{p-1}}.
    \end{align*}
    Integrating the above inequality from $1$ to $r$ gives
     \begin{align}\label{as-2}
  u(r)\le u(1)+\int_1^r \left(\left( \frac{1}{mB}+\frac{\chi}{B} a^q \right)s^{1-N}+\frac{1}{mNB} s+ \chi \frac{(p-1)^q}{Bp^q}(\frac{1}{mBN})^\frac{q}{p-1} \frac{1}{\frac{pq}{p-1}+N} s^{\frac{pq}{p-1}+1}\right)^{\frac1{p-1}}ds.
    \end{align}
    Combining \eqref{as-1} and \eqref{as-2}, applying ${\rm L^{\prime}H\hat opital^{\prime}s}$ rule yields
    $$
    \lim_{r\to\infty} \frac{u(r)}{r^{\frac{p}{p-1}}}=\frac{p-1}{p}(\frac{1}{mBN})^\frac{1}{p-1}.
    $$
    We complete the proof of this lemma.
\end{proof}

In what follows, we consider the case $p>2$. By \eqref{4-3} and $\frac{p-2}{p-1}>0$, we expect to find a global solution of the problem \eqref{4-6} such that $\underset{r\rightarrow \infty}{\lim}\,\,u \left( r \right) =0$ for which $\phi(r)$ is a global solution and $\underset{r\rightarrow \infty}{\lim}\,\,\phi \left( r \right) =0.$ Then there is no such solution. That is,
   \begin{lemma}\label{le4.3}
	 Assume that $p>2, q>p-1$. Let $u\left( r \right)$ be a classical solution of \eqref{4-6}, and $\left( 0, R_{max} \right)$ be the maximal existence interval of the solution. Then for any $a>0$, $R_{max}(a)=\infty$ and there exists a point $R_0(a)$ such that $u(r)$ is decreasing in $(0, R_0(a))$, and
	 $$\underset{r\rightarrow R_0^{-}}{\lim}\,\,u \left( r \right) =0, \quad \text{and} \quad \underset{r\rightarrow R_0^-}{\lim}\,\,u' \left( r \right) <0.$$
	 \end{lemma}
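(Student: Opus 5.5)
The plan is to follow the pattern of Lemma \ref{le4.1} and Lemma \ref{le4.2}, using the integrated form of \eqref{4-6}:
\begin{align*}
B r^{N-1}|u'(r)|^{p-2}u'(r) = -\int_0^r \Big(\chi |u|^{q-1}u + \frac{1}{m}\Big)s^{N-1}\,ds .
\end{align*}
The argument has four steps: monotonicity near the origin, reaching zero at a finite radius, the sign of $u'$ there, and global existence.

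\textbf{Monotonicity.} As long as $u\ge 0$, the right-hand side is at most $-\frac{1}{mN}r^N$. This gives
\[
u'(r)\le -\Big(\frac{r}{BNm}\Big)^{\frac{1}{p-1}}<0,
\]
so $u$ is strictly decreasing on any interval where it is nonnegative. In particular $u\le a$ there, so $u$ stays bounded and Lemma \ref{le3.1} (whose argument applies verbatim to \eqref{4-6}) excludes blow-up on that interval.

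\textbf{Reaching zero in finite $r$.} Integrating the bound on $u'$ gives
\[
u(r)\le a-\frac{p-1}{p}\Big(\frac{1}{BNm}\Big)^{\frac1{p-1}}r^{\frac{p}{p-1}} .
\]
Hence $u$ must hit zero at some finite point. Let $R_0(a)$ be the first zero. Then $u>0$ and $u'<0$ on $(0,R_0)$, and $u(R_0)=0$.

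\textbf{Sign of $u'$ at $R_0$.} Evaluating the integral identity at $R_0$ gives
\[
B R_0^{N-1}|u'(R_0)|^{p-2}u'(R_0)\le -\frac{1}{mN}R_0^N<0,
\]
so $\lim_{r\to R_0^-}u'(r)=u'(R_0)<0$. Continuity of $u'$ up to $R_0$ comes from the $C^1$ local solution of Lemma \ref{le3.1}.

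\textbf{Global existence, $R_{max}=\infty$.} This is the only place where some care is needed, because past $R_0$ the solution becomes negative and the nonlinearity $\chi|u|^{q-1}u$ is then a source pushing $u$ further down. I would use the energy
\[
E=\frac{B(p-1)}{p}|u'|^p+\frac{\chi}{q+1}|u|^{q+1}+\frac1m u,
\]
which satisfies $E'=-\frac{B(N-1)}{r}|u'|^p\le 0$ exactly as in \eqref{eq3-2}. Since $q+1>1$, the function $\frac{\chi}{q+1}|u|^{q+1}+\frac{u}{m}$ is bounded below and tends to $+\infty$ as $|u|\to\infty$. Boundedness of $E$ from above therefore bounds $|u|$, and by Lemma \ref{le3.1} no blow-up can occur, so $R_{max}=\infty$.

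\textbf{Main obstacle.} I do not expect a serious difficulty, since the $\frac1m$ term has a fixed sign and dominates. The point needing attention is the justification of $C^1$ regularity at $R_0$ despite the degenerate $p$-Laplacian. This is harmless because $u'(R_0)\ne 0$ by the strict negative bound above, so the equation is nondegenerate near $R_0$.
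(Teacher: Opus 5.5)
Your proposal is correct and follows essentially the same route as the paper. The decreasing energy $E=\frac{B(p-1)}{p}|u'|^p+\frac{\chi}{q+1}|u|^{q+1}+\frac{u}{m}$ bounds $|u|$ and gives $R_{max}=\infty$, and the integrated form of \eqref{4-6} yields $u'\le -\big(\frac{r}{BNm}\big)^{\frac{1}{p-1}}$ while $u\ge 0$, forcing a first zero $R_0$ with $u'(R_0)<0$. (Your intermediate bound $-\frac{1}{mN}r^N$ is the right one; the paper's displayed $-\frac{1}{BmN}r^N$ has a stray factor $B$, though its resulting inequality \eqref{4-10} agrees with yours.)
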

\begin{proof}

First, we will show that $R_{max}=\infty$.  Similar to Lemma \ref{le3.1}, we only need to establish the boundedness of $u(r)$. Denote the energy functional
\begin{align}\label{4-9}
		E\left( r, a \right): =\frac{B\left( p-1 \right)}{p}\left| u^{\prime}\left( r,a \right) \right|^p+ \frac{\chi}{q+1}|u(r,a) |^{q+1}+\frac{1}{m}u(r,a),
		\end{align}
with
$$
E(0,a)= \frac{\chi}{q+1}a^{q+1}+\frac{a}{m}.
$$
From \eqref{4-9}, a direct calculation  yields
		$$\frac{dE}{dr}\left( r,a \right) =-\frac{B\left( N-1 \right)}{r}\left| u^{\prime}\left( r,a \right) \right|^p\leq 0,$$
then $E(r,a)$ is decreasing. By \eqref{4-9}, we see that
 $$\frac{\chi}{q+1}|u |^{q+1}+\frac{1}{m}u \leq E(0,a),$$
 which implies that $u(r,a)$ is bounded since $q>p-1$. Then $R_{max}=\infty$.

Next, we show that $u(r)$ reaches 0 at a finite point $R_0$.
From \eqref{4-6}, it is easy to see that
     \begin{align*}
     	& \left( Br^{N-1}\left| u^{\prime}\left( r \right) \right|^{p-2}u^{\prime}\left( r \right) \right) ^{\prime}=-\left( \frac{1}{m}+\chi \left| u \right|^{q-1}u \right) r^{N-1}.
     \end{align*}
     Integrating this equality from 0 to $r$ yields
     $$
 Br^{N-1}\left| u^{\prime}\left( r \right) \right|^{p-2}u^{\prime}\left( r \right) =-\int_0^r{\left( \frac{1}{m}+\chi \left| u \right|^{q-1}u \right) s^{N-1}ds}.
$$
It implies that $u(r)$ is decreasing when $u\ge 0$, hence,
$$
 Br^{N-1}\left| u^{\prime}\left( r \right) \right|^{p-2}u^{\prime}\left( r \right)\le -  \frac{1}{BmN}r^N,
$$
or equivalently,
 \begin{align}\label{4-10}
     	u^{\prime}\le -  (BmN)^{-\frac{1}{p-1}}r^{\frac{1}{p-1}}.
 \end{align}
Consequently,
\begin{align*}
     	u(r)\le a -  (BmN)^{-\frac{1}{p-1}}\int_0^rs^{\frac{1}{p-1}}ds=a -  \frac{p-1}p(BmN)^{-\frac{1}{p-1}}r^{\frac{p}{p-1}},
\end{align*}
which shows there exists a point $R_0$ such that
$u(R_0)=0$. From \eqref{4-10}, it follows immediately that $u'(R_0)<0$. The proof is complete.
\end{proof}

\medskip

{\it\bf Proof of Theorem \ref{th-4}.}  By \eqref{4-1} and Lemma \ref{le4.1}, we see that when $p=2$, $\phi\left( r \right)$ is decreasing and fulfills $\underset{r\rightarrow \infty}{\lim}\,\,\phi\left( r \right) =0.$ From \eqref{4-3} and Lemma \ref{le4.2}, we deduce that when $1<p<2$, $\phi\left( r \right)$ is also decreasing and satisfies $\underset{r\rightarrow \infty}{\lim}\,\,\phi\left( r \right) =0.$
From Lemma \ref{le4.3} and \eqref{4-3}, when $p>2$, there exist a point $R_0$ such that $\underset{r\rightarrow R_0}{\lim}\,\,\phi\left( r \right) =0$ and
$$\lim_{r\to R_0^-}\phi^{'}(r)=\lim_{r\to R_0^-}\frac{p-1}{p-2}u^{\frac{1}{p-2}}u'(r) =0.$$

With these, the proof of Theorem \ref{th-4} is complete. \hfill $\Box$

  \medskip

Next, we will further derive the decay rate at infinity for $1<p\leq 2$.

\begin{lemma}\label{le4.4} Assume that $1<p\le 2$.
Let $\phi\left( r \right)\in C^1[0,+\infty)$ be the global solution of \eqref{2-11}. If $1<p<2$ and $q<0$, then
		\begin{align*}
		\lim_{r\to\infty}\phi(r)r^{\frac{p}{2-p}} = K^{\frac{p-1}{p-2}},
		\end{align*}
		where $K=({\frac{1}{BNm}})^{\frac{1}{p-1}}\frac{p-1}{p}.$
		If $p=2$, then
		\begin{align*}
		\lim_{r\to\infty} \frac{\ln \phi(r)}{r^2} =-\frac14.
		\end{align*}
	\end{lemma}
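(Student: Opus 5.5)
The plan is to read both asymptotic statements off the transformed profiles $u$, for which the key information is already available: Lemma \ref{le4.2} for $1<p<2$, and the integral identity \eqref{4-7} together with Lemma \ref{le4.1} for $p=2$.

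\emph{Case $1<p<2$.} By \eqref{4-3} we have $\phi=u^{\frac{p-1}{p-2}}$, and Lemma \ref{le4.2} gives $u(r)>0$, $u(r)\to+\infty$, and
$$u(r)=K\,r^{\frac{p}{p-1}}\bigl(1+o(1)\bigr),\qquad K=\Bigl(\frac{1}{BNm}\Bigr)^{\frac{1}{p-1}}\frac{p-1}{p}.$$
Raising this to the fixed power $\frac{p-1}{p-2}$ and using continuity of $s\mapsto s^{\frac{p-1}{p-2}}$ at $s=1$ yields
$$\phi(r)=K^{\frac{p-1}{p-2}}\,r^{\frac{p}{p-2}}\bigl(1+o(1)\bigr).$$
Since $\frac{p}{p-2}=-\frac{p}{2-p}$, multiplying by $r^{\frac{p}{2-p}}$ gives the claimed limit.

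\emph{Case $p=2$.} Here $u=\ln\phi$ and $m=\frac2N$, so $mN=2$.
\begin{enumerate}
\item[(a)] From \eqref{4-7},
$$\frac{u'(r)}{r}=-\frac{1}{mN}-\frac{\chi}{r^{N}}\int_0^r e^{mu(s)}s^{N-1}\,ds .$$
\item[(b)] By Lemma \ref{le4.1}, $u$ is decreasing with $u(r)\to-\infty$, so $e^{mu(s)}\to0$. Given $\varepsilon>0$, choose $r_\varepsilon$ such that $e^{mu}<\varepsilon$ for $s>r_\varepsilon$. Splitting the integral at $r_\varepsilon$ gives
$$\frac{1}{r^{N}}\int_0^r e^{mu}s^{N-1}\,ds\le \frac{e^{mb}r_\varepsilon^{N}}{N r^{N}}+\frac{\varepsilon}{N},$$
so this term tends to $0$ (a Cesàro-type argument).
\item[(c)] Hence $u'(r)/r\to-\frac{1}{mN}=-\frac12$.
\item[(d)] Since $r^2\to\infty$, L'H\^opital's rule applied to $\ln\phi(r)/r^2=u(r)/r^2$ gives the limit $\lim u'(r)/(2r)=-\frac14$.
\end{enumerate}

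The only mildly delicate point is step (b), where we need the vanishing of the averaged nonlinear term rather than merely the pointwise decay $e^{mu}\to0$; the splitting estimate handles it. In the case $1<p<2$ the argument is immediate once Lemma \ref{le4.2} is invoked, since the exponent $\frac{p-1}{p-2}$ is fixed and negative and the limit $K>0$.
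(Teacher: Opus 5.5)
Your proposal is correct and follows essentially the same route as the paper. For $1<p<2$ you transfer the asymptotics $u(r)\sim Kr^{\frac{p}{p-1}}$ of Lemma \ref{le4.2} through $\phi=u^{\frac{p-1}{p-2}}$, and for $p=2$ you use \eqref{4-7} to get $u'(r)\sim -\frac{r}{mN}$ and integrate with $mN=2$; your splitting argument in step (b) and L'H\^opital's rule in step (d) just supply the details behind the paper's claims $H(r)\sim\frac{1}{mN}r^{N}$ and ``by a direct calculation, $u(r)\sim-\frac{r^2}{2Nm}$''.
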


	\begin{proof}
When $1<p<2$, using Lemma \ref{le4.2}, we see that
	 $$u(r) \sim Kr^{\frac{p}{p-1}} \quad as \quad  r\rightarrow \infty,$$
where $K=({\frac{1}{BNm}})^{\frac{1}{p-1}}\frac{p-1}{p}.$ By \eqref{4-3}, we can see
	$$\phi(r) \sim K^{\frac{p-1}{p-2}}r^{\frac{p}{p-2}} \quad as \quad  r\rightarrow \infty.$$

	When $p=2$, denote
	$$H(r)=\int_0^{ r }{(\chi e^{um}+\frac{1}{m})s^{N-1}ds}.$$
	Noticing that $u(r)\to-\infty$ as $r\to\infty$, therefore
	$H(r) \sim \frac{1}{mN}r^{N}$  as $ r\rightarrow \infty$.
	By \eqref{4-7}, it is easy to see that $u'(r)=-\frac{H(r)}{r^{N-1}}$, then $u'(r) \sim -{\frac{r}{Nm}}$ as $ r\rightarrow \infty$. By a direct calculation, it is not difficult to derive that
	$$u(r) \sim -\frac{r^{2}}{2Nm} \quad as \quad  r\rightarrow \infty.$$
	Noticing that $mN=2$ for $p=2$, we complete the proof.
	\end{proof}

\medskip
According to the result of Lemma \ref{le4.4}, Theorem \ref{th-5} is obtained directly.
\medskip

\textbf{ \textit{Proof of Theorem \ref{th-6}.}}
  Recall that the $(\rho, c)$ is the forward self -similar solution of \eqref{1-5}. That is,
  $$
		\rho \left( x,t \right) =t  ^{-\frac{1}{m}}\phi \left( t  ^{-\frac{1}{mN}}\left| x \right| \right) ,\quad c\left( x,t \right) = t  ^{\frac{2-mN}{mN}}\psi \left( t  ^{-\frac{1}{mN}}\left| x \right| \right).
		$$
When $1<p\le 2$, we show that $$ \rho(x,t) \rightarrow M\delta(x), \quad t \rightarrow 0^{+}$$ 		in the sense of distribution.
For any $f(x) \in C_0(R^N)$, we have
			$$
			\int_{R^N}{\rho(x,t)f(x)}dx=\int_{R^N}{t  ^{-\frac{1}{m}}\phi \left( t  ^{-\frac{1}{mN}}\left| x \right| \right)f(x)}dx=\int_{R^{N}}{\phi(\vert y\vert)f(t^{\frac{1}{Nm}}y)dy} .
			$$
			Then
			\begin{align}\label{4-12}
			&\Big| \int_{R^{N}}{\rho (x,t)f(x)dx} -Mf(0) \Big|=\Big| \int_{R^{N}}{\phi(\vert y\vert)(f(t^{\frac{1}{Nm}}y)-f(0))dy} \Big| \notag
			\\
			&\leq \Big| \int_{\vert y \vert \leqslant R}{\phi(\vert y\vert)(f(t^{\frac{1}{Nm}}y)-f(0))dy} \Big| + \Big| \int_{\vert y \vert > R}{\phi(\vert y\vert)(f(t^{\frac{1}{Nm}}y)-f(0))dy} \Big| \notag
			\\
			&\leq M\sup_{\vert y \vert \leqslant R}\Big| f(t^{\frac{1}{Nm}}y)-f(0) \Big|+ 2\vert\vert f \vert\vert_{L^\infty}  \Big| \int_{\vert y \vert > R}{\phi(\vert y\vert)dy} \Big|.
		\end{align}
			Since $M<\infty$, for any given $\varepsilon$ and there exists a sufficiently large constant $R_0>0$, such that
			\begin{align}\label{4-13}
				& |\partial B_1|\int_{R_0}^{\infty}{\phi(r)}r^{N-1}dr < \frac{\varepsilon}{4\vert\vert f \vert\vert_{L^\infty}}.
			\end{align}
			For the above established $R_0$, there exists $t_0>0$ such that when $t < t_0$,
		\begin{align}\label{4-14}
			&\sup_{\vert y \vert \leqslant R_0}\vert f(t^{\frac{1}{Nm}}y) -f(0)\vert <\frac{\varepsilon}{2M}.
		\end{align}
	 Combining  \eqref{4-12}, \eqref{4-13} and \eqref{4-14}, we obtain that for any $\varepsilon$, there exists $t_0>0$, such that when $t<t_0$,
	 \begin{align*}
	 	\Big| \int_{R^{N}}{\rho (x,t)f(x)dx} -Mf(0) \Big| < \varepsilon.
	 \end{align*}
		It implies that $$\rho \left( x,t \right) \rightarrow M \delta \left( x \right) , \quad as \quad \ t \rightarrow 0^+.$$
When $p>2$, the proof is similar to above, we omit it.
In particular,  when $p>2$, we observe that the function $\phi(r)$ is compactly supported, with its support given by
$$|x|<t^{\frac1{mN}} R_0.$$
This completes the proof of the theorem. \hfill $\Box$

\section{A brief discussion on the sign structure of the chemical concentration in parabolic-elliptic Keller-Segel model}

The parabolic-elliptic Keller-Segel model describes chemotactic aggregation through the coupled system
\begin{align*}\left\{\begin{aligned}
&\rho_t - \Delta_p \rho = -\chi\nabla \cdot (\rho \nabla c),
\\
& -\Delta c = \rho^m,\end{aligned}\right.
\end{align*}
where the chemical concentration $c$ is determined by the fundamental solution of the Poisson equation. The sign structure of $c$ exhibits a remarkable dimensional dependence that profoundly influences the aggregation dynamics.  We note that in one- and two-dimensional spatial settings, the variable $c$ may take negative values. If $c$ is interpreted as a gravitational potential in  a model describing the gravitational interaction between particles \cite{BH, Wo}, negative values are permissible. However, if $c$ represents the concentration of a chemical substance, a negative concentration appears to lack physical meaning. In this context, on the one hand,  $c$  can be understood as a deviation from a certain reference concentration \cite{13}; on the other hand, we attempt to provide an alternative interpretation: the model may actually describe a chemotactic mechanism that potentially incorporates both attractive and repulsive effects simultaneously, as detailed below.

In dimensions $N \geq 3$, the solution of the Poisson equation is given by
$$
c(x) = \frac{1}{(N-2)\omega_N} \int_{\mathbb{R}^N} \frac{1}{|x-y|^{N-2}} \rho^m(y) dy,
$$
where $\omega_N$ is the surface area of the unit sphere in $\mathbb{R}^N$. Since the kernel is positive, the chemical concentration $c$ is always non-negative, leading to purely attractive interactions that promote cell aggregation. This positive-definite nature of the kernel in higher dimensions is a key factor contributing to the possibility of finite-time blow-up in three-dimensional and higher-dimensional Keller–Segel systems.

In two dimensions, the fundamental solution takes the logarithmic form, then
$$
c(x,t) = -\frac{1}{2\pi} \int_{\mathbb{R}^2} \ln |x-y|\,\rho^m(y,t)\,dy.
$$
The logarithmic kernel introduces a sign-changing property: for $|x-y|<1$, $\ln|x-y| < 0$, making the integrand positive and contributing to a local increase in $c$. This corresponds to the release of chemoattractant signals by cells detecting favorable conditions in their immediate vicinity, promoting aggregation. Conversely, for $|x-y|>1$, the integrand becomes negative, contributing to a decrease in $c$. This can be interpreted as a repulsive signal that discourages cell aggregation in resource-poor regions. The sign structure of $c$ thus provides a spatially modulated communication mechanism: positive signals attract cells toward resource-rich zones, while negative signals suppress migration toward unfavorable areas, enabling self-organized collective behavior.

In one dimension, the chemical concentration is given by
$$
c(x,t) = -\frac{1}{2} \int_{\mathbb{R}} |x-y|\,\rho^m(y,t)\,dy.
$$
Unlike the two-dimensional case, the absolute value function $|x-y|$ is always non-negative, and the presence of the negative sign ensures that $c(x,t) \leq 0$ for all $x$ and $t$. This implies that the chemical signal is always repulsive in nature. One can interpret this by defining $h = -c \geq 0$ as a repulsive sign concentration, where cells tend to move away from regions of high $h$, i.e., from high $h$ regions (far from the center) toward low $h$ regions (the center). For example, under radial symmetry, the Poisson equation reduces to
$$
h_{rr} = \rho^m(r),
$$
which, after integration with the symmetry condition $h_r(0)=0$, yields
$h_r = \int_0^r \rho^m(s)\,ds \geq 0$, so $h$ is lowest at the center and grows outward.

In summary, for one-dimensional, two-dimensional, and higher-dimensional cases, although the sign of $c$ may vary across dimensions, the direction of its gradient $\nabla c$ remains unchanged and
 always points toward the signal source (i.e., in the direction of $y$). Specifically, $\nabla c$ can be uniformly expressed in the following form,
$$
\nabla c(x,t)=-\frac{1}{\omega_N}\int_{\mathbb R^N}\frac{(x-y)}{|x-y|^N}\rho^m(y)dy.
$$
The transition from purely attractive interactions in dimensions $n \geq 3$ to sign-changing behavior in two dimensions and purely repulsive interactions in one dimension  underscores the critical role of spatial dimension and kernel structure in chemotactic systems. The one-dimensional case, in particular, reveals that a signal that is repulsive in the sense of $h = -c$ can still produce center-ward aggregation, with a drift strength that grows with distance from the center.
	
\section*{Declarations}
The authors  declare  that there is no competing interest. Data sharing is not applicable to this article as no new data were created or analyzed in this study.

\section*{Acknowledgments}
  This work is supported by NSFC(12271186).

\end{document}